    \newcommand\ba{\begin{align*}}
    \newcommand\ea{\end{align*}}
    \newcommand\be{\begin{enumerate}}
    \newcommand\ee{\end{enumerate}}
    \newcommand\bpf{\begin{proof}}
    \newcommand\epf{\end{proof}}
    \newcommand\bpp{\begin{prop}}
    \newcommand\epp{\end{prop}}
    \newcommand\bpb{\begin{prob}}
    \newcommand\epb{\end{prob}}
    \newcommand\bd{\begin{defn}}
    \newcommand\ed{\end{defn}}
    \newcommand\bh{\begin{hint}}
    \newcommand\eh{\end{hint}}
    \newcommand\N{\mathbb{N}}
    \newcommand\R{\mathbb{R}}
    \newcommand\Q{\mathbb{Q}}
    \newcommand\bZ{\mathbb{Z}}
    \newcommand\Z{\mathbb{Z}}
    \newcommand\col{\mathrm{Col}}
    \DeclareMathOperator\Homeo{Homeo}
    \newcommand\mL{\mathcal{L}}
    \DeclareMathOperator\tp{tp}
    \DeclareMathOperator\ro{RO}
    \DeclareMathOperator\rc{RC}
    \DeclareMathOperator\Th{{Th}}
    \DeclareMathOperator\For{FOR}
\newcommand{\mgd}[2]{\{  {#1}\;|\; {#2} \} }
\newcommand{\Mgd}[2]{\big\{  {#1}\;\big|\; {#2} \big\} }
\newcommand{\MGD}[2]{\Big\{  {#1}\;\Big|\; {#2} \Big\} }
\newcommand{\BMGD}[2]{\Bigg\{  {#1}\;\Bigg|\; {#2} \Bigg\} }
\def\thetitle{Set theory, logic, and homeomorphism groups of manifolds}
     \theoremstyle{plain}
    \newtheorem{thm}{Theorem}[section]
    \newtheorem{lem}[thm]{Lemma}
    \newtheorem{lemma}[thm]{Lemma}
    \newtheorem{cor}[thm]{Corollary}
    \newtheorem{prop}[thm]{Proposition}
    \newtheorem*{claim*}{Claim}
    \theoremstyle{remark}
    \theoremstyle{definition}
    \newtheorem{defn}[thm]{Definition}
    \newtheorem{prob}{Problem}[section]
\begin{document}
\title\thetitle
    \date{\today}

    \keywords{Topological manifold; first order rigidity; homeomorphism group;
    independence from ZFC}
    \subjclass[2020]{Primary:  20A15, 57S05; Secondary: 03C07, 03C75, 03E15, 03E35, 03E45, 03E60}

        \author[J. Hanson]{James E. Hanson}
    \address{Department of Mathematics, Iowa State University, Ames,
    IA 50011}
    \email{jameseh@iastate.edu}
    \urladdr{https://james-hanson.github.io/}

    \author[T. Koberda]{Thomas Koberda}
    \address{Department of Mathematics, University of Virginia, Charlottesville, VA 22904-4137, USA}
    \email{thomas.koberda@gmail.com}
    \urladdr{https://sites.google.com/view/koberdat}
    
    \author[J. de la Nuez Gonz\'alez]{J. de la Nuez Gonz\'alez}
    \address{School of Mathematics, Korea Institute for Advanced Study (KIAS), Seoul, 02455, Korea}
    \email{jnuezgonzalez@gmail.com}

    \author[C. Rosendal]{Christian Rosendal}
    \address{Department of Mathematics, University of Maryland,
    College Park, MD 20742}
    \email{rosendal@umd.edu}
    \urladdr{https://sites.google.com/view/christian-rosendal}

\begin{abstract}
    We investigate the relationship between axiomatic set theory and the first-order theory of homeomorphism groups of manifolds in the language of group theory, concentrating on first-order rigidity and type versus conjugacy. We prove that under the axiom of constructibility (i.e.~{V=L}), homeomorphism groups of arbitrary connected manifolds are first-order rigid, and that the conjugacy class of a homeomorphism of a manifold is determined by its type. In contradistinction, under the regularity hypothesis that every projective set of reals has the Baire property, we show that in all dimensions greater than one there exist pairs of noncompact, connected manifolds whose homeomorphism groups are elementarily equivalent but which are not homeomorphic. We also show, under the same Baire-property hypothesis, that every manifold of positive dimension admits pairs of homeomorphisms with the same type which are not conjugate to each other. Projective determinacy implies the Baire-property hypothesis, so the corresponding consequences under PD follow immediately. Finally, we show that infinitary formulas do determine conjugacy classes of homeomorphisms and homeomorphism types of manifolds; specifically, the conjugacy class of a homeomorphism of an arbitrary manifold is determined by a single $L_{\omega_1\omega}$ formula. Similarly, the homeomorphism type of an arbitrary connected manifold is determined by a single $L_{\omega_1\omega}$ sentence.
\end{abstract}

 \maketitle
 \setcounter{tocdepth}{1}
\tableofcontents

\section{Introduction}

In this paper, we investigate the first-order theory of $\Homeo(M)$, the homeomorphism
group of an arbitrary
connected manifold $M$, and its relation to underlying axioms
in set theory. In~\cite{Rubin1989}, M.~Rubin conjectured that
under the set theoretic assumption of G\"odel constructibility,
if $M$ and $N$ are arbitrary, connected, boundaryless manifolds
with elementarily equivalent homeomorphism groups, then $M$ and $N$
are homeomorphic. In this paper, we
completely resolve Rubin's conjecture and furthermore show that set theoretic
axioms beyond ZFC are necessary.

Throughout this paper, $M$ will denote an arbitrary (not
necessarily compact) connected manifold of positive dimension and
with empty boundary; we will
assume that $M$ is second countable and Hausdorff. We write 
$\Homeo(M)$ for the group of all homeomorphisms of $M$, which will be viewed as a first-order structure in the  language $\mL$ of group theory. We adopt
the convention that this language has exactly two nonlogical symbols,
namely the binary multiplication operation and the identity. 

The \emph{theory} of $\Homeo(M)$, denoted $\Th(\Homeo(M))$, consists of all first-order $\mL$-sentences $\psi$ such that $\Homeo(M)\models\psi$. We will say
that $\Homeo(M)$ is \emph{first-order rigid} if for all manifolds $N$,
we have that $M$ is homeomorphic to $N$ if and only if $\Th(\Homeo(M))=\Th(
\Homeo(N))$. When the theories agree,
we also write $\Homeo(M)\equiv \Homeo(N)$, and say
that these two groups are \emph{elementarily equivalent}.
The first-order theory of homeomorphism groups of compact manifolds
was investigated in~\cite{KKdlN22,KKdlN2025,KdlN2023,KdlN2024}.

Similarly, if $g\in\Homeo(M)$, the \emph{type} of $g$, written $\tp(g)$,
consists of all first-order formulae $\psi(v)$  in one variable $v$
such that $\Homeo(M)\models\psi(g)$.

In this paper, we will compare two incompatible set-theoretic hypotheses. The first is G\"odel constructibility, denoted {V=L}; see~\cite{hinman-book,devlin-book} for background and discussion. Roughly, {V=L} is a restriction on the scope of the power set operation, and asserts that the only sets which exist at a particular level $V_{\alpha+1}$ of Zermelo's cumulative hierarchy are ones which are definable in the language of set theory, with parameters in the preceding level $V_\alpha$. Under {V=L}, the Axiom of Choice and the Continuum Hypothesis are both theorems. Importantly for us, under {V=L}, there is a projectively definable well-ordering of $\R$, or more generally of a definably presented Polish space.

Recall that a subset $A$ of a topological space $X$ has the \emph{Baire property} if it differs (by set-theoretic difference) from an open set by a \emph{meager set}, i.e.~ one contained in a countable union of nowhere dense closed sets.
The second hypothesis used in this paper is the following projective regularity assertion:
\[
\mathsf{PBP}:\quad\text{every projective set of reals has the Baire property.}
\]
Equivalently for the applications below, every projective subset of the standard projective code spaces used in this paper has the Baire property. The better known regularity axiom \emph{projective determinacy} (PD) implies $\mathsf{PBP}$; however, the proofs of non-classification below use only $\mathsf{PBP}$ and the associated category zero-one laws, not determinacy itself.

Our first results relate first-order rigidity of homeomorphism groups
of noncompact manifolds to extensions of ZFC:

\begin{thm}\label{thm:vl-rigid}
    Suppose {V=L} holds.
    Then homeomorphism groups of manifolds are first-order
    rigid; that is, for all pairs $M,N$ of connected manifolds without
    boundary, we have $\Homeo(M)\equiv \Homeo(N)$ if and only if $M$ and
    $N$ are homeomorphic.
\end{thm}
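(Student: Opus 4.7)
The forward direction is immediate, since a homeomorphism $M \to N$ induces a group isomorphism $\Homeo(M)\to \Homeo(N)$, and isomorphic groups share their first-order theory. The whole content of the theorem is the converse, which I would approach by combining the unconditional $L_{\omega_1\omega}$-rigidity result advertised in the abstract with the extra definability that V=L provides. Concretely, for each connected manifold $M$ the plan is to exhibit a single first-order $\mL$-sentence $\tau_M$ such that $\Homeo(N)\models \tau_M$ if and only if $N\cong M$; then $\Homeo(M)\equiv \Homeo(N)$ forces $N\cong M$ as desired.

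The technical input is a reconstruction/interpretation of the manifold $M$ inside $\Homeo(M)$. Using the Whittaker--Rubin style results developed in~\cite{KKdlN22,KKdlN2025,KdlN2023,KdlN2024}, one should be able to interpret in $\Homeo(M)$ (possibly with parameters) structures such as the lattice of regular open sets, or a countable network from which $M$ can be reconstructed up to homeomorphism. Unconditionally, such reconstructions yield only an $L_{\omega_1\omega}$-sentence $\sigma_M$ characterizing $M$, because the choices involved (selecting a countable dense set of commuting germ-supported homeomorphisms, picking a countable family of balls forming a basis, etc.) are coded by real parameters rather than by first-order formulas. Under V=L the idea is to eliminate these parameters by appealing to the $\Sigma^1_2$-definable well-ordering of $\R$, and hence of any analytically presented Polish space, including $\Homeo(M)$ itself. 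This allows one to speak, in a projective and therefore first-order (after interpretation) manner, of the $<_L$-least witness of any projectively definable property, thereby pinning down the required parameters canonically. In more detail, one would show that under V=L the countable joins and meets appearing in $\sigma_M$ can each be replaced by a first-order formula asserting the existence of an element which is $<_L$-least with some definable property, and that these can be absorbed into a single first-order $\tau_M$ over $\Homeo(M)$. This transfer works because $<_L$ is absolute between $\Homeo(M)$ and $\Homeo(N)$ whenever both lie in $L$, which under V=L is automatic.

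The main obstacle, and the place where the V=L assumption is really being exploited rather than window-dressing, is in step two: pushing a set-theoretic notion of canonicity (the constructibility order) into the purely group-theoretic first-order language $\mL$ of $\Homeo(M)$. The crux is to show that the projective definability of $<_L$ descends to first-order definability in $\Homeo(M)$ via a suitable interpretation of a countable dense skeleton of the group, for instance a sufficiently rich finitely generated or compactly supported definable subgroup whose action encodes arithmetic and whose orbit structure encodes the topology of $M$. Once such an interpretation is in place, elementary equivalence $\Homeo(M)\equiv \Homeo(N)$ yields elementary equivalence of the interpreted structures, and the $L_{\omega_1\omega}$-rigidity statement for manifolds then upgrades to full first-order rigidity. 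The contrapositive failure of absoluteness under PD (the negative results later in the paper) is a useful sanity check: any proof of this theorem must make essential use of a feature of V=L that fails under PD, and the projective well-ordering is the natural candidate.
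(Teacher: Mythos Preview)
Your high-level strategy is correct and matches the paper: interpret enough set theory inside $\Homeo(M)$ that, under V=L, a canonical ``code'' for $M$ becomes parameter-free first-order definable, and then observe that elementary equivalence forces the codes to agree. The paper implements this concretely by coding $M$ as an element of the analytically presented Polish space $C(B,E)^\Z$ (an atlas of charts for a Whitney embedding, produced via a sufficiently transitive homeomorphism and a parametrized ball), taking the $<_L$-least such code, and noting that the sentences $\chi_i$ saying ``the minimal code lies in the $i^{th}$ basic open set'' jointly pin it down.

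Where your proposal goes astray is in what you flag as the ``main obstacle''. You write that the crux is to push the projective definability of $<_L$ down into the group-theoretic language, and you speculate about needing a finitely generated subgroup encoding arithmetic. This is not where the work lies: the cited results already give a \emph{uniform, parameter-free} interpretation of full second-order arithmetic $(\N,2^\N,+,\times,<,\in)$ in $\Homeo(M)$. Once that is in hand, \emph{any} analytical (parameter-free projective) relation on an analytically presented Polish space---in particular the V=L well-ordering---is automatically first-order $\mL$-definable in $\Homeo(M)$, with no further absoluteness argument required. The actual work is elsewhere: building the $\mL$-definable set $\mathcal M(M)\subseteq C(B,E)^\Z$ of codes, which requires interpreting parametrized balls, sufficiently transitive homeomorphisms, and regular compact sets in the possibly noncompact $M$, and then verifying (via a point-set lemma, Proposition~\ref{prop:top}) that the defining conditions really force the closure of the coded atlas to be homeomorphic to $M$.

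A minor point: the paper does not literally produce one sentence $\tau_M$, but rather observes that $G_M\equiv G_N$ forces agreement on every $\chi_i$, which already determines the minimal code uniquely. Your detour through the $L_{\omega_1\omega}$ result is also slightly backwards: in the paper, both the V=L and the $L_{\omega_1\omega}$ arguments rest on the same coding machinery, and differ only in how a single code is isolated (least element versus infinite conjunction).
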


By contrast:

\begin{thm}\label{thm:pd-notrigid}
    Suppose $\mathsf{PBP}$ holds. Then for all $n\geqslant 2$, there exist
    manifolds $M$ and $N$ of dimension $n$ that are not
    homeomorphic, but such that
    $\Homeo(M)\equiv \Homeo(N)$.
\end{thm}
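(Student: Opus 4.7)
The plan is to contrast PD with the projective well-ordering of $\R$ used in the V=L half of the paper: in dimension $n\geqslant 2$, build a projectively definable family of noncompact connected $n$-manifolds containing perfectly many homeomorphism types, and then use the regularity consequences of PD (Lebesgue measurability, Baire property of projective sets, non-existence of projective well-orderings of non-null projective sets) to show that this family cannot inject into the space of complete first-order theories of groups.

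For the construction step, I would attach decorations at infinity to $\R^n$ in a Borel-controlled manner (e.g.\ handles, punctures, or end-attachments indexed by the digits of $x\in 2^{\N}$), yielding a Borel map $x\mapsto M_x$ such that on some perfect set $P\subseteq 2^{\N}$ distinct parameters produce non-homeomorphic manifolds. This rests on the classical fact that in dimension $n\geqslant 2$ there are perfectly many homeomorphism types of noncompact connected $n$-manifolds already distinguishable by their spaces of ends. Each $\Homeo(M_x)$, together with its group operation, then admits a projective presentation from $x$ at a fixed level of the projective hierarchy.

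Next I would establish that the theory map $F\colon x\mapsto \Th(\Homeo(M_x))$ is projectively definable: for each $\mL$-sentence $\psi$, the set $\{x:\Homeo(M_x)\models\psi\}$ is projective via Tarski's inductive truth definition applied to the projective presentation of $\Homeo(M_x)$. Consequently the equivalence relation
\[
x\equiv_{\Th} y \iff F(x)=F(y)
\]
is a projective equivalence relation on $2^{\N}\times 2^{\N}$.

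The contradiction step is the crux. Suppose, toward a contradiction, that $F$ is injective on $P$. Then $\equiv_{\Th}$ restricted to $P$ is the identity, giving a projective injection from a perfect set of reals into $2^{\omega}$. Pulling back the natural ordering on the image via this projective injection, one obtains a projective well-ordering of a non-null projective subset of $2^{\N}$. This contradicts the standard consequence of PD that every projective subset of $2^{\N}$ is Lebesgue measurable and has the Baire property, since a non-null projective set carrying a projective well-ordering can be shown (by the usual Fubini/Kuratowski--Ulam argument applied to its graph) to violate measurability. Hence $F$ must fail to be injective on $P$, yielding distinct $x,y\in P$ with $M_x\not\cong M_y$ but $\Homeo(M_x)\equiv\Homeo(M_y)$, as required.

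The main obstacle will be the final step: pinning down the precise projective-regularity statement that the assumed injectivity of $F$ violates, and arranging the family $(M_x)$ so that the descriptive set-theoretic reduction goes through cleanly. The construction and the projectivity of truth are essentially soft---the former uses standard end-space techniques in dimension $\geqslant 2$, the latter is a routine syntactic unwinding---but matching the complexity of the projective theory map to the exact regularity enforced by PD is where the real work lies.
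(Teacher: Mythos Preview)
Your contradiction step contains a genuine gap. A projective injection $F\colon P\to 2^{\omega}$ from a perfect set does not produce a projective well-ordering of $P$: there is no ``natural ordering on the image'' to pull back, since subsets of $2^{\omega}$ carry no canonical well-order (the lexicographic order is linear but not well-founded). The identity map on $2^{\omega}$ is already such an injection and contradicts nothing. So even granting your construction and the projectivity of the truth predicate, injectivity of the theory map on $P$ is entirely consistent with PD; your Fubini argument never gets started because no well-ordering has been produced. Relatedly, ``perfectly many homeomorphism types'' is too weak a hypothesis on the family $(M_x)$: it is compatible with the homeomorphism relation on the family being smooth, in which case a projective classification by theories is unproblematic.

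The paper's argument repairs both weak points. For the family, it takes decorated Cantor tree surfaces $S_F$ indexed by $F\in K(2^{\N})$ and composes with Camerlo's Borel reduction of $E_0$ to the $\Homeo(2^{\N})$-orbit relation on $K(2^{\N})$, so that homeomorphism on the resulting family genuinely reduces $E_0$. For the contradiction, the mechanism is ergodic rather than order-theoretic: each set $A_\phi=\{\alpha\in 2^{\N}:\Homeo(S_{\kappa(\alpha)})\models\phi\}$ is $E_0$-invariant (since $E_0$-related parameters yield homeomorphic surfaces, hence isomorphic homeomorphism groups) and projective, hence Lebesgue measurable under PD, hence null or conull by Kolmogorov's zero-one law. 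Intersecting over the countably many sentences $\phi$ leaves a conull set on which all the homeomorphism groups are elementarily equivalent; one then selects two $E_0$-inequivalent points in it. Dimensions $n>2$ are handled separately via the dimension-jumping interpretation of Theorem~\ref{thm:pd-dim-implication} together with Lemma~\ref{lem:product}, which checks that the relevant end-invariants survive the product with $S^k$.
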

Since PD implies $\mathsf{PBP}$, the corresponding statement under projective determinacy follows immediately.

In~\cite{KKdlN22}, the second and third authors
together with Kim proved that
first-order rigidity holds for compact manifolds within ZFC, and so the
examples furnished by Theorem~\ref{thm:pd-notrigid} are necessarily
noncompact.

Theorem~\ref{thm:pd-notrigid} does not require producing examples in
all dimensions; we will show that it is a theorem of ZFC that
failure of first-order rigidity in dimension $d$ generally
causes failure
of first-order rigidity in all dimensions above $d$:

\begin{thm}~\label{thm:pd-dim-implication}
Suppose $M$ and $N$ are manifolds of some fixed dimension $d$ and let $S^k$ denote the $k$--dimensional sphere, $k\geqslant 1$. Then 
$$
\Homeo(M)\equiv\Homeo(N) \quad \Rightarrow\quad \Homeo(M\times S^k)\equiv \Homeo(N\times S^k).
$$
In particular, if there are $M$ and $N$ such that $\Homeo(M)\equiv\Homeo(N)$ and $M\times S^k\ncong N\times S^k$, then first-order rigidity fails in dimension $d+k$.
\end{thm}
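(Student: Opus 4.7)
The plan is to construct a first-order interpretation of $\Homeo(X\times S^k)$ in $\Homeo(X)$ that is uniform in the manifold $X$ of dimension $d$. Once such a uniform interpretation is in hand, the implication $\Homeo(M)\equiv\Homeo(N)\Rightarrow\Homeo(M\times S^k)\equiv\Homeo(N\times S^k)$ follows from the standard fact that first-order interpretations preserve elementary equivalence, and the ``in particular'' clause is immediate: any pair $(M,N)$ with $\Homeo(M)\equiv\Homeo(N)$ and $M\times S^k\ncong N\times S^k$ directly witnesses failure of first-order rigidity in dimension $d+k$.

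For the interpretation itself, I would first single out in $\Homeo(X\times S^k)$ the two commuting subgroups $V=\{\operatorname{id}_X\times\psi:\psi\in\Homeo(S^k)\}$ and $H=\{\phi\times\operatorname{id}_{S^k}:\phi\in\Homeo(X)\}$. Since $\Homeo(S^k)$ is the homeomorphism group of a fixed compact manifold, its isomorphism type is rigid and can be fixed with a finite tuple of parameters satisfying prescribed first-order conditions; the horizontal copy $H\cong\Homeo(X)$ is then approximately identifiable as the centralizer of $V$, after suitable support-theoretic adjustments reflecting the topology of $X$.

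The main technical obstacle is to encode an arbitrary $g\in\Homeo(X\times S^k)$ by a first-order tuple over $H$ and $V$, uniformly in $X$. A natural route is to fix a finite atlas of $S^k$ by Euclidean charts $\{U_i\}$ and to represent $g$ by its behavior on the pieces $X\times U_i$, after composing with chart-dependent conjugations drawn from $V$. Each local piece is essentially a continuous $U_i$-indexed family of homeomorphisms of $X$, which can in turn be coded as a tuple from $\Homeo(X)$ using the Rubin-style reconstruction of the topology of $X$ from $\Homeo(X)$ that underlies the paper's other results. The delicate step will be to express the gluing conditions on chart overlaps in a first-order way compatible with the group operation, so that composition and inversion in $\Homeo(X\times S^k)$ are captured faithfully by the encoded data.

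If direct interpretation turns out to be too heavy, an alternative strategy is to pass to a two-sorted setting: show that $\Homeo(X)$ is bi-interpretable with the richer structure $(\Homeo(X),X,\cdot)$, transfer elementary equivalence to that level, adjoin the fixed pair $(\Homeo(S^k),S^k,\cdot)$, and then reconstruct $\Homeo(X\times S^k)$ from the combined structure on $X\times S^k$. This breaks the problem into two applications of Rubin-style reconstruction and one product-preservation step in the two-sorted language, each individually more tractable than a monolithic group-theoretic interpretation.
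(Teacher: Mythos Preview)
Your first approach has a genuine gap: a general homeomorphism $g$ of $X\times S^k$ does \emph{not} preserve the fibration over $S^k$, so its restriction to $X\times U_i$ is not a $U_i$-indexed family of homeomorphisms of $X$. The image $g(\{x\}\times S^k)$ of a single fiber can be an arbitrarily wild copy of $S^k$ inside $X\times S^k$, and $g(X\times U_i)$ need not lie over any chart. Thus the local encoding you propose only captures fiber-preserving homeomorphisms, a tiny subgroup of $\Homeo(X\times S^k)$, and the ``gluing conditions on chart overlaps'' you mention cannot repair this: the data simply isn't there. The detour through centralizers of $V\cong\Homeo(S^k)$ is likewise a red herring, since you are working inside $\Homeo(X)$, not inside $\Homeo(X\times S^k)$.

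Your alternative strategy is closer in spirit to the paper's argument, but still misses the actual mechanism. The paper does not adjoin $(\Homeo(S^k),S^k,\cdot)$ as a separate structure; instead, it uses that $\Homeo(M)$ uniformly interprets full second-order arithmetic (Theorem~\ref{thm:kkdln}), and that $S^k$ is an analytically presented Polish space living entirely inside second-order arithmetic. One then passes to the enriched structure $[M]^+$ containing points of $M$, second-order arithmetic, and---crucially---the sort of countable \emph{sequences} of points in $M$ (Corollary~\ref{cor:noncompact-sequence}). Since points of $S^k$ are already available from arithmetic, one obtains sequences in $M\times S^k$; a homeomorphism of $M\times S^k$ is determined by its values on a countable dense sequence, so $\Homeo(M\times S^k)$ with its group law is uniformly interpretable (Corollary~\ref{cor:noncompact-inner}, Proposition~\ref{prop:homeo-interp}). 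The theorem then drops out immediately. The point you did not isolate is that encoding an arbitrary homeomorphism of the product requires an imaginary sort of countable sequences, not a finite-tuple coding over $\Homeo(X)$.
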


In Theorem~\ref{thm:pd-dim-implication}, the assumption that
both $M$ and $N$ are not homeomorphic and
$M\times S^k$ and $N\times S^k$ are also not homeomorphic is not due to
an idle worry; two non-homeomorphic manifolds can become homeomorphic
after taking a Cartesian product with a fixed manifold. However,
the choice of $S^k$ in the statement of 
Theorem~\ref{thm:pd-dim-implication} is not crucial; it suffices
to consider any manifold $X$ that is definably presented as a Polish
space (see Section~\ref{ss:effective}), such that taking a Cartesian
product with $X$ is injective on homeomorphism classes.

As is well-known and also follows from the results above, the assumptions {V=L} and $\mathsf{PBP}$ are incompatible. However, {V=L} has the same consistency strength as ZF, meaning that if ZF is consistent then so is ZF + {V=L}. On the other hand, $\mathsf{PBP}$ is a regularity hypothesis rather than a determinacy axiom. Shelah proved that every model of ZFC has a forcing extension satisfying $\mathsf{PBP}$~\cite{Shelah1984}; hence the failure of first-order rigidity has no consistency strength beyond ZFC. We record the resulting consequence explicitly:

\begin{thm}\label{thm:consistent-intro}
    Every model of ZFC has a forcing extension in which, for each
    $n\geqslant 2$, there is a pair of non-homeomorphic $n$--manifolds
    with elementarily equivalent homeomorphism groups.
\end{thm}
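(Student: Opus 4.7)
The plan is to replace the use of projective determinacy in Theorem~\ref{thm:pd-notrigid} with a forcing construction that, over an arbitrary model of $\mathrm{ZFC}$, produces a generic extension in which the needed consequences of PD hold at those specific levels of the projective hierarchy actually used. To implement this, I would first extract from the proof of Theorem~\ref{thm:pd-notrigid} the precise set-theoretic statement invoked. The introduction already advertises that the only consequence of PD used is the Lebesgue measurability of projective subsets of $2^{\Nb}$; it remains to pin down the specific level $\Sigma^1_k$ at which measurability is applied. This level is dictated by the projective complexity of the parameterization of the two non-homeomorphic manifolds and by the complexity of the criterion --- game-theoretic or measure-theoretic --- that yields elementary equivalence of their homeomorphism groups.

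Having identified the required level $k$, I would apply a Shelah-style iterated forcing to any given $V\models\mathrm{ZFC}$, producing a generic extension $V[G]$ in which every $\Sigma^1_k$ subset of $2^{\Nb}$ is Lebesgue measurable. For small $k$ this is a classical consequence of adding sufficiently many random (or Cohen) reals, and slightly higher levels can be obtained by refinements of Shelah's amoeba iterations, still with no large cardinal prerequisite. Rerunning the construction from Theorem~\ref{thm:pd-notrigid} inside $V[G]$ then produces, for each $n\geqslant 2$ (or at least for one fixed base dimension $d\geqslant 2$), a pair of non-homeomorphic $n$-manifolds with elementarily equivalent homeomorphism groups. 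If only a single base dimension is produced directly, Theorem~\ref{thm:pd-dim-implication} --- which is a theorem of $\mathrm{ZFC}$ and therefore holds in $V[G]$ --- propagates the failure of rigidity to every dimension $\geqslant d$ by taking products with spheres.

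The main obstacle is the first step: tracing through the proof of Theorem~\ref{thm:pd-notrigid} to certify that the measurability hypothesis is applied at a bounded projective level, rather than implicitly climbing the projective hierarchy in the course of, say, an induction on definable types. If the argument genuinely uses measurability at a level whose forceability is known to require large cardinals (as is the case for full $\Sigma^1_3$ regularity), then one must either refactor the proof so as to reduce its projective footprint --- for instance, by arranging the construction to depend only on a fixed low-complexity real parameter --- or else pursue a direct forcing argument in which the pair of manifolds is produced by generic objects and the elementary equivalence follows from homogeneity of the underlying forcing poset while non-homeomorphism follows from genericity. Either route yields the conclusion that the failure of first-order rigidity has consistency strength no greater than that of $\mathrm{ZFC}$, as the theorem asserts.
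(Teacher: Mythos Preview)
Your primary plan has a genuine gap. In the proof of Theorem~\ref{thm:pd-notrigid}, the sets whose measurability is needed are the sets
\[
A_\phi=\{\alpha\in 2^\N : \Homeo(S_{\kappa(\alpha)})\models\phi\},
\]
one for \emph{each} first-order sentence $\phi$, and the projective complexity of $A_\phi$ grows with the quantifier depth of $\phi$. So there is no single level $\Sigma^1_k$ whose measurability suffices; you would need measurability of all projective sets, which cannot be forced over an arbitrary ground model without large-cardinal assumptions. Your own parenthetical about $\Sigma^1_3$ already signals this, but the conclusion should be that the ``bounded-level Shelah forcing'' route is simply unavailable here, not merely an obstacle to be managed.

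Your fallback suggestion --- produce the pair generically and derive elementary equivalence from homogeneity of the forcing --- is the right instinct, and it is what the paper actually does, though in a way rather different from re-running the PD argument. The paper forces with $\mathrm{Col}(\omega,\mathfrak c^+)$ and uses a completely different family of manifolds: for each countable ordinal $\alpha$ and each $d\geqslant 2$, take $M^d_\alpha=S^d\setminus F$ with $F\cong\omega^\alpha+1$. The map $\alpha\mapsto \Th(\Homeo(M^d_\alpha))$ is parameter-free definable, so by homogeneity of the collapse the empty condition decides its values; this yields a ground-model function $\kappa\to 2^\omega$ (Lemma~\ref{lem:homogeneous-function}), which by pigeonhole ($\kappa=\mathfrak c^+$) fails to be injective. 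Hence some $\alpha\neq\beta<\kappa$ are sent to the same theory, and in the extension both are countable, so $M^d_\alpha\not\cong M^d_\beta$ by Mazurkiewicz--Sierpi\'nski while $\Homeo(M^d_\alpha)\equiv\Homeo(M^d_\beta)$. No measurability enters at all.
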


\begin{cor}
The first-order rigidity of homeomorphism groups of manifolds is independent of ZFC.
\end{cor}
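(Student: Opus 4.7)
The plan is to deduce independence simply by combining the two preceding theorems, with no further work needed beyond carefully pointing to each direction. Recall that a statement $\varphi$ is independent of ZFC when, assuming ZFC is consistent, neither ZFC $\vdash \varphi$ nor ZFC $\vdash \neg\varphi$; equivalently, both $\varphi$ and $\neg\varphi$ admit models of ZFC. So the task reduces to producing a model of ZFC in which first-order rigidity holds and a model of ZFC in which it fails.

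For the first direction, I would invoke G\"odel's classical result that $L$, the constructible universe, is an inner model of ZF + V=L in every model of ZF, so ZFC + V=L is consistent whenever ZFC is. Then Theorem~\ref{thm:vl-rigid} applies inside any model of ZFC + V=L to give first-order rigidity of $\Homeo(M)$ for all connected boundaryless manifolds, showing that ZFC does not refute first-order rigidity.

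For the other direction, I would apply Theorem~\ref{thm:consistent-intro} directly: starting from any model of ZFC, one passes to a forcing extension — which is still a model of ZFC — where in every dimension $n\geqslant 2$ there exist non-homeomorphic connected manifolds $M,N$ with $\Homeo(M)\equiv\Homeo(N)$. This exhibits a model of ZFC in which first-order rigidity fails, so ZFC does not prove it either.

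There is no real obstacle: the two theorems have been designed to fit together precisely as the two sides of an independence result, and the only subtlety is the usual consistency caveat that everything is modulo \textrm{Con(ZFC)}. The only thing worth noting explicitly is that forcing preserves ZFC (so the extension in Theorem~\ref{thm:consistent-intro} is indeed a ZFC-model), and that Theorem~\ref{thm:vl-rigid} is stated and proved within ZF + V=L, which in turn contains ZFC since both AC and CH follow from V=L (as remarked in the introduction).
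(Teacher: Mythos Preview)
Your proposal is correct and matches the paper's intended argument: the corollary is stated without proof precisely because it follows immediately from Theorem~\ref{thm:vl-rigid} (rigidity under V=L, which is consistent with ZFC by G\"odel) and Theorem~\ref{thm:consistent-intro} (failure of rigidity in a forcing extension). There is nothing to add.
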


Whereas Shelah’s theorem gives the full every-model statement, our proof of the corresponding relative consistency statement is for countable transitive models.

The above results concern the question of whether one can distinguish different manifolds by the theory of their homeomorphism groups. Similarly, one may ask if one can distinguish different homeomorphisms of the same manifold via their type. We will call a homeomorphism $g\in\Homeo(M)$ \emph{type rigid} if for
all $h\in\Homeo(M)$, we have $\tp(g)=\tp(h)$ if and only if $g$ and $h$
are conjugate in $\Homeo(M)$.

For specific manifolds, one can often
find many homeomorphisms which are type rigid, by straightforward
application of the machinery in~\cite{KKdlN2025}; for instance,
a homeomorphism $h$ of a sphere $S^1$ with north--south dynamics
is type rigid.
In fact, the conjugacy class of $h$ is \emph{isolated} by a
single formula.
That is, there is a formula $\phi(v)$ such that $\phi(h)$ holds,
and any other homeomorphism of $S^1$ satisfying $\phi$ is conjugate to
$h$ in $\Homeo(S^1)$. It follows then that the type
of $h$ is isolated by the single formula $\phi$, i.e.~for all other
formulae $\psi(v)\in\tp(h)$, we have
$\Homeo(S^1)\models \forall v\,\big(\phi(v)\to\psi(v)\big)$. 

From a model-theoretic point of view,
type rigidity measures the homogeneity of the structure $\Homeo(M)$;
indeed, two conjugate elements of $\Homeo(M)$ will certainly have
the same type, and structures in which the type of an element
determines the automorphism orbit of that element are
\emph{$1$--homogeneous}. For compact manifolds at least, a result of 
Whittaker~\cite{whittaker} shows that the automorphism group of
$\Homeo(M)$ coincides with the inner automorphisms of $\Homeo(M)$,
i.e.~conjugation. We show that for all manifolds, $1$--homogeneity
of $\Homeo(M)$ depends on the set theory being used:

\begin{thm}\label{thm:vl-type-rigid}
    Suppose {V=L} holds, and let $M$ be an arbitrary connected manifold.
    Then every $g\in\Homeo(M)$ is type rigid.
\end{thm}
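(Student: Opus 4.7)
My plan is to reduce the theorem to a definability claim tailored to V=L. For each $g \in \Homeo(M)$, I aim to produce a parameter-free first-order formula $\phi_g(v)$ in the pure group language such that $\Homeo(M) \models \phi_g(h)$ if and only if $h$ is conjugate to $g$. Since $\phi_g \in \tp(g)$ by construction, any $h$ with $\tp(h) = \tp(g)$ then realizes $\phi_g$ and is therefore conjugate to $g$, yielding type rigidity.

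The construction of $\phi_g$ has two steps. First, from the interpretation machinery of \cite{KKdlN22, KKdlN2025, KdlN2023, KdlN2024} (extended to noncompact manifolds, presumably already developed earlier in the paper in service of Theorem \ref{thm:vl-rigid}), for each $g$ one expects a finite parameter tuple $\bar a_g \in \Homeo(M)^n$ and a formula $\phi(v; \bar w)$ such that $[g] = \{h : \Homeo(M) \models \phi(h; \bar a_g)\}$, where $[g]$ denotes the conjugacy class of $g$. The parameters $\bar a_g$ encode finitely many distinguished homeomorphisms (coming for instance from compact exhaustions, parameterized balls, or reference local charts) against which $h$'s conjugacy-invariant dynamics are measured. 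Second, to eliminate these parameters I invoke V=L: any analytically presented Polish space, and in particular $\Homeo(M)$, carries a $\Sigma^1_2$-definable well-ordering $<_L$. The interpretation machinery should be robust enough to internalize $<_L$ as a first-order definable relation on $\Homeo(M)$. Once $<_L$ is first-order available, each parameter $a_i$ in $\bar a_g$ can be replaced by the $<_L$-least element of $\Homeo(M)$ realizing a parameter-free description of its role in $\phi(v;\bar w)$, producing the sought parameter-free formula $\phi_g(v)$.

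The main obstacle is internalizing $<_L$: one must verify that the interpretation apparatus is rich enough to express the $\Sigma^1_2$-definable well-ordering of $\Homeo(M)$ using only group-theoretic first-order formulas, and uniformly across all connected (possibly noncompact) manifolds of positive dimension. This is precisely where V=L is essential: without a projective well-ordering the parameter elimination would generically fail, consonant with the expected companion result (parallel to Theorem \ref{thm:pd-notrigid}) that type rigidity itself fails under PD. Once $<_L$ has been first-order internalized, the parameter elimination above completes the proof.
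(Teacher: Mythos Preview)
Your plan contains a cardinality obstruction. You aim to produce, for each $g$, a single parameter-free first-order formula $\phi_g(v)$ whose solution set in $\Homeo(M)$ is exactly the conjugacy class $[g]$. But there are only countably many $\mL$-formulas, whereas $\Homeo(M)$ has continuum many conjugacy classes for every connected $M$ of positive dimension (already for $M=S^1$, rotation number separates continuum many classes). Hence no assignment $g\mapsto\phi_g$ with the stated property can exist, under V=L or any other hypothesis. This surfaces in your parameter-elimination step as a circularity: with $\bar a_g = (g)$ and $\phi(v;w)\equiv\exists u\,(uvu^{-1}=w)$ your Step~1 is trivially satisfied, but ``replace $g$ by the $<_L$-least element realizing a parameter-free description of its role'' presupposes a parameter-free description pinning down $[g]$, which is precisely what you are trying to construct. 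A secondary issue is that $\Homeo(M)$, for an arbitrary abstract $M$, is not itself a parameter-free analytically presented Polish space, so it is unclear how the $\Sigma^1_2$ well-ordering of the reals transfers to a group-theoretically definable well-ordering of the home sort.

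The paper circumvents both problems by not attempting to isolate $[g]$ with one formula and by well-ordering a universal code space rather than $\Homeo(M)$. To each $g$ one attaches a nonempty set $\mathcal M(g)\subseteq C(B,E)^\Z\times C(B,E)^\Z$ of \emph{codes}, namely pairs of atlases for an embedded copy of $M$, the second atlas being the first twisted by $g$; one shows $g$ and $h$ are conjugate iff $\mathcal M(g)\cap\mathcal M(h)\neq\varnothing$. The space $C(B,E)^\Z\times C(B,E)^\Z$ is analytically presented uniformly in the dimension, so under V=L it carries an analytical well-ordering that is definable inside the interpreted second-order arithmetic. Taking the minimal code $\sigma_g\in\mathcal M(g)$ and, for each basic open $\mathcal U_i$ of the code space, the parameter-free $\mL$-formula $\chi_i(x)$ expressing ``the minimal element of $\mathcal M(x)$ lies in $\mathcal U_i$'', the countable family $\{\chi_i\}_{i\in\N}$ jointly pins down $\sigma_g$ via the set $\{i:\Homeo(M)\models\chi_i(g)\}$. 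Equality of types then forces $\sigma_g=\sigma_h$ and hence conjugacy. No single $\chi_i$ isolates any conjugacy class; only the full pattern does, and that pattern is recorded by the type, not by one formula.
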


By contrast:

\begin{thm}\label{thm:pd-not-type-rigid}
    Suppose $\mathsf{PBP}$ holds. Then for all connected manifolds $M$, there exist pairs
    $g,h\in\Homeo(M)$ such that $\tp(g)=\tp(h)$ but such that $g$ and $h$
    are not conjugate in $\Homeo(M)$.
\end{thm}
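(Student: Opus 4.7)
The plan is to reduce to Theorem~\ref{thm:pd-notrigid} when $\dim M = d \geqslant 2$, and to handle the one-dimensional case by a direct construction leveraging the Baire-category consequences of PD. Given $M$ of dimension $d \geqslant 2$, Theorem~\ref{thm:pd-notrigid} supplies non-homeomorphic connected $d$-manifolds $M_1, M_2$ with $\Homeo(M_1)\equiv\Homeo(M_2)$. I will assume (and argue that one can arrange) that $M_1$ and $M_2$ may be realized as disjoint open subsets $V_1, V_2$ of a chart $U \cong \R^d$ of $M$; if this is not automatic from the statement, the PD construction underlying Theorem~\ref{thm:pd-notrigid} should be re-run inside a fixed chart of $M$, producing $V_i \subseteq U$ open with $V_1 \ncong V_2$ and $\Homeo(V_1) \equiv \Homeo(V_2)$.

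Next I would select $g_i \in \Homeo(M)$ supported on $V_i$ in such a way that the first-order type of $g_i$ in $\Homeo(M)$ recovers the pair $(V_i, g_i|_{V_i})$ up to homeomorphism. Non-conjugacy of $g_1$ and $g_2$ is then automatic: any conjugator $h \in \Homeo(M)$ would restrict to a homeomorphism $V_1 \to V_2$, contradicting $V_1 \ncong V_2$. For equality of types, I need a pointed refinement of Theorem~\ref{thm:pd-notrigid}, namely elements $g_i$ with $(\Homeo(V_1), g_1) \equiv (\Homeo(V_2), g_2)$ as groups with a distinguished constant. Such a refinement ought to be extractable from the proof of Theorem~\ref{thm:pd-notrigid}, since the underlying PD-based regularity of projective sets operates uniformly in parameters. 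To transfer pointed elementary equivalence from the $\Homeo(V_i)$ into $\Homeo(M)$, I would invoke the interpretation technology of~\cite{KKdlN22,KKdlN2025}: the subgroup of $\Homeo(M)$ supported on an open set $V$, as well as the set $V$ itself, is definable in $\Homeo(M)$ using a witnessing element as parameter via standard centralizer and commutator constructions.

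For $d = 1$, where Theorem~\ref{thm:pd-notrigid} is vacuous, a direct construction is required. Here I would build a Borel family $\{g_x\}_{x \in 2^{\N}} \subseteq \Homeo(S^1)$ or $\Homeo(\R)$ with sufficiently variable conjugacy classes---for example through a carefully parametrized family of Denjoy-type deformations of an irrational rotation. For each first-order formula $\phi(v)$ in the language of groups, the set $\{x : \Homeo(M) \models \phi(g_x)\}$ is projective in $x$, so by PD has the Baire property. Intersecting over the countably many formulas yields a comeager set $A \subseteq 2^{\N}$ on which $\tp(g_x)$ is constant, and choosing $x, y \in A$ with $g_x$ and $g_y$ non-conjugate completes this case.

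The principal obstacle is the pointed refinement of Theorem~\ref{thm:pd-notrigid} combined with verifying that the first-order type of $g_i$ in $\Homeo(M)$ does not encode strictly more than the pointed type of $(\Homeo(V_i), g_i)$---equivalently, that the particular embedding $V_i \hookrightarrow M$ contributes no additional first-order data beyond what the pointed homeomorphism group of $V_i$ already encodes. A secondary challenge is the one-dimensional construction, where the family $\{g_x\}$ must be designed so that the conjugacy class varies genuinely across non-meager subsets of $2^{\N}$, while the first-order definable invariants of $g_x$ remain projectively regular enough for PD to collapse them onto a single type.
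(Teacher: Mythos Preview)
Your $d=1$ sketch is essentially the paper's approach, and in fact the paper uses this method uniformly across \emph{all} dimensions rather than reducing to Theorem~\ref{thm:pd-notrigid} for $d\geqslant 2$. Your proposed reduction for $d\geqslant 2$ has genuine gaps that cannot be patched.

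First, the manifolds supplied by Theorem~\ref{thm:pd-notrigid} are decorated Cantor-tree surfaces $S_F$ (and, in higher dimensions, products $S_F\times S^k$); these have infinite genus and are never open subsets of $\R^d$. Your suggestion to ``re-run the PD construction inside a fixed chart'' is not available: the failure of first-order rigidity in that theorem is driven by the end-space and genus-accumulation data of the $S_F$, features that open subsets of $\R^d$ simply cannot carry.

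Second, even granting open $V_1,V_2\subseteq M$ with $(\Homeo(V_1),g_1)\equiv(\Homeo(V_2),g_2)$, the transfer to $\tp^{\Homeo(M)}(g_i)$ does not follow. The interpretation machinery you cite runs in the wrong direction: $\Homeo(M)$ with parameter $g_i$ interprets far more than the pointed group $(\Homeo(V_i),g_i)$---it sees $\overline{V_i}$, its frontier, and indeed all of $M$. You correctly flag this as the ``principal obstacle,'' but it is in fact fatal to the strategy, not a detail to be filled in.

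The paper instead constructs, for each $d\geqslant 1$, a continuous map $\alpha\mapsto g_\alpha$ from $\{-1,1\}^\Z$ into $\Homeo_\partial\big(B^d(1)\big)$ that is a Borel reduction of the $D_\infty$-orbit relation on $\{-1,1\}^\Z$ to conjugacy (Propositions~\ref{prop:redconj} and~\ref{prop:redconj2}). Extending each $g_\alpha$ by the identity off a collared ball in $M$ gives elements of $\Homeo_0(M)$; because the closure of the support of $g_\alpha$ is the entire ball, conjugacy in $\Homeo(M)$ still corresponds exactly to the $D_\infty$-orbit relation. For each formula $\phi(x)$, the set $A_\phi=\{\alpha:\Homeo(M)\models\phi(g_\alpha)\}$ is projective and $D_\infty$-invariant; under PD it is Lebesgue measurable, and by the zero-one law for the Bernoulli shift it is null or conull. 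Intersecting the conull side over all countably many $\phi$ leaves a conull set on which $\tp(g_\alpha)$ is constant but the $D_\infty$-orbits, and hence conjugacy classes, are not. This is precisely your $d=1$ outline, executed with measure rather than category, and it works uniformly in $d$ with no appeal to Theorem~\ref{thm:pd-notrigid}.
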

Again, the corresponding statement under PD is an immediate corollary.

Finally, we consider rigidity and type rigidity within ZFC with infinitary
logics. Specifically, we will be interested in $L_{\omega_1\omega}$ logic.
Formulae in $L_{\omega_1\omega}$ have the same signature as in classical
first-order logic, and terms and atomic formulae are defined identically.
Negations of formulae are
again formulae, and whenever $\phi(x)$ is a formula with a free
variable $x$
then $\exists x\;\phi(x)$ is also a formula. The difference with
classical logic lies in the fact that countable disjunctions and countable
conjunctions of formulae are again formulae. Formulae that are
$L_{\omega_1\omega}$ play an important role in investigating the
descriptive set theory of spaces of countable models of theories; see~\cite{marker-infinitary} for a detailed discussion.

\begin{thm}\label{thm:infinitary-homeo}
    Let $M$ be an arbitrary connected manifold.
    There is a $L_{\omega_1\omega}$ sentence $\psi_M$ such that for all
    manifolds $N$, we have $\Homeo(N)\models\psi_M$ if and only if $M$ and
    $N$ are homeomorphic.
\end{thm}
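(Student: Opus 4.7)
The plan is to combine Rubin-style reconstruction of enough topology inside $\Homeo(M)$ with the fact that the homeomorphism type of a connected second-countable manifold is determined by countable combinatorial data. Concretely, I would first leverage the reconstruction machinery underlying the paper's earlier results to produce, uniformly in $N$, a first-order recipe that reads off from any finite tuple $\bar g\in\Homeo(N)$ a finite ``partial skeleton'' of $N$: a finite collection of regular open subsets, their inclusion and disjointness pattern, certificates that certain of them are genuine coordinate balls, local dimension, and whatever further local data is needed to pin down homeomorphism type. The formulas should be uniform in the sense that they depend only on the target finite combinatorial type, not on the ambient manifold.

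Next, $M$ admits a countable locally finite cover by pre-compact coordinate balls whose enriched combinatorial nerve $K_M$ determines $M$ up to homeomorphism (by a standard back-and-forth/gluing argument for manifolds). Writing $K_M=\bigcup_n K_M^{(n)}$ as an increasing union of finite enriched subskeletons, let $\phi_n(x_1,\dots,x_{m(n)})$ be the first-order formula from the previous step asserting that the partial skeleton witnessed by $(x_1,\dots,x_{m(n)})$ is isomorphic to $K_M^{(n)}$, and that distinguished sub-tuples of length $m(j)$ witness each $K_M^{(j)}$ for $j<n$. The $L_{\omega_1\omega}$ sentence
\[
\psi_M \;=\; \bigwedge_{n<\omega}\,\exists x_1\cdots \exists x_{m(n)}\;\phi_n(x_1,\dots,x_{m(n)})
\]
is then satisfied by $\Homeo(M)$ via the canonical cover, while any $\Homeo(N)$ satisfying $\psi_M$ admits, by a diagonal/coherence argument (dependent choice applied to the finite witnesses at each level, with compactness of the finite witness sets guaranteed by the uniform interpretation), a genuine cover with nerve $K_M$. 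One then concludes $N\cong M$ from the uniqueness half of the second step.

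The main obstacle is ensuring that the interpretation in the first step records enough \emph{local} information: a bare combinatorial nerve does not pin down a manifold, since one must rule out incorrect local dimensions, non-ball opens, and mismatched chart transitions. The reconstruction machinery from the paper's earlier results on first-order rigidity is what delivers this local data in a first-order fashion, typically by using pointwise stabilizer subgroups to name regular open sets and by using conjugation to encode their combinatorics and their intrinsic topology. Once that is in place, the role of $L_{\omega_1\omega}$ is simply to glue countably many finite-dimensional first-order conditions into a single coherent sentence, which is precisely the expressive gain of infinitary logic that is unavailable in first-order logic and responsible for this strictly stronger rigidity.
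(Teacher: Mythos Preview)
Your approach is genuinely different from the paper's, and as written it has a real gap.

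The paper does \emph{not} build $M$ from finite combinatorial approximations. Instead, it uses the already-established interpretation of second-order arithmetic in $\Homeo(N)$ to interpret the Polish space $C(B,E)^{\Z}$ of sequences of continuous maps $B^d(1)\to\R^{d'}$, together with a definable set $\mathcal{M}(N)\subseteq C(B,E)^{\Z}$ of ``codes'' (atlases) whose closure in $\R^{d'}$ is homeomorphic to $N$. Fixing a single code $(f_n)\in\mathcal{M}(M)$ and the recursive basis $\{\mathcal U_i\}$ for $C(B,E)^{\Z}$, the paper simply takes
\[
\psi_M\;\equiv\;\dim_d\;\wedge\;\exists\tau\,\exists\beta\,\exists(g_n)\Big((g_n)\in\mathcal{M}(\tau,\beta)\;\wedge\;\bigwedge_{i\in I}\chi_i((g_n))\;\wedge\;\bigwedge_{j\notin I}\neg\chi_j((g_n))\Big),
\]
where $\chi_i$ says $(g_n)\in\mathcal U_i$ and $I=\{i:(f_n)\in\mathcal U_i\}$. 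The infinite conjunction pins down a \emph{single point} of an interpreted Polish space; no limiting or coherence argument is needed.

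Your sentence $\psi_M=\bigwedge_n\exists\bar x\,\phi_n(\bar x)$ has a coherence problem you do not actually resolve. For each $n$ you get some witness tuple $\bar g_n$, but nothing forces $\bar g_{n+1}$ to extend $\bar g_n$; the clause ``distinguished sub-tuples witness each $K_M^{(j)}$ for $j<n$'' only makes each $\bar g_n$ internally coherent, not coherent with $\bar g_{n+1}$. Your appeal to ``compactness of the finite witness sets'' is unsubstantiated: the witnesses live in $\Homeo(N)^{m(n)}$ (or in $\ro(N)^{m(n)}$), which carries no compactness making the inverse system of witnesses nonempty in the limit. Dependent choice alone does not give a branch through an infinitely-branching tree that is merely level-wise nonempty. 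The natural fix is to quantify once over an interpreted \emph{sequence} object (the paper has such a sort) and put the countable conjunction inside; but at that point you are essentially doing what the paper does.

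A secondary issue is the content of your ``enriched nerve''. The bare intersection pattern of a good cover determines only the homotopy type; to recover the homeomorphism type you need the chart transition maps, and you do not say how these are encoded by first-order formulas in $\Homeo(N)$. The paper handles exactly this by interpreting $C(B,E)$ and parametrized collared balls, so that an entire atlas with its embeddings into Euclidean space becomes a single interpreted object. Without that, ``whatever further local data is needed'' is doing all the work and is not specified.
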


Moreover:

\begin{thm}\label{thm:infinitary-type}
    Let $M$ be an arbitrary connected manifold and let $g\in\Homeo(M)$.
    Then there is an $L_{\omega_1\omega}$ formula $\psi_g(x)$ such that
    for all $h\in\Homeo(M)$, we have $\Homeo(M)\models\psi_g(h)$ if and only
    if $h$ is conjugate to $g$ in $\Homeo(M)$.
\end{thm}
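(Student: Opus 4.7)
The plan is to parallel the proof of Theorem~\ref{thm:infinitary-homeo}, with one extra ingredient to encode the dynamics of $g$. The two main tools are Scott's theorem for countable structures together with a Rubin/Whittaker-style interpretation of (a countable skeleton of) $M$ inside $\Homeo(M)$ in the pure group language.

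First I would use the reconstruction machinery underlying Theorem~\ref{thm:infinitary-homeo} to associate to $M$ a countable structure $\mathcal{A}(M)$ that encodes the topology of $M$ up to homeomorphism and that is $L_{\omega_1\omega}$-interpretable in $\Homeo(M)$. Concretely, $\mathcal{A}(M)$ would be built from a countable basis $\mathcal{B}$ for $M$, equipped with relations such as inclusion, disjointness, and properly contained closure, each of which is expressible in $\Homeo(M)$ via formulas involving supports and centralizers. This is essentially the content of the proof of Theorem~\ref{thm:infinitary-homeo}: one pulls a Scott sentence of $\mathcal{A}(M)$ back through the interpretation to obtain $\psi_M$.

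Next I would expand $\mathcal{A}(M)$ by the action of $g$. After enlarging $\mathcal{B}$ if necessary so that it is invariant under both $g$ and $g^{-1}$, the element $g$ induces an automorphism $g_\ast$ of $\mathcal{A}(M)$, and the expanded countable structure $(\mathcal{A}(M),g_\ast)$ again admits a Scott sentence $\sigma_g$ in $L_{\omega_1\omega}$. The crucial observation is that the assignment $x\mapsto x_\ast$ is uniformly $L_{\omega_1\omega}$-interpretable: for every $x\in\Homeo(M)$, conjugation by $x$ acts on the interpreted copy of $\mathcal{A}(M)$ in a way which is definable from $x$ alone. Pulling $\sigma_g$ back through this parametrized interpretation, with $x$ playing the role of $g$, then yields the desired formula $\psi_g(x)$.

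For the verification one direction is immediate: if $h=fgf^{-1}$ then conjugation by $f$ furnishes an explicit isomorphism $(\mathcal{A}(M),g_\ast)\cong(\mathcal{A}(M),h_\ast)$, whence $\psi_g(h)$ holds. The converse is the delicate direction and constitutes the main obstacle: one must show that any abstract countable isomorphism witnessing $(\mathcal{A}(M),g_\ast)\cong(\mathcal{A}(M),h_\ast)$ extends to a genuine homeomorphism of $M$ conjugating $g$ to $h$. This is a relativized form of the rigidity statement that already powers Theorem~\ref{thm:infinitary-homeo} and requires choosing $\mathcal{B}$ fine enough that the augmented inclusion lattice determines the topology of $M$ on the nose; special care is needed in the noncompact case, where $\Homeo(M)$ has a richer normal subgroup structure and the basis must be selected so as not to lose information about the ends of $M$.
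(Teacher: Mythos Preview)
Your approach is genuinely different from the paper's, and in one place it has a real gap.

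\textbf{What the paper actually does.} The paper does not interpret a countable basis lattice and invoke Scott sentences. Instead it reuses verbatim the ``code'' machinery set up for the {V=L} argument. Full second-order arithmetic is uniformly first-order interpreted in $\Homeo(M)$, so the Polish space $C(B,E)^\Z\times C(B,E)^\Z$ (with $B=B^d(1)$ and $E=\R^{2d+1}$) is available as an internal sort. A \emph{code} for $g$ is a pair $\big((f_n),(h_n)\big)\in\mathcal M(\tau,\beta,g)$: an embedded atlas for $M$ together with that same atlas post-composed with $g$, relative to a sufficiently transitive $\tau$ and a collared ball $\beta$. One fixed code is then pinned down by the single $L_{\omega_1\omega}$ conjunction
\[
\phi(x,y)\;=\;\bigwedge_{i\in I}\chi_i(x,y)\;\wedge\;\bigwedge_{j\notin I}\neg\chi_j(x,y),
\]
where the $\chi_i$ say that $(x,y)$ lies in the $i^{\text{th}}$ basic open set of the Polish topology. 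The desired $\psi_g$ is then
\[
\exists\tau\;\exists\beta\;\exists\big((\gamma_n),(\delta_n)\big)\;\Big(\big((\gamma_n),(\delta_n)\big)\in\mathcal M(\tau,\beta,h)\;\wedge\;\phi\big((\gamma_n),(\delta_n)\big)\Big),
\]
and the equivalence with conjugacy is Proposition~\ref{prop:code-conj}, already proved. So the only infinitary ingredient is the description of a single point in a Polish space by its address; there is no Scott analysis.

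\textbf{Comparison.} Your Scott-sentence route is conceptually reasonable, and structurally it parallels the paper (existentially quantify over auxiliary data, then take a countable conjunction). The paper's route has the advantage that every nontrivial step---interpreting the codes, and proving that sharing a code forces conjugacy---is already done in Sections~\ref{ss:vl-fo-rigid} and~\ref{ss:vl-type}; the infinitary proof is then three lines. Your route would have to redo the reconstruction step from scratch (an abstract isomorphism of expanded basis lattices must be promoted to a homeomorphism of $M$ intertwining $g$ and $h$), which is precisely what Propositions~\ref{prop:top} and~\ref{prop:code-conj} accomplish in the paper's framework.

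\textbf{The gap.} You enlarge $\mathcal B$ to be $g$-invariant so that $g_\ast$ is an automorphism of $\mathcal A(M)$, and then assert that ``the assignment $x\mapsto x_\ast$ is uniformly $L_{\omega_1\omega}$-interpretable''. But the underlying basis now depends on $g$, not just on $M$: an arbitrary $h$ need not preserve your $\mathcal B$, so $h_\ast$ is not defined on the same $\mathcal A(M)$. Worse, even when $h=fgf^{-1}$, the homeomorphism $f$ carries your $g$-invariant basis to some $h$-invariant basis, not to the one you would build for $h$. As written, the forward direction of your verification does not go through. The fix is either to existentially quantify over the choice of $x$-invariant countable basis inside $\psi_g(x)$ (exactly as the paper quantifies over $\tau,\beta$), or to drop the invariance requirement and encode the action of $x$ by relations such as $x(\overline U)\subseteq V$ on a single fixed basis rather than by an automorphism. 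Either repair is workable, but neither is what you wrote.
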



\section{Definability in second-order arithmetic}

We recall a few notions from descriptive set theory,
adapted to the various spaces that we will need; see~\cite{moschovakis-book,kechris-book,gao-book}, for instance.
Recall that a {\em Polish space} is a separable
topological space $X$ whose topology can be induced by a complete
metric on $X$. Thus, generally, the metric is not part of the given data.
In this section, we discuss presentations of Polish spaces
and relate them to parameter-free definability in second-order arithmetic.


\subsection{Definable presentations of Polish spaces and definability}\label{ss:effective}
In this section, we will gather some background on definably
presented Polish spaces; the fundamental reason for this discussion
is that in the sequel,
we wish for all of our constructions to be carried out
\emph{parameter-free in second-order arithmetic}. That is, we will
be given the two-sorted structure
$(\N,2^{\N},+,\times,<,\in)$, i.e.~second-order
arithmetic, and we will require all implicit formulae relating to
Polish spaces to be parameter-free. The reader may find a
discussion of the content of this subsection
in~\cite{moschovakis-book,simpson-book}.

\subsubsection{Integers and rational numbers}
It is a standard fact that the structure $(\Z,+,\times,<)$ can be
parameter-free interpreted in standard first-order arithmetic 
$(\N,+,\times,<)$ as a definable set of pairs of natural numbers up
to a suitable parameter-free
definable equivalence relation. A similar interpretation
of the structure $(\Q,+,\times,<)$ can be carried out. Thus, via
an effective pairing \[\langle\cdot,\cdot\rangle\colon
\N\times\N\longrightarrow \N\]
both $\Q$ and $\Z$ can be viewed as parameter-free definable subsets of
$\N$ up to equivalence, and the operations/relations of addition,
multiplication, and order are arithmetically definable. The Euclidean
metric $d$ on $\Q$ is a parameter-free definable function.

\subsubsection{Sequences and reals}
The set of codes for sequences of natural numbers is parameter-free definable in
second-order arithmetic. Indeed, a sequence $\sigma$ of natural
numbers can be coded as a subset of $\N$ via
\[\sigma\mapsto X_{\sigma}=\{\langle n,\sigma(n)\rangle\mid n\in\N\}.\]
By the definability of the predicates $(+,\times,<)$ over $\Q$,
the set of Cauchy sequences with respect to the metric $d$
in $\Q$ is parameter-free definable. From this,
we obtain a parameter-free definable sort $\R$ consisting of convergent
Cauchy sequences, together with a definable extension of the metric $d$
and the predicates $(+,\times,<)$ to $\R$. In the sequel, $\R$ will
serve as a prototypical definably presented Polish space.

\subsubsection{Definably presented Polish spaces}
An \emph{definably presented Polish space} $X$ is given by a countable
set $D=\{x_n\}_{n\in\N}$ and a function
\[d=d_X\colon D\times D\longrightarrow \R_{\geqslant 0}\]
satisfying the axioms for a metric, such that the map
$(m,n)\mapsto d(x_m,x_n)$ is parameter-free definable in
second-order arithmetic. The points of $X$ are identified with the
(parameter-free definable) set of Cauchy sequences in $D$, up
to the natural equivalence relation.

Observe that there  are natural maps
\[n\mapsto x_n,\quad A\subseteq\N\mapsto X_A=\{x_n\mid n\in A\}.\]

The parameter-free definable subsets of the space $(X,d)$ are, by
definition, given by the parameter-free
definable sets in the structure $(\N,2^{\N},+,\times,<,\in)$, via the 
maps above.

\subsubsection{Coding open and closed sets}\label{sss:closed-open}

Let $X$ be a definably presented Polish space, with a dense
subset $\{x_n\}_{n\in\N}$ and definable metric $d$. An open subset $U$
of $X$ is given as a union of countably many metric balls of rational
radius contained in $U$, each of which is centered at a point in
$\{x_n\}_{n\in\N}$. A sequence of such balls is simply given by a
subset $A\subseteq \N$ and sequence $\sigma$
of pairs of the form
$(n,q)$ encoding an open $d$--ball of radius $q\in \Q_{>0}$ about
$x_n$, with $n\in A$. Thus, $x\in U$ if and only if there exists an
$(n,q)\in\sigma$ such that $d(x,x_n)<q$. The set of codes for subsets of $\N$ and the set of codes for sequences
of positive rationals are parameter-free definable, so codes for open subsets
of $X$ are parameter-free interpretable. Note that we are not actually
defining open subsets of $X$, but rather codes for them. It is
a straightforward exercise to write down a definable equivalence relation
for when two codes represent the same open set, and
a predicate expressing when a point
belongs to an open set.

Since closed sets are complements
of open sets, the closed subsets of $X$ are also
parameter-free interpretable. Compact subsets can be characterized by
sequential compactness; we omit the details.
A single open ball of rational radius $q$ about a point $x_n$ can be coded
by a constant sequence $(n,q)$. This, together with an
effective pairing $\N\times\Q_{>0}\longrightarrow \N$ gives a
parameter-free recursively enumerated sequence of codes for a
basis for the topology on $X$; we will
reserve $\{\mathcal U_n\}_{n\in\N}$ for such a basis. The predicate $x\in\mathcal U_i$
is parameter-free definable whenever $X$ is a definably presented Polish space.

\subsubsection{Some commonly used Polish spaces}

We will need some explicit definably presented Polish spaces. In
each case, it is clear that the Polish space is in fact definably
presentable.

\begin{itemize}
    \item For each $d\in\N$, the Euclidean space $\R^d$.
    \item For each $\R^d$ and each parameter-free definable 
    $r\in\R_{\geqslant 0}$,
    the closed ball $B^d(r)$ of radius $r$ centered at the origin; here, we
    implicitly use the standard Euclidean metric on $\R^d$.
    \item The intersection of $B^d(1)$ with coordinate planes in $\R^d$ (which
    are then homeomorphic to balls of smaller dimension).
    \item For each $d$, the $d-1$--dimensional sphere $S^{d-1}=\partial \big(B^d(1)\big)$.
    \item For all $d,d'\geqslant 1$, the Banach space $C(B^d(1),\R^{d'})$ of continuous functions $B^d(1)\to \R^{d'}$ with the usual supremum metric and the space
    $C\big(B^d(1),\R^{d'})^\Z$ of bi-infinite sequences of such functions with an appropriate product metric. 
    To lighten the notation, when the dimensions are obvious from the context,  we shall often simply write $B$, $E$, $C(B,E)$ and $C(B,E)^\Z$ for $B^d(1)$, $\R^{d'}$, $C\big(B^d(1),\R^{d'})$, respectively $C\big(B^d(1),\R^{d'})^\Z$. 
\end{itemize}

\subsection{Parameter-free projective definability}\label{ss:proj-defin}
If $X$ is a Polish space, we let $K(X)$ denote the space of all compact subsets of $X$ equipped with the {\em Vietoris topology}, which is the topology generated by sets of the form
$$
\Mgd{K\in K(X)}{K\cap U\neq \emptyset},\qquad \Mgd{K\in K(X)}{K\subseteq U}, 
$$
where $U$ ranges over open subsets of $X$. Similarly, $F(X)$ is the {\em Effros-Borel space} consisting of all closed subsets of $X$ equipped with the $\sigma$-algebra generated by sets of the form 
$$
\Mgd{F\in F(X)}{F\cap U\neq \emptyset} 
$$
with $U$ varying over open subsets of $X$. The space $K(X)$ is Polish, whereas $F(X)$ is standard Borel. Moreover, the canonical inclusion of $K(X)$ into $F(X)$ is a Borel embedding and $K(X)$ is a Borel subset of $F(X)$.

Suppose that $X$ is a definably presented Polish space. By the
discussion in Subsection~\ref{sss:closed-open}, the collections
$F(X)$ and $K(X)$ are parameter-free interpretable (i.e.~codeable in a way that
depends only on the presentation of $X$), as are the open
sets in $X$. From a fixed open set $U\subseteq X$, the collections
of compact or closed sets meeting or contained in $U$ are definable
in the interpreted sorts $K(X)$ and $F(X)$ respectively,
using only $U$ as a parameter.

If $X$ is a definably presented Polish space, then the \emph{projective}
sets are the subsets of $X$ which are definable with parameters. A subset which
is definable without parameters, we will call an \emph{parameter-free projective set}.
It is straightforward that parameter-free projective sets are closed under complements, finite unions,
and finite intersections. It is straightforward to show that these functions and sets, viewed
as relations, are parameter-free projective whenever $X$ is definably
presented.
\begin{enumerate}
    \item $\Mgd{(x,F)\in X\times F(X)}{x\in F}$,
    \item $\Mgd{(F_1,F_2)\in F(X)\times F(X)}{F_1\subseteq F_2}$
    \item $f\in C(B,E)\mapsto f[B]\in K(E)$,
    \item $(F_i)_{i\in \Z}\in F(X)^\Z\mapsto \overline{\bigcup_{i\in \Z}F_i}\in F(X)$.
\end{enumerate}

We have the following:

\begin{lemma}\label{lem:projective-property}
    Suppose $X$ is a definably presented Polish space.
    The following properties and relations are
    parameter-free projective.
    \begin{enumerate}
    \item $(x_n)_{n=1}^\infty\in X^\N$ is a sequence satisfying $x_1=\lim_{n\to \infty}x_n$,
        \item $F\in F(X)$ is the code for a connected set,
        \item $f\in C(B,E)$ is an injective function,
        \item $G\in F(X\times X)$ is the code for the
        graph of a homeomorphism between the two sets $F_1, F_2\in F(X)$. 
        \item $G\in F(X\times X)$ is the code for the
        graph
        of a homeomorphism of pairs $(F_1,A_1)\longrightarrow
        (F_1,A_2)$, for $A_i\subseteq F_i$ and $A_1,A_2,F_1,F_2\in
        F(X)$.
        \item $G\in F(X\times X)$ is the code for the graph
        of a homeomorphism of triples \[(F_1,A_1,x_1)\longrightarrow
        (F_1,A_2,x_2),\] for $x_i\in A_i\subseteq F_i$
        and $A_1,A_2,F_1,F_2\in
        F(X)$.
    \end{enumerate}
\end{lemma}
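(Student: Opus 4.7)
The plan is to verify each of the six items by writing its defining condition as a formula in the two-sorted structure $(\N,2^{\N},+,\times,<,\in)$, building on the analytical relations already listed: membership of a point in a closed set, inclusion between closed sets, the continuous image map $f\mapsto f[B]$, and closure of a countable union of closed sets. I will freely use the closure of the analytical class under Boolean operations and under quantification over $\N$, over $2^{\N}$, and over any analytically presented Polish space, whose points are coded by elements of $\N$ or $2^{\N}$. A key auxiliary observation is that every open subset of $X$ is coded by a subset of $\N$ enumerating basic balls $\{\mathcal U_n\}_{n\in\N}$; thus a quantifier ``there exists an open $U$'' is a quantifier over $2^{\N}$, and the predicate ``$x\in U$'' unfolds analytically as ``$\exists n\in A\ x\in\mathcal U_n$'' where $A\subseteq\N$ codes $U$.

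For (1) I would write
$$
\forall k\in\N\ \exists N\in\N\ \forall n\geqslant N\ d_X(x_1,x_n)<1/k,
$$
which is arithmetical once one uses that $d_X$ is definable on codes of Cauchy sequences. For (2) I would express $F$ as \emph{disconnected} by the existence of codes for open sets $U,V$ with $F\cap U\neq\emptyset$, $F\cap V\neq\emptyset$, $F\cap U\cap V=\emptyset$, and $F\subseteq U\cup V$; each clause reduces through the basis to an analytical predicate on $F$, and connectedness is the negation. For (3) injectivity of $f\in C(B,E)$ is
$$
\forall x,y\in B\ (x\neq y\to f(x)\neq f(y)),
$$
where the evaluation $(f,x)\mapsto f(x)$ is analytical from $C(B,E)\times B$ to $E$ by taking limits along Cauchy sequences in the dense set of $B$, and inequality in $E$ is given by the definable metric.

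For (4) I impose simultaneously: (i) $G\subseteq F_1\times F_2$; (ii) for each $x\in F_1$ there is a unique $y\in X$ with $(x,y)\in G$, and symmetrically for each $y\in F_2$ there is a unique $x\in X$ with $(x,y)\in G$; and (iii) continuity of both the function and its inverse, expressed as: for every $(x,y)\in G$ and every $i\in\N$ with $y\in\mathcal U_i$, there exists $j\in\N$ with $x\in\mathcal U_j$ such that every $(x',y')\in G$ with $x'\in\mathcal U_j$ satisfies $y'\in\mathcal U_i$, together with the symmetric clause exchanging the two coordinates. Each clause quantifies only over $\N$ and over points of $X$, so the full conjunction is analytical. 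Items (5) and (6) follow from (4) by adjoining, for (5), the condition that every $(x,y)\in G$ satisfies $x\in A_1\Leftrightarrow y\in A_2$; and for (6), additionally the condition $(x_1,x_2)\in G$. Both extra conditions are plainly analytical.

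The main obstacle is the precise phrasing of (4): continuity has to be formulated using the ambient basic opens $\mathcal U_n$ of $X$ while restricting the relevant points to lie in the closed sets $F_i$, and one must verify that the resulting $\forall$-$\exists$-$\forall$ pattern stays within the parameter-free projective fragment. The remaining items are routine bookkeeping, repeatedly invoking the closure properties of the analytical class.
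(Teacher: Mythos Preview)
Your argument is correct; the six items are handled in essentially the same spirit as the paper, with one noteworthy variation in item (4). The paper expresses that $G$ is the graph of a homeomorphism by first asserting that $G$ is a bijection between $F_1$ and $F_2$ and then encoding bicontinuity \emph{sequentially}: for every sequence $((x_n,y_n))_{n\in\N}$ of points of $G$, one has $x_1=\lim_n x_n$ if and only if $y_1=\lim_n y_n$. This reuses item (1) directly and keeps all quantifiers over points and sequences of points, avoiding any explicit mention of basic open sets. Your approach instead unfolds continuity via the recursive basis $\{\mathcal U_n\}$, which is equally valid and stays analytical for the reason you give, but it does not exploit item (1) and requires the extra bookkeeping you flag (restricting to points of $F_1,F_2$ inside ambient basic opens). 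For item (2) there is a similar cosmetic difference: the paper quantifies over pairs of closed sets $F_1,F_2\in F(X)$ rather than over codes for open sets, but the two formulations are interchangeable. Items (1), (3), (5), (6) are treated identically.
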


\begin{proof}
   Note that $F\in F(X)$ is connected if and only if
   $$
   \forall F_1\forall F_2\in F(X)\;\big( F_1\cup F_2\neq X \text{ or } F\cap F_1\cap F_2\neq  \emptyset\text{ or }F\cap F_1=\emptyset \text{ or }F\cap F_2=\emptyset\big), 
   $$
   which is clearly parameter-free definable in second-order arithmetic.

   Similarly, $f\in C(B,E)$ is injective if and only if 
   $$
   \forall x\in X\; \forall y\in X\; \big(x=y \text{ or } f(x)\neq f(y)\big).
   $$

   We have that $G\in F(X\times X)$ is the graph of a homeomorphism between the two sets $F_1, F_2\in F(X)$ exactly when
   \[\begin{split}
       &G \text{ is the graph of a bijection between $F_1$ and $F_2$ and}\\
      & \forall \big((x_n,y_n)\big)_{n\in \N}\; \Big(\forall n\; (x_n,y_n)\in G \to \big(x_1=\lim_{n\to \infty}x_n\leftrightarrow y_1=\lim_{n\to \infty}y_n\big)\Big)
   \end{split}\]
    which is clearly
   parameter-free definable in second-order arithmetic.
   The last two properties are established similarly. 
\end{proof}

To address the definability of the space of manifolds, we will need a good version of the Whitney Embedding Theorem for (possibly noncompact) topological manifolds that furnishes good local behavior, namely \emph{local topological flatness}: a $d$--dimensional submanifold is locally flat, in the sense that it is locally the intersection of a $d$--dimensional coordinate plane with the ambient space. For smooth submanifolds this is barely an assumption, and it is also true that a $d$--dimensional topological manifold is locally flatly embedded in some finite dimensional Euclidean space, the dimension of which depends only on $d$. It seems such a statement is difficult to locate explicitly in the literature, and so we provide a proof.

\begin{lemma}
\label{lem:locally-flat-euclidean-embedding}
Let \(M\) be a connected Hausdorff second-countable topological
\(n\)-manifold without boundary. Then \(M\) admits a topologically locally
flat embedding
\[
        F:M\hookrightarrow \mathbb R^{N(n)},
        \qquad
        N(n):=2n+1+n(n+1)=n^2+3n+1 .
\]
In particular, the ambient Euclidean dimension can be chosen to depend
only on \(n=\dim M\).
\end{lemma}

\begin{proof}
We write \(\dim_{\mathrm{cov}}\) for covering dimension. The hypotheses imply
that \(M\) is separable and metrizable, hence paracompact and normal.

First note that
\[
        \dim_{\mathrm{cov}} M\leq n .
\]
Indeed, one proves by induction on \(n\) that the small inductive dimension
satisfies \(\operatorname{ind}M\leq n\). We refer the reader to standard texts on dimension theory such as ~\cite{EngelkingDimension} for a discussion of inductive dimension.

Given \(x\in M\) and an open
neighborhood \(O\) of \(x\), choose a coordinate chart
\(\phi:C\to \phi(C)\subset \mathbb R^n\) with \(x\in C\), and choose a
Euclidean ball \(B\subset \phi(C\cap O)\) centered at \(\phi(x)\) whose
closed ball is still contained in \(\phi(C\cap O)\). Then
\[
        U:=\phi^{-1}(B)
\]
is an open neighborhood of \(x\) with \(U\subset O\), and its frontier in
\(M\) is homeomorphic to \(S^{n-1}\), with the convention that
\(S^{-1}=\varnothing\). By induction this frontier has small inductive
dimension at most \(n-1\). Thus \(\operatorname{ind}M\leq n\). Since \(M\)
is separable metrizable, Engelking's coincidence theorem gives
\[
        \operatorname{ind}M=\dim_{\mathrm{cov}}M ,
\]
so \(\dim_{\mathrm{cov}}M\leq n\); see
\cite[Theorem 1.7.7]{EngelkingDimension}.

Let \(\mathcal C\) be the open cover of \(M\) by coordinate domains.
By Ostrand's theorem, applied to the metrizable space \(M\) with
\(\dim_{\mathrm{cov}}M\leq n\), there are \(n+1\) discrete families of open
sets
\[
        \mathcal V_0,\ldots,\mathcal V_n
\]
such that
\[
        \mathcal V:=\bigcup_{a=0}^n \mathcal V_a
\]
covers \(M\) and refines \(\mathcal C\); see
\cite[Theorem 1]{Ostrand1965}. Here \emph{discrete} means that each point of
\(M\) has a neighborhood meeting at most one member of the family. In
particular, the finite union \(\mathcal V\) is locally finite.

For each \(V\in\mathcal V_a\), choose a coordinate chart
\[
        \phi_V:C_V\longrightarrow \phi_V(C_V)\subset \mathbb R^n
\]
such that \(V\subset C_V\). Since \(M\) is paracompact Hausdorff, choose a
partition of unity
\[
        \{\rho_V\}_{V\in\mathcal V}
\]
subordinate to the locally finite open cover \(\mathcal V\), with
\[
        \operatorname{supp}(\rho_V)\subset V
\]
for each \(V\); see, for example, \cite[Theorem 41.7]{munkres}.

Put
\[
        c:=\frac{1}{n+2}.
\]
Because each point of \(M\) belongs to at most one member of each discrete
family \(\mathcal V_a\), at most \(n+1\) of the functions \(\rho_V\) are
nonzero at any given point. Since they sum to \(1\), the open sets
\[
        U_V:=\{x\in M:\rho_V(x)>c\}
\]
cover \(M\).

Choose a continuous function \(\theta:[0,1]\to[0,1]\) such that
\[
        \theta(t)=0 \quad \text{for } t\leq c/2,
        \qquad
        \theta(t)=1 \quad \text{for } t\geq c.
\]
Define
\[
        \lambda_V:=\theta\circ\rho_V .
\]
Then \(\operatorname{supp}(\lambda_V)\subset V\subset C_V\), and
\(\lambda_V=1\) on \(U_V\).

Define a continuous map \(h_V:M\to\mathbb R^n\) by
\[
        h_V(x)=
        \begin{cases}
        \lambda_V(x)\phi_V(x), & x\in C_V,\\
        0, & x\notin C_V .
        \end{cases}
\]
This is continuous because \(\operatorname{supp}(\lambda_V)\subset C_V\).
For each color \(a=0,\ldots,n\), define
\[
        f_a:M\longrightarrow \mathbb R^n,
        \qquad
        f_a(x):=\sum_{V\in\mathcal V_a} h_V(x).
\]
The sum is locally finite; in fact, near any point, at most one summand
from the discrete family \(\mathcal V_a\) is nonzero. Moreover, if
\(V\in\mathcal V_a\), then
\[
        f_a|_{U_V}=\phi_V|_{U_V}.
\]
Thus at every point of \(M\), at least one of the finitely many maps
\(f_0,\ldots,f_n\) is literally a coordinate chart on a neighborhood of
that point.

Since \(M\) is separable metrizable and \(\dim_{\mathrm{cov}}M\leq n\),
the Menger--Nöbeling embedding theorem gives a topological embedding
\[
        e:M\hookrightarrow \mathbb R^{2n+1};
\]
see \cite[Theorem 1.11.4]{EngelkingDimension}. Now define
\[
        F:M\longrightarrow
        \mathbb R^{2n+1}\times(\mathbb R^n)^{n+1}
        \cong \mathbb R^{n^2+3n+1}
\]
by
\[
        F(x):=\bigl(e(x),f_0(x),\ldots,f_n(x)\bigr).
\]
This is a topological embedding, since projection onto the first factor
recovers the embedding \(e\).

It remains to check local flatness. Fix \(x\in M\). Since the sets \(U_V\)
cover \(M\), choose \(V\in\mathcal V_a\) with \(x\in U_V\). On \(U_V\),
the map \(f_a\) equals the coordinate chart \(\phi_V\). After permuting
the Euclidean coordinates of the target, write
\[
        F(y)=\bigl(f_a(y),H(y)\bigr)
        \in \mathbb R^n\times \mathbb R^{N(n)-n}
\]
for \(y\in U_V\), where \(H\) denotes all the remaining coordinate
functions. Let
\[
        A:=\phi_V(U_V)\subset\mathbb R^n
\]
and define
\[
        g:A\longrightarrow \mathbb R^{N(n)-n},
        \qquad
        g(u):=H(\phi_V^{-1}(u)).
\]
Then
\[
        F(U_V)=\{(u,g(u)):u\in A\},
\]
the graph of the continuous map \(g\).

Because \(F\) is an embedding and \(U_V\) is open in \(M\), there is an open
neighborhood \(\Omega\subset\mathbb R^{N(n)}\) of \(F(x)\) such that
\[
        \Omega\cap F(M)=F(U_V).
\]
Choose product neighborhoods \(A_0\subset A\) of \(\phi_V(x)\) and
\(W\subset\mathbb R^{N(n)-n}\) of \(g(\phi_V(x))\) such that
\[
        A_0\times W\subset \Omega
        \quad\text{and}\quad
        g(A_0)\subset W .
\]
On \(A_0\times W\), the homeomorphism
\[
        (u,v)\longmapsto (u,v-g(u))
\]
carries
\[
        (A_0\times W)\cap F(M)
        =
        \{(u,g(u)):u\in A_0\}
\]
onto
\[
        A_0\times\{0\}.
\]
Thus near \(F(x)\), the pair
\[
        \bigl(\mathbb R^{N(n)},F(M)\bigr)
\]
is homeomorphic to
\[
        \bigl(\mathbb R^{N(n)},\mathbb R^n\times\{0\}\bigr)
\]
locally. Hence \(F\) is topologically locally flat.
\end{proof}

We can now construct a parameter-free projective space of $d$--dimensional submanifolds of $\R^{d'}$, which contains homeomorphic copies of all $d$--dimensional connected manifolds whenever $d'$ is at least $N(d)$, as in Lemma~\ref{lem:locally-flat-euclidean-embedding}.

\begin{lem}\label{lem:manifold-projective}
Fix dimensions $1\leqslant d\leqslant d'$.  The collection 
$$
\mathrm{manif}_d=\Mgd{   F\in F(\R^{d'})   }{F \text{ is a connected $d$-dimensional submanifold}}
$$
is parameter-free projective.
\end{lem}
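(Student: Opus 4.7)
The plan is to translate ``$F$ is a connected $d$-dimensional manifold'' into a Boolean combination of analytical conditions on $F\in F(\mathbb{R}^{d'})$. Connectedness is directly analytical by Lemma~\ref{lem:projective-property}(2), and nonemptiness by $\exists x\,(x\in F)$; what remains is to render ``$F$ is locally Euclidean of dimension $d$'' analytical.

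The characterization I would use is: $F$ is locally Euclidean of dimension $d$ if and only if
\[
\forall x\in F\;\exists n\in\mathbb{N}\;\exists f\in C(B^d(1),\mathbb{R}^{d'})\;\Big[\,f\text{ injective},\;f(0)=x,\;f[B^d(1)]\subseteq F,\;x\in\mathcal{U}_n,\;F\cap\mathcal{U}_n\subseteq f[B^d(1)\setminus S^{d-1}]\,\Big].
\]
The backward direction uses that a continuous injection from the compact space $B^d(1)$ to the Hausdorff space $\mathbb{R}^{d'}$ is a homeomorphism onto its image, so $f$ restricts to a homeomorphism from the open ball $B^d(1)\setminus S^{d-1}\cong\mathbb{R}^d$ onto $f[B^d(1)\setminus S^{d-1}]$; combined with $f[B^d(1)]\subseteq F$, the assumed containment yields $F\cap\mathcal{U}_n=f[B^d(1)\setminus S^{d-1}]\cap\mathcal{U}_n$, which is a neighborhood of $x$ in $F$ that is open inside a copy of $\mathbb{R}^d$, hence contains a genuine $\mathbb{R}^d$-neighborhood. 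For the forward direction, given a standard chart around $x$, I would pre-compose with a contraction so that $f\co B^d(1)\to F$ is well-defined, continuous, and injective with $f(0)=x$ and $f[B^d(1)\setminus S^{d-1}]$ open in $F$; then I would choose a basic open $\mathcal{U}_n\ni x$ small enough that $F\cap\mathcal{U}_n\subseteq f[B^d(1)\setminus S^{d-1}]$.

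For analyticity, every clause inside the bracket rests on predicates already shown to be analytical in Subsection~\ref{ss:proj-defin} and Lemma~\ref{lem:projective-property}: injectivity of $f$ by Lemma~\ref{lem:projective-property}(3); evaluations $f(t)$ as definable continuous operations on the analytically presented space $C(B^d(1),\mathbb{R}^{d'})$; the containment $f[B^d(1)]\subseteq F$ by rewriting as $\forall t\in B^d(1)\,(f(t)\in F)$ using the analytical membership predicate for $F\in F(\mathbb{R}^{d'})$; the condition $x\in\mathcal{U}_n$ from the analytical basis coding; and finally $F\cap\mathcal{U}_n\subseteq f[B^d(1)\setminus S^{d-1}]$ by expanding as $\forall y\in\mathbb{R}^{d'}\bigl(y\in\mathcal{U}_n\wedge y\in F\to\exists t\in B^d(1)\,(t\notin S^{d-1}\wedge f(t)=y)\bigr)$. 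All outer quantifiers range over $\mathbb{N}$ or over analytically presented Polish spaces, so the entire formula remains analytical.

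The only real subtlety, and the one design choice that matters, is pairing a closed chart domain $B^d(1)$ with a basic-open containment on the ambient side. This bundles local openness of the chart image together with the correct dimension into a single existential, avoiding the need either for an extra quantifier over open subsets of $F$ or for an explicit invocation of invariance of domain inside the formula. Once this formulation is adopted, the analyticity check is routine bookkeeping against the primitives of Subsection~\ref{ss:proj-defin} and Lemma~\ref{lem:projective-property}.
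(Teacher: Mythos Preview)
Your proof is correct and takes a genuinely different route from the paper's. The paper characterizes ``$F_1$ is a $d$-manifold'' via an \emph{ambient} condition: for each $x\in F_1$ there exists $F_2\in F(\mathbb{R}^{d'})$ with a homeomorphism of triples $(F_2,\,F_1\cap F_2,\,x)\cong(B^{d'}(1),\,B^d(1),\,0)$, and then invokes Lemma~\ref{lem:projective-property}(6) directly. By invariance of domain, such an $F_2$ is automatically a neighborhood of $x$ in $\mathbb{R}^{d'}$, so this does certify local Euclideanity; the forward direction, however, tacitly assumes the embedding $F_1\hookrightarrow\mathbb{R}^{d'}$ is locally flat. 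Your approach is \emph{intrinsic}: you quantify only over injective maps $B^d(1)\to\mathbb{R}^{d'}$ landing in $F$ and use a basic ambient open $\mathcal{U}_n$ to force the chart image to be a neighborhood of $x$ in $F$. This costs a bit more bookkeeping against the primitives of Subsection~\ref{ss:proj-defin}, but it matches the lemma statement literally, capturing every closed $F\subseteq\mathbb{R}^{d'}$ that is a topological $d$-manifold regardless of local flatness. For the paper's downstream uses (codes built from atlases of embedded balls), the two readings of $\mathrm{manif}_d$ coincide, so either argument suffices.
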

\begin{proof}
    Since connectedness is already parameter-free projective, it suffices to show
    that being a $d$--dimensional submanifold of $\R^{d'}$ is parameter-free projective.
    Let $\R^d$ be embedded in $\R^{d'}$  on the first $d$ coordinates, whereby $B^d(1)=\R^d\cap B^{d'}(1)$. A connected set $F_1\in F(\R^{d'})$ is a locally flat $d$--dimensional
    submanifold if and only if, for all $x\in F_1$, there exists a
    $F_2\in F(\R^{d'})$ such that we obtain a homeomorphism of triples
    \[(F_2,F_1\cap F_2,x)\cong (B^{d'}(1),B^d(1),0).\]
    The conclusion now follows by Lemma~\ref{lem:projective-property}.
\end{proof}

If one wishes to avoid the issue of local flatness of topological submanifolds of Euclidean spaces, one can modify the proof we have given to express that $F$ is locally the graph of an embedding of $\R^d$ into $\R^{d'}$. The details are not difficult to make explicit.

The following corollaries are straightforward.

\begin{cor}\label{cor:manifold-homeo}
    The homeomorphism relation on pairs of $d$--dimensional submanifolds
    of $\R^{d'}$ is parameter-free projective.
\end{cor}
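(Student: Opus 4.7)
The plan is to express the homeomorphism relation on $\mathrm{manif}_d$ by existentially quantifying over codes for graphs of homeomorphisms. Concretely, for $F_1,F_2\in\mathrm{manif}_d$, I would write that $F_1$ is homeomorphic to $F_2$ if and only if
\[
\exists G\in F(\R^{d'}\times\R^{d'})\;\big(G\text{ is the graph of a homeomorphism }F_1\to F_2\big).
\]
The forward direction rests on the easy observation that any homeomorphism between closed subsets of a Hausdorff space has closed graph in the ambient product: if $\phi\colon F_1\to F_2$ is a homeomorphism, then $\operatorname{graph}(\phi)$ is closed in $F_1\times F_2$, which is itself closed in $\R^{d'}\times\R^{d'}$; the reverse direction is immediate.

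Granted this reformulation, the corollary reduces to assembling two ingredients already in place. First, by the discussion in Subsection~\ref{ss:proj-defin}, the Effros--Borel space $F(\R^{d'}\times\R^{d'})$ is analytically presentable from the analytical presentation of the ambient $\R^{d'}$, so existential quantification over it is just second-order quantification over codes in $2^{\N}$ and thereby preserves analytic definability. Second, by Lemma~\ref{lem:projective-property}(4), the matrix of the quantifier, namely the predicate ``$G$ is the graph of a homeomorphism between $F_1$ and $F_2$'', is analytical in the three variables $(G,F_1,F_2)$. Projecting existentially in $G$ therefore yields an analytical binary relation in $(F_1,F_2)$.

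The main obstacle, such as it is, is purely bookkeeping: one must confirm that the quantifier over $G\in F(\R^{d'}\times\R^{d'})$ really does land in the analytical hierarchy without invoking parameters, and that the graph predicate of Lemma~\ref{lem:projective-property}(4) carries over to the ambient product space. Since the coding of $F(\R^{d'}\times\R^{d'})$ is obtained canonically from that of $\R^{d'}\times\R^{d'}$, and since connectedness and manifoldhood of $F_1,F_2$ (used to restrict to $\mathrm{manif}_d$) are themselves analytical by Lemma~\ref{lem:manifold-projective}, no fresh parameters intrude and the relation on $\mathrm{manif}_d\times\mathrm{manif}_d$ is analytical as claimed.
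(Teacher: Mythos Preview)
Your argument is correct and is exactly the approach the paper has in mind: the corollary is stated there as ``straightforward'' immediately after Lemma~\ref{lem:projective-property} and Lemma~\ref{lem:manifold-projective}, and the intended proof is precisely to existentially quantify over $G\in F(\R^{d'}\times\R^{d'})$ and invoke item~(4) of Lemma~\ref{lem:projective-property}. Your additional remarks about the graph being closed and about the coding introducing no parameters just make explicit the routine verifications the paper elides.
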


\begin{cor}\label{cor:homeogrp-projective}
    Let $E=\R^{d'}$ be a fixed Euclidean space. The subset
    \[\mathrm{HGrp}_d\subseteq F(E)\times F(E\times E)\]
    consisting of pairs $(M,G)$, for which $M\in \mathrm{manif}_d$ and
    $G$ is the graph of a homeomorphism of $M$, is parameter-free projective.
\end{cor}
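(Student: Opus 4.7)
My plan is to express $\mathrm{HGrp}_d$ as the intersection of two already-analytical conditions on a pair $(M,G)\in F(E)\times F(E\times E)$: first, that $M$ is a connected $d$-dimensional submanifold of $E$; second, that $G$ is the graph of a self-homeomorphism of $M$. The first condition is immediate from Lemma~\ref{lem:manifold-projective}, which shows that $\mathrm{manif}_d\subseteq F(E)$ is analytical; as a condition on the pair $(M,G)$ it is the pullback of $\mathrm{manif}_d$ under the projection onto the first coordinate, which preserves analyticity.

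For the second condition I will invoke Lemma~\ref{lem:projective-property}(4), which says that the ternary relation
\[
R(G,F_1,F_2) \;\equiv\; ``G\text{ is the graph of a homeomorphism between }F_1\text{ and }F_2\text{''}
\]
is analytical on $F(E\times E)\times F(E)\times F(E)$. The condition I actually want is $R(G,M,M)$, which is obtained from $R$ by the parameter-free definable diagonal substitution $(M,G)\mapsto (G,M,M)$; since pullbacks under parameter-free definable maps preserve analyticity, the set cut out by the second condition is analytical as well. Intersecting the two analytical sets (analytical sets being closed under finite intersections, as recalled just above Lemma~\ref{lem:projective-property}) yields precisely $\mathrm{HGrp}_d$.

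There is essentially no obstacle here: the content has been extracted into Lemmas~\ref{lem:projective-property} and~\ref{lem:manifold-projective}. The only point worth double-checking is that the coding conventions for $F(E)$ and $F(E\times E)$ from Subsection~\ref{ss:effective} are mutually compatible, so that the diagonal substitution $M\mapsto (M,M)$ is implemented by a parameter-free definable operation on codes; this is transparent from the description of basic open sets in products of Polish spaces, and requires no further work.
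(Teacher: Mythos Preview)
Your proposal is correct and matches the paper's approach: the paper simply declares this corollary ``straightforward'' after Lemmas~\ref{lem:projective-property} and~\ref{lem:manifold-projective}, and your argument is precisely the intended unpacking of that remark.
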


\begin{cor}\label{cor:seq-manifold}
    The set 
    $$
    \mathfrak m_d=\BMGD{(f_n)\in C(B,E)^\Z}{\;
    \overline{\bigcup_{n\in\Z} f_n[B]}\;\in\;  \mathrm{manif}_d }
    $$ 
    is
    parameter-free projective. 
\end{cor}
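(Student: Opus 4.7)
The plan is to chain together the analyticity statements already collected, using that parameter-free (analytical) definability is preserved under conjunction, universal quantification over $\N$, and existential projection onto any parameter-free interpretable sort. Concretely, I would rewrite membership in $\mathfrak m_d$ as
\[
(f_n)\in\mathfrak m_d \iff \exists F\in F(E)\;\exists (K_n)_{n\in\Z}\in K(E)^\Z\;\Big(\forall n\,(K_n=f_n[B])\;\wedge\; F=\overline{\textstyle\bigcup_{n\in\Z} K_n}\;\wedge\; F\in\mathrm{manif}_d\Big).
\]
Each of the three internal clauses is already known to be analytical. The graph $K=f[B]$ of the image map $C(B,E)\to K(E)$ is item (3) of the list preceding Lemma~\ref{lem:projective-property}. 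The graph of the countable closed union $(F_i)_{i\in\Z}\mapsto\overline{\bigcup_i F_i}$ is item (4) of that list, which applies here via the Borel inclusion $K(E)\hookrightarrow F(E)$. Finally, the condition $F\in\mathrm{manif}_d$ is exactly the content of Lemma~\ref{lem:manifold-projective}.

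To conclude, I would invoke the closure properties of analyticity. The universal quantifier $\forall n$ over $\Z$ is a first-order integer quantifier in second-order arithmetic and therefore preserves analytical definability. The outer existential projections onto $F(E)$ and $K(E)^\Z$ also preserve analyticity because both sorts are parameter-free interpretable in the two-sorted structure $(\N,2^\N,+,\times,<,\in)$, as recalled in Subsection~\ref{ss:effective}; in particular $K(E)^\Z$ is interpretable via an effective pairing $\Z\times\N\to\N$ combined with the coding of $K(E)$ already discussed. There is no real obstacle here — the only mild bookkeeping is keeping track of which object lives in which interpretable sort so that the closure properties apply cleanly. Thus the corollary is essentially a formal composition of Lemma~\ref{lem:projective-property} and Lemma~\ref{lem:manifold-projective} together with the standard closure of parameter-free second-order arithmetic definability under the relevant logical operations.
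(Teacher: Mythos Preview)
Your proposal is correct and matches the paper's approach: the paper states this as an immediate corollary with no explicit proof, and the intended argument is exactly the composition of the analytical maps $f\mapsto f[B]$ and $(F_i)\mapsto\overline{\bigcup_i F_i}$ from the list preceding Lemma~\ref{lem:projective-property} with the analyticity of $\mathrm{manif}_d$ from Lemma~\ref{lem:manifold-projective}, together with the standard closure properties you invoke.
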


Recall that we fixed a recursively enumerated 
set $\{\mathcal U_i\}_{i\in\N}$  of codes for a countable
basis for the topology on
$C(B,E)^\Z$.
For an element $(f_n)\in\mathfrak m_d$, we may think of $(f_n)$ as
encoding an atlas for an embedded submanifold of $E$, and the indices $i$ for which
$(f_n)$ belongs to $\mathcal U_i$ can be viewed as an address for $(f_n)$.

\begin{prop}\label{prop:conj-projective}
    Let $\mathrm{HGrp}_d$ be as in
    Corollary~\ref{cor:homeogrp-projective}. The relation
    \[\mathrm{conj}_d\subseteq \mathrm{HGrp}_d\times \mathrm{HGrp}_d\]
    consisting of pairs
    $(M_1,G_1)$ and $(M_2,G_2)$, for which there exists a homeomorphism
    $\phi\colon M_1\rightarrow M_2$ conjugating the homeomorphism
    with graph $G_1$ to the homeomorphism with graph $G_2$, is
    parameter-free projective.
\end{prop}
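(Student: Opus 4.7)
The plan is to witness the conjugacy relation by an existential quantifier over the graph of a conjugating homeomorphism. More precisely, I claim that a pair $((M_1,G_1),(M_2,G_2))$ belongs to $\mathrm{conj}_d$ if and only if there exists $H\in F(E\times E)$ such that:
\begin{enumerate}
\item $H$ is the graph of a homeomorphism $\phi\colon M_1\to M_2$, and
\item $\phi$ conjugates the homeomorphism $g_1$ with graph $G_1$ to the homeomorphism $g_2$ with graph $G_2$.
\end{enumerate}
Since $F(E\times E)$ is analytically presented, existential quantification over it preserves analyticity, so it suffices to show that the conjunction of (1) and (2), viewed as a relation in the tuple $(M_1,M_2,G_1,G_2,H)$, is analytical.

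Condition (1) is precisely part (4) of Lemma~\ref{lem:projective-property}, hence analytical. For condition (2), I would rewrite the intertwining identity $\phi\circ g_1=g_2\circ\phi$ purely in graph-theoretic terms as
\[
\forall x,y,u,v\in E\colon \bigl((x,y)\in G_1 \wedge (x,u)\in H \wedge (y,v)\in H\bigr)\to (u,v)\in G_2.
\]
Because $H$ is the graph of a bijection $M_1\to M_2$, this single implication already captures the full conjugacy identity $\phi\circ g_1=g_2\circ\phi$, and hence is equivalent to saying that $\phi$ conjugates $g_1$ to $g_2$. The displayed formula universally quantifies over points of the analytically presented Polish space $E$ and refers only to the membership predicates in the closed sets $G_1,G_2,H$, which are analytical by the remarks preceding Lemma~\ref{lem:projective-property}. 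Consequently condition (2) is analytical.

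Combining (1) and (2) and existentially quantifying over $H\in F(E\times E)$ yields the desired analytical description of $\mathrm{conj}_d$. I do not foresee a substantive obstacle: the argument is essentially an unpacking of definitions in light of the lemmas already established, with the only mildly delicate point being the translation of the functional conjugation equation into the relational language of graphs, so that no quantifier leaves the range of an analytically presented Polish space.
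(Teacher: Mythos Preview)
Your proposal is correct and follows essentially the same approach as the paper: existentially quantify over the graph $H\in F(E\times E)$ of the conjugating homeomorphism, invoke Lemma~\ref{lem:projective-property}(4) for the homeomorphism condition, and express the intertwining identity $\phi\circ g_1=g_2\circ\phi$ as a universal formula in graph membership. Your single implication is in fact slightly cleaner than the paper's biconditional formulation, but the two encodings are equivalent once $H$ is known to be the graph of a bijection $M_1\to M_2$.
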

\begin{proof}
    Write $(x_1,y_1)\in G_1$ and $(x_2,y_2)\in G_2$ for typical points.
    The existence of the homeomorphism $\phi$ is equivalent to the
    existence of a graph $G_3$ of a homeomorphism between $M_1$ and
    $M_2$ such that 
    \[
    \forall(x_i,y_i)\;\Big(\big((x_1,y_1)\in G_1\wedge (y_1,y_2)\in G_3\big)
    \leftrightarrow \big((x_1,x_2)\in G_3\wedge(x_2,y_2)\in G_2\big)\Big).
    \]
    It is clear then that $\mathrm{conj}_d$ is parameter-free projective.
\end{proof}

To investigate type rigidity of homeomorphisms of manifolds, we will
require a parameter-free projective predicate encoding pairs of atlases for
a manifold $M$, where one atlas is twisted by a fixed homeomorphism
of $M$. We write $\mathrm{mark}_d$ for the set of pairs
$((f_n),(g_n))\in\mathfrak m_d\times \mathfrak m_d$ such that:
\begin{enumerate}
    \item \[M:=\overline{\bigcup_{n\in\Z} f_n[B]}=
    \overline{\bigcup_{n\in\Z} g_n[B]};\]
    \item There exists a $G$ such that $(M,G)\in\mathrm{HGrp}_d$
    and such that for all $x\in B$ and all $n\in\Z$ we have
    $(f_n[x],g_n[x])\in G$.
\end{enumerate}

The conditions defining $\mathrm{mark}_d$ express that the atlas
encoded by $(g_n)$ is simply the atlas encoded by $(f_n)$ composed with
a fixed homeomorphism of $M$ whose graph is encoded by $G$. For
$(M_0,G_0)\in \mathrm{HGrp}_d$ fixed, we let $\mathrm{mark}_d(M_0,G_0)$
consist of pairs $((f_n),(g_n))\in\mathfrak m_d\times \mathfrak m_d$
which each encode $M_0$, and where for all $x\in B$ and all $n\in\Z$ we
require $(f_n[x],g_n[x])\in G_0$. The following is straightforward:

\begin{prop}\label{prop:mark-projective}
    The set $\mathrm{mark}_d$ is parameter-free projective.
\end{prop}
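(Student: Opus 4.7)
The plan is to verify that each of the two clauses in the definition of $\mathrm{mark}_d$ can be expressed as an analytical predicate in $((f_n),(g_n))$, and then appeal to closure of the analytical sets under finite conjunction and existential quantification over analytically presented Polish spaces such as $F(E\times E)$.

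For clause~(1), I would note that by the list in Section~\ref{ss:proj-defin}, the map $f\in C(B,E)\mapsto f[B]\in K(E)\hookrightarrow F(E)$ is analytical, as is the map $(F_i)_{i\in\Z}\mapsto \overline{\bigcup_i F_i}$. Composing them, $(f_n)\mapsto \overline{\bigcup_n f_n[B]}$ and $(g_n)\mapsto \overline{\bigcup_n g_n[B]}$ are analytically definable maps into $F(E)$. Equality of two codes in $F(E)$ is the conjunction of two instances of the $\subseteq$ relation, which is already listed as analytical, so clause~(1) is analytical.

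For clause~(2), I would existentially quantify over $G\in F(E\times E)$ and impose two conditions. First, $(M,G)\in\mathrm{HGrp}_d$, where $M$ is the common closed set produced in clause~(1); this is analytical by Corollary~\ref{cor:homeogrp-projective}. Second, I impose the universal condition
\[
\forall n\in\Z\;\forall x\in B\;\big((f_n[x],g_n[x])\in G\big).
\]
Point membership in a closed set is analytical by Lemma~\ref{lem:projective-property}(1), continuous evaluation $(f,x)\mapsto f[x]$ is analytical, and both quantifiers are over analytically presented sorts ($\Z$ and $B$) and hence admissible in second-order arithmetic. Conjoining with the $\mathrm{HGrp}_d$ predicate and existentially quantifying over $G$ preserves analyticity, giving the result.

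There is no real obstacle here; the only mild subtlety is the quantifier $\forall x\in B$, but this is handled either by treating $x$ as a Cauchy sequence in the rational points of $B$ (hence a second-sort object in second-order arithmetic), or equivalently by observing that since $G$ is closed and $f_n,g_n$ are continuous, the condition need only be checked on the fixed countable dense subset of $B$, reducing the universal quantifier over $B$ to a countable conjunction.
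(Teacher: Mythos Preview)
Your proposal is correct and is exactly the kind of routine unwinding the paper has in mind when it declares the proposition ``straightforward'' without further argument. One minor citation slip: point membership $x\in F$ is item~(1) of the enumerated list \emph{preceding} Lemma~\ref{lem:projective-property}, not item~(1) of the lemma itself (which concerns convergent sequences).
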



\subsection{Uniform interpretations and first-order rigidity}
\newcommand{\M}[0]{\mathcal{M}}
\newcommand{\F}[0]{\mathcal{F}}
\newcommand{\A}[0]{\mathcal{A}} 
\newcommand{\B}[0]{\mathcal{B}}

\begin{defn}\label{def:rigidity}
	Let $\F$ be a family of structures in a countable signature $\mathcal{S}$
    that is
    definable in second-order arithmetic. We write $\For(\F)$ for the set theoretical statement that expresses the fact that the class $\F$ is first-order rigid, that is that $\A\equiv\B$ implies $\A\cong\B$ for $\A,\B\in \F$. This is expressed by a formula on the same parameters used to define $\mathcal{F}$.
\end{defn}

This work can be understood as an inquiry into the set theoretical strength of statements of the form $\For(\F)$ where $\F$ is one of the following:
\begin{itemize}
	\item the collection of all homeomorphism groups of manifolds of a fixed dimension $m$, 
	\item for a fixed compact manifold $M$, the collection of all expansions of the group structure $\Homeo(M)$ by a constant  that distinguishes an element of $\Homeo(M)$, or some variation thereof.
\end{itemize}

\begin{defn}
	Consider families of structures $\F$ and $\F'$, with countable
    signatures $\mathcal{S}$ and
    \[\mathcal{S}'=\{R_i^{k_i},f_j^{r_j}\mid i\in I,j\in J\},\]
    respectively. By a \emph{uniform interpretation $\iota$ of structures
    from $\F'$ in $\F$} we mean a collection of parameter-free
    $\mathcal{S}$-formulae
    \[\theta,\chi, \{\phi_i\}_{i\in I},\{\psi_j\}_{j\in J}\]
    such that for every $\A\in\F$ these formulae interpret a structure
    $\iota(\A)\in\F'$ in the usual sense; that is:
	\begin{itemize}
		\item $\theta$ defines a subset $X\subseteq \A^k$, and $\chi$ defines
        an equivalence relation $E$ on $X$;
		\item for every $i\in I$, the relation defined by $\phi_i$ factors
        through a $k_i$-ary predicate on $X/E$;
		\item for every $j\in J$, the relation defined by $\psi_j$ factors
        through the graph of an $r_j$-ary function on $X/E$;
		\item $\iota(\A)$ is the structure with universe $X/E$, where $R_i$ is
        interpreted by $\phi_i$ and the function symbol $f_j$ by $\psi_j$.
	\end{itemize}
  In this situation we may also write, more succinctly, that $\A\in\F$
  uniformly interprets $\iota(\A)\in\F'$ without parameters. If
  $\iota(\A)\cong\iota(\B)$ always implies $\A\cong\B$, then we say that
  $\iota$ is \emph{faithful}, and we write $\F\leq^{\mathrm{ufi}}\F'$.
\end{defn}

It is a standard fact (i.e.~a consequence of ZFC) that if $\A\equiv\B$, then any parameter-free uniform interpretation gives $\iota(\A)\equiv\iota(\B)$.
It immediately follows that the result below is a theorem of ZFC:
\begin{lemma}\label{l:uniform intepretation and rigidity}
	For $\F,\F'$ as in Definition \ref{def:rigidity} we have: 
	 \begin{equation*}
	 	\F\leq^{\mathrm{ufi}}\F'\,\,\longrightarrow\,\,\left (\For(\F')\rightarrow\For(\F)\right).
	 \end{equation*}
\end{lemma}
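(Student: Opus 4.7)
The plan is to unpack the definition of $\leq^{ufi}$ and chase elementary equivalence through the interpretation, using faithfulness at the end. Assume $\F\leq^{ufi}\F'$ witnessed by a uniform interpretation $\iota$ consisting of parameter-free $\mathcal{S}$-formulae $\theta,\chi,\{\phi_i\}_{i\in I},\{\psi_j\}_{j\in J}$, and assume $\For(\F')$. I need to show $\For(\F)$, namely that any two elementarily equivalent $\A,\B\in\F$ are isomorphic.

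The first (and main) step is the standard observation that parameter-free uniform interpretations preserve elementary equivalence. Concretely, for each $\mathcal{S}'$-formula $\varphi(v_1,\ldots,v_n)$ one defines by recursion on complexity a translated $\mathcal{S}$-formula $\varphi^{\iota}(\bar v_1,\ldots,\bar v_n)$ (where each $\bar v_\ell$ is a $k$-tuple of $\mathcal{S}$-variables) by replacing each atomic relation $R_i$ with the corresponding $\phi_i$, each function symbol $f_j$ using $\psi_j$ in the usual graph-based manner, each quantifier $\exists v\,\eta$ with $\exists \bar v\,(\theta(\bar v)\wedge \eta^{\iota})$, and every equality with $\chi$. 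A routine induction on formula complexity shows that for every $\A\in\F$,
$$\iota(\A)\models \varphi([\bar a_1],\ldots,[\bar a_n]) \;\Longleftrightarrow\; \A\models \varphi^{\iota}(\bar a_1,\ldots,\bar a_n),$$
where $[\bar a]$ denotes the $\chi$-class of $\bar a$. Because the formulae $\theta,\chi,\phi_i,\psi_j$ do not involve any parameters, the translation $\varphi\mapsto \varphi^{\iota}$ depends only on $\iota$, not on $\A$. Specializing to sentences yields
$$\iota(\A)\models \varphi \;\Longleftrightarrow\; \A\models \varphi^{\iota},$$
uniformly in $\A\in\F$.

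With this in hand the remainder is immediate. Given $\A,\B\in\F$ with $\A\equiv\B$, and any $\mathcal{S}'$-sentence $\varphi$, we have $\iota(\A)\models\varphi$ iff $\A\models\varphi^{\iota}$ iff $\B\models\varphi^{\iota}$ iff $\iota(\B)\models\varphi$. Hence $\iota(\A)\equiv \iota(\B)$. Since $\iota(\A),\iota(\B)\in\F'$ and $\For(\F')$ holds, we conclude $\iota(\A)\cong\iota(\B)$. Faithfulness of $\iota$ then gives $\A\cong\B$, proving $\For(\F)$.

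The only step requiring genuine care is the translation lemma, and even that is entirely routine: one must verify that $\theta$ cuts out a nonempty domain, that $\chi$ is interpreted as a congruence for the $\phi_i$ and $\psi_j$ (both guaranteed by the hypothesis that $\iota(\A)$ is a well-defined $\mathcal{S}'$-structure), and that the inductive clauses for $\wedge,\neg,\exists$ go through. No set-theoretic assumptions beyond ZFC are used, in line with the remark preceding the lemma statement.
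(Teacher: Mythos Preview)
Your argument is correct and is exactly the approach the paper has in mind: the paper simply invokes as a ``standard fact'' that a parameter-free uniform interpretation preserves elementary equivalence and then declares the lemma to follow immediately, whereas you spell out the translation $\varphi\mapsto\varphi^{\iota}$ and the chain $\A\equiv\B\Rightarrow\iota(\A)\equiv\iota(\B)\Rightarrow\iota(\A)\cong\iota(\B)\Rightarrow\A\cong\B$ explicitly.
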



\section{First-order properties of homeomorphism groups and topological preliminaries}

In this section, we gather some background material about first-order rigidity
of homeomorphism groups of manifolds, as well as some topological tools adapted
to the study of noncompact manifolds embedded in Euclidean space.

\subsection{First-order theory of homeomorphism groups of manifolds}

Recall that all manifolds $M$ are
assumed to be connected, second countable, and Hausdorff, though not necessarily compact. To avoid some technical difficulties, we always assume $\partial M=\varnothing$. The second countability of $M$ is equivalent to $\sigma$--compactness of $M$, so that $M$ admits a countable exhaustion by compact submanifolds.

For compact manifolds $M$ (possibly with boundary), the first-order theory of
the homeomorphism group $\Homeo(M)$ in the language $\mL$
was developed extensively
in~\cite{KKdlN22}. Most importantly for our purposes, the content of
~\cite{KKdlN22} is that homeomorphism groups of compact
manifolds uniformly
interpret many auxiliary sorts together with definable predicates
relating them to each other and to the home sort, which are useful for
establishing rigidity results.

Suppose $M$ is a manifold of some dimension $d\geqslant 1$.
A homeomorphic embedding $\phi\colon B^d(1)\rightarrow M$ that extends to an embedding  $B^d(2)\to M$ is called a \emph{parametrized collared ball} in $M$;
we will simply call the image of $\phi$ a \emph{collared ball}.

A subset of a Polish space is definable (with parameters, generally)
in second-order arithmetic
if and only if it is projective, in the sense of descriptive set theory;
see~\cite{simpson-book}.

We will assume all abstract topological spaces to be at least Hausdorff.
In a topological space, an open set is called \emph{regular} if it is
the interior of its closure. The regular open sets of a topological
space $X$ are written $\ro(X)$, and form a Boolean algebra.
A \emph{regular compact set} is a compact set whose complement is
regular open. A compact set is regular if it is the closure of its
interior. The regular compact subsets of $X$ are written $\rc(X)$.

The following result summarizes much of the technical content of
~\cite{KKdlN22}. We briefly fix terminology. By an \emph{imaginary sort}
of a structure $A$ we mean a definable quotient $X/E$, where
$X\subseteq A^n$ is definable and $E$ is a definable equivalence relation
on $X$. For each imaginary sort listed below, we say that the sort is
\emph{uniformly interpretable} in $\Homeo(M)$ if there are formulae in the
language $\mL$ defining $X$, $E$, and all relevant relations, and these
formulae do not depend on $M$; that is, the manifold $M$ may be treated as a
variable, and the same formulae define the corresponding sort for each
manifold. All definitions and interpretations are parameter-free, unless
stated otherwise. The term uniform(ly), present in isolation, is always to be
understood as ``independent(ly) of the manifold $M$". 

\begin{thm}[See~\cite{KKdlN22}]\label{thm:kkdln}
    Let $M$ be an arbitrary compact, connected manifold,
    possibly with boundary, and let $\Homeo_0(M)\leqslant G\leqslant\Homeo(M)$. Several collections of objects derived from $M$ can be shown to  admit a canonical bijection with a series of imaginary sorts in $G$, as a structure in the language of groups. We also claim that certain predicates defined over said derived data are definable, i.e., definable modulo the canonical identification in question.  
    \begin{enumerate}
        \item There is a uniform imaginary sort in canonical bijection with the collection $\ro(M)$ of regular open
        subsets of $M$. Moreover, there are uniformly definable predicates encoding:
        \begin{enumerate}
            \item The Boolean algebra structure on $\ro(M)$;
            \item The action structure on $\ro(M)$; that is, the graph of the map $act:G\times \ro(M)\longrightarrow \ro(M)$ mapping
            $(g,U)$ to $g\cdot U$. 
        \end{enumerate}
        \item There is a uniform imaginary sort admitting a canonical
        bijection with the natural numbers $\N$, and another one admitting a canonical bijection with the collection $2^{\N}$ of subsets of $\N$, so that the following predicates in the sorts above are uniformly definable:
        \begin{enumerate}
            \item the graph of the arithmetic operations $+,\times$ on
            $\mathbb{N}$;
            \item the order on $\mathbb{N}$;
            \item the membership predicate $\in\; \subseteq\mathbb{N}\times 2^{\mathbb{N}}$;
            \item the collection of $(n,U)\in\mathbb{N}\times \ro(M)$ for which $U$ has $n$ connected components.
        \end{enumerate}
        \item There is a uniform imaginary sort in canonical bijection with the set of points of $M$ with respect to which the membership predicate $[\mathbf{\in}']\subseteq M\times \ro(M)$ and the predicate of the action of $G$ on $M$ are uniformly definable. 
        \item There is a uniform imaginary sort
        whose elements are in canonical bijection with the collection $\mathcal P^{<\omega}(M)$ of finite sequences of points in $M$. Moreover, the predicate $\in(\pi,k,U)$, that holds precisely when the $k^{th}$ term of
        $\pi\in\mathcal P^{<\omega}(M)$ lies in $U\in\ro(M)$ is uniformly definable.
        \item 
        The collection of tuples $(g,n,U,V)\in G\times\mathbb{N}\times \ro(M)^{2}$
        for which $g^n(U)=
        V$ is uniformly definable. The same holds if
         $U$ and $V$ are replaced by
        points in $M$.
        \item The set of pairs $(p,U)\in M\times\ro(M)$ for which $p\in M$ lies in the closure
        of $U\in\ro(M)$ is uniformly definable.
        \item For each $d\in\N$, there is a sentence $\dim_d$ such that
        $G\models\dim_d$ if and only if $M$ has dimension $d$.
        \item For each fixed dimension $d$, there is an imaginary sort $\rc(M)$, uniform on all $d$-dimensional manifolds $M$,
        in canonical bijection with the regular compact
        subsets of $M$. Moreover, via this bijection, the action
        structure on $\rc(M)$ is given by uniformly definable 
        predicates.
        \item For each fixed dimension $d$, there is an imaginary sort $\mathrm{ball}_{s}(M)$, uniform on all $d$-dimensional manifolds $M$, which
        is in canonical bijection with parametrized collared balls
        $\beta\colon B^d(1)\rightarrow M$. Moreover, the following predicates are uniformly definable for fixed $d$: 
        \begin{enumerate}
            \item The collection $\mathrm{open-ball}\subseteq\ro(M)$ of all $U$ which are the interior of some
            collared ball $\beta(B)$ in $M$.
            \item The collection $\mathrm{closed-ball}(C)\subseteq\rc(M)$ of sets of the form $\beta(B)$ for some
            collared ball in $M$.
            \item The collection $\mathrm{param}$ of all $(\beta,C,q,p)\in\mathrm{ball}_{s}\times\rc(M)\times\mathcal{P}(M)^{2}$ such that:
            \begin{enumerate}
                \item $C=\beta(B)$;
                \item $q\in B\cap \Q^d$, or more generally $q$ is 
                projectively definable in $B$;
                \item $p\in C$;
                \item $\beta(q)=p$.
            \end{enumerate}
        \end{enumerate}
    \end{enumerate}
\end{thm}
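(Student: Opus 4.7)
The plan is to invoke the Rubin-style reconstruction machinery developed in \cite{KKdlN22}, specialized to connected manifolds, and sketch how each imaginary sort is uniformly extracted from the group structure. The foundational step is interpreting $\ro(M)$ via Rubin's theorem for locally moving groups: since $\Homeo(M)$ is locally moving (every nonempty regular open set supports a nontrivial homeomorphism), there is a uniform group-theoretic formula $\phi_\perp(g,h)$ expressing disjointness of supports, from which one defines an equivalence relation whose classes correspond to elements of $\ro(M)$. The Boolean operations on $\ro(M)$ and the action of $\Homeo(M)$ are then immediately definable from commutation and conjugation. Uniformity in $M$ is automatic because $\phi_\perp$ involves no parameters tied to any specific manifold, and the argument works equally well for the intermediate subgroups $\Homeo_0 \leq G \leq \Homeo(M)$ because local-moving is inherited.

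Building on $\ro(M)$, I would interpret the remaining sorts in succession. A copy of $\N$ appears inside $\ro(M)$ by fixing a definable wandering orbit of a group element, producing a sequence of pairwise disjoint regular open sets; the arithmetic operations, order, and connected-components counting predicate are then first-order-expressible using Boolean combinations and orthogonality. Subsets of $\N$ are coded by joins of subfamilies, yielding the sort $2^{\N}$. Points of $M$ are interpreted as equivalence classes of filters on $\ro(M)$ that converge, with membership $p \in' U$ and the action $g \cdot p$ transported from the $\ro(M)$ data. Finite sequences of points, closure predicates $p \in \overline{U}$, and dynamical predicates $g^n(U) = V$ or $g^n(p) = q$ then follow once $\N$ and the point sort are available, using standard tupling.

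The final layer concerns dimension, regular compact sets, and parametrized collared balls. The sentence $\dim_d$ is obtained by recognizing, at the group level, the topological signature of a $d$-dimensional collared ball (for instance, via the presence of stabilizer subgroups with the group-theoretic pattern of compactly supported homeomorphisms of $B^d(1)$); this characterization is first-order because ``being a collared ball'' has a purely group-theoretic encoding in terms of the action on $\ro(M)$. The sort $\rc(M)$ is dual to $\ro(M)$ through the definable interior/closure correspondence, so its action structure is inherited. For $\mathrm{ball}_s(M)$, a parametrization is pinned down by distinguishing the images of a countable projectively definable dense set inside $B^d(1)$ (for example $\Q^d \cap B^d(1)$), using rigidifying auxiliary group elements such as reflections and periodic maps to kill residual symmetries; the \emph{param} predicate then records the coherent identification of these images with rational points. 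The main obstacle I expect is genuine uniformity in $M$: none of the formulas may secretly depend on a choice of exhausting compact subsets, chart atlas, or cardinality parameters of $M$. Verifying this uniformity, particularly for $\mathrm{ball}_s$ where one needs a canonical, formula-definable parametrization rather than one made up to nondefinable ambiguity, is where the bulk of the technical effort in \cite{KKdlN22} concentrates and where I would expect the serious work to lie.
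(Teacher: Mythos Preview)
The paper does not give its own proof of this theorem; it is stated as a summary of results from \cite{KKdlN22} (and \cite{KKdlN2025}), and the subsequent paragraphs only remark on which items carry over to the noncompact case. There is therefore no in-paper argument to compare your proposal against.

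Your outline is broadly in line with the architecture of the cited work: starting from Rubin's locally moving group machinery to interpret $\ro(M)$, then layering on $\N$, $2^\N$, points, and finally the dimension-dependent sorts. A couple of your intermediate descriptions are imprecise enough that they would not stand as proofs on their own---for instance, your account of $\dim_d$ via ``stabilizer subgroups with the group-theoretic pattern of compactly supported homeomorphisms of $B^d(1)$'' is not a first-order condition without substantial further work, and interpreting $\N$ via a ``definable wandering orbit'' presupposes you can definably single out such an element, which is itself nontrivial---but you correctly identify the parametrized-ball sort and its uniformity as the place where the real technical content lies. As a plan pointing to \cite{KKdlN22} for the details, this is adequate.
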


Some remarks are in order. First, in the proofs of the items
in Theorem~\ref{thm:kkdln} as given in~\cite{KKdlN2025}, many of the interpretations rely essentially
on the compactness of the underlying manifold $M$, though many do not. 
For the regular open
sets $\ro(M)$ and the action of $\Homeo(M)$ on
$\ro(M)$, the interpretation
can be carried out in a much more general context of locally moving actions
(see~\cite{rubin-short,Rubin1989,Rubin1996}),
and so there is no difficulty replacing $M$ by a
noncompact manifold for interpreting regular open sets. It follows
that regular closed sets are immediately interpretable in
$\Homeo(M)$. The interpretation of $\N$ and the attendant predicates
can be carried out in
any compact set with compact closure in $M$, even when $M$ is compact.
Similar considerations apply to
the interpretations of points. For dimension, since $M$ is connected,
we have that the dimension of $M$ coincides with the dimension of any
nonempty open subset of $M$. In particular, one can express the dimension
of a manifold from a nonempty open set $U$ contained in a collared open
ball; a similar consideration applies to parametrized collared balls in
$M$.

For interpreting regular compact sets when $M$ is
not necessarily compact,
some further steps are necessary. The following proposition is easy,
and follows from straightforward dimension theory of manifolds and
transitivity of the action of the homeomorphism group of a manifold
on collared balls;
see~\cite{KKdlN22,KdlN2023,KdlN2024,LeRoux2014}:
\begin{prop}\label{prop:compact-submanifold}
    Let $M$ be an arbitrary connected manifold of dimension $d$,
    let $\varnothing\neq U\in\ro(M)$, and
    let $K\subseteq M$ be arbitrary.
    \begin{enumerate}
        \item The closure of $U$ lies inside a collared ball
        in $M$ if and only if, for all nonempty $W\in\ro(M)$,
        there is
        a $g\in\Homeo(M)$ such that $g(U)\subseteq W$.
        \item The subset $K$ lies in a compact submanifold of $M$ if
        and only if there exists a finite collection
        $\{U_1,\ldots,U_m\}\subseteq \ro(M)$ such that each point
        $p\in K$ lies in some $U_i$, and such that each $U_i$
        has closure contained in a collared ball in $M$.
        \item Conversely, if $N\subseteq M$ is a compact,
        connected submanifold,
        then there is an $m\in\N$
        depending only on the dimension $d$ of
        $M$ such that $N$ is contained in the union of regular open sets
        $\{U_1,\ldots,U_m\}\subseteq \ro(M)$, each of which has
        closure contained in a collared ball in $M$.
    \end{enumerate}
\end{prop}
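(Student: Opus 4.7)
The plan is to prove the three parts in sequence, using as the main tool the transitivity of $\Homeo(M)$ on parametrized collared balls—a standard fact for connected manifolds. Throughout I will use that a collared ball $C$ is regular compact, so $\overline{\inte(C)}=C$, and that the image of a collared ball under any element of $\Homeo(M)$ is again a collared ball (pre/post-composing the parametrizing embedding with $g$ preserves the extension to $B^d(2)$).

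For part (1), the forward direction proceeds as follows: given $\overline{U}\subseteq C$ for a collared ball $C$ and any nonempty $W\in\ro(M)$, pick a parametrized collared ball $C'$ with $C'\subseteq W$ (possible since $W$ is nonempty open in the locally Euclidean $M$), and, by transitivity, pick $g\in\Homeo(M)$ with $g(C)=C'$; then $g(U)\subseteq g(\overline U)\subseteq C'\subseteq W$. For the reverse direction, apply the hypothesis to $W=\inte(C_0)$ for a collared ball $C_0$: the resulting $g$ satisfies $g(\overline U)=\overline{g(U)}\subseteq\overline{\inte(C_0)}=C_0$, so $\overline U\subseteq g^{-1}(C_0)$, itself a collared ball.

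Part (2) is similar in spirit. The forward direction uses that each point of the compact submanifold $N$ lies in the interior of some collared ball of $M$; compactness extracts a finite subcover of $N$ (and hence of $K$) by such interiors, which are regular open and have closures contained in collared balls. The reverse direction observes that $\bigcup_i\overline{U_i}\subseteq\bigcup_iC_i$ is compact, and then invokes the standard fact that every compact subset of a manifold sits inside a compact submanifold—e.g.\ by regular-neighborhood constructions, or via sublevel sets at regular values of a smooth bump function.

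Part (3) is where I expect the real difficulty, since the bound $m$ must depend only on $d$, uniformly in $N$ and its embedding. My plan combines two ingredients. First, compact $d$-manifolds have Lusternik--Schnirelmann category at most $d+1$, so $N$ admits a cover by $d+1$ open subsets each contractible in $N$. Second, the Lebesgue covering dimension of $M$ is $d$, so any fine open cover of $N$ by small collared balls of $M$ (which exists by compactness) has an open refinement of multiplicity at most $d+1$, and this refinement can be $(d+1)$-colored into pairwise disjoint families. The main obstacle is then to upgrade the output of these two arguments to what the statement requires: contractibility in $N$ is a priori strictly weaker than being enclosed in a single collared ball of $M$, and a disjoint union of small collared balls of $M$ need not fit inside one larger collared ball (indeed on a high-genus ambient surface, disjoint disks separated by nontrivial loops cannot be engulfed by one disk). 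Bridging this gap—the consolidation step, in which each color class of disjoint small balls is grouped into a bounded number of regular open sets each with closure in a single collared ball—is, I expect, the technical heart of the argument, and will probably require careful use of a handle decomposition of $N$ together with the flexibility provided by the action of $\Homeo(M)$ on parametrized collared balls.
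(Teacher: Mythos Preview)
Your arguments for parts (1) and (2) are correct and match the paper's indicated approach. Note that the paper does not actually supply a proof: it simply asserts that the proposition ``is easy, and follows from straightforward dimension theory of manifolds and transitivity of the action of the homeomorphism group of a manifold on collared balls,'' deferring to the cited references. In that sense you have already written more than the paper does.

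For part (3) your ingredient is the right one---Ostrand-type dimension theory produces $d+1$ families of pairwise disjoint small balls whose union covers $N$---but the obstruction you pose to the consolidation step is not genuine. Your claimed counterexample, that on a high-genus surface disjoint disks ``separated by nontrivial loops'' cannot be engulfed by a single disk, is false. In any \emph{connected} manifold, a finite disjoint collection of collared balls can always be engulfed by one collared ball: join them by locally flat embedded arcs (available by connectedness, meeting the balls only in their endpoints) to form a tree; a regular neighborhood of such a tree is an iterated boundary connected sum of $d$-balls along $(d-1)$-disks, hence again a $d$-ball. The engulfing ball may overlap the other color classes and may extend beyond $N$ into $M$, but this is harmless, since the statement requires only that the $U_i$ cover $N$ and that each $\overline{U_i}$ lie in a collared ball of $M$, not that the $U_i$ be pairwise disjoint or contained in $N$. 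With this observation $m=d+1$ suffices, and no handle decomposition of $N$ is needed.
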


It follows from Proposition~\ref{prop:compact-submanifold} that there is 
a uniform imaginary sort in canonical bijection with  $\rc(M)$ so that the relevant predicates are all uniformly definable
in $\Homeo(M)$ by $\mL$--formulae, provided that the dimension of $M$
is a fixed value $d$.

The uniform interpretation of $\N$ and the related predicates
means that definably presented Polish spaces, the Borel hierarchy, and the projective
hierarchy, are all uniformly interpretable across homeomorphism
groups of manifolds. Thus, all homeomorphism groups of
manifolds ``agree" on what $B$ is, what Euclidean spaces are,
what continuous functions are, etc.

\begin{cor}\label{cor:noncompact-interp}
    The conclusions of Theorem~\ref{thm:kkdln} hold for arbitrary
    connected manifolds.
\end{cor}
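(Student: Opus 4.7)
The plan is to revisit each clause of Theorem~\ref{thm:kkdln} in the noncompact setting, observing that the content of nearly every clause is either local or can be confined to a compact region of $M$, so that the interpretations of~\cite{KKdlN22} transfer essentially verbatim. The one serious new ingredient needed is a first-order characterization of the sort $\rc(M)$, and this is precisely what Proposition~\ref{prop:compact-submanifold} is designed to provide.

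For clauses (1)--(7), the remarks following Theorem~\ref{thm:kkdln} already lay out why no modification is required. The interpretation of $\ro(M)$ with its Boolean algebra and group action rests on Rubin's general framework for locally moving actions, which is insensitive to compactness. The sorts $\N$, $2^{\N}$, points, and finite sequences of points, together with their arithmetic, order, membership, and action predicates, can all be set up inside a single collared ball of $M$ and then translated throughout $M$ by the $\Homeo(M)$-action. The dimension of $M$ is detected on any nonempty open subset, so the sentence $\dim_d$ transfers unchanged.

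The heart of the argument is clause (8). Fix the dimension $d$. By part (1) of Proposition~\ref{prop:compact-submanifold}, the property that $U\in\ro(M)$ has closure contained in a collared ball is captured by the $\mL$-formula asserting that every nonempty $W\in\ro(M)$ admits some $g\in\Homeo(M)$ with $g(U)\subseteq W$; call such $U$ \emph{small}. By parts (2)--(3) of the same proposition, together with the fact that any compact subset of a connected manifold sits inside a compact connected submanifold (via $\sigma$-compactness and a connecting-arcs argument), there is an integer $m=m(d)$ depending only on $d$ such that a regular closed set $C\subseteq M$ is compact if and only if there exist small $U_1,\dots,U_m\in\ro(M)$ with $C\subseteq\overline{U_1\cup\dots\cup U_m}$. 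This is a uniform first-order condition that cuts $\rc(M)$ out of the already-interpreted sort of regular closed sets, and the action predicates on $\rc(M)$ transfer from those on $\ro(M)$ because compactness is homeomorphism-invariant.

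Once clause (8) is in place, clause (9) follows by transcribing the constructions of~\cite{KKdlN22}: parametrized collared balls and the predicates $\mathrm{open-ball}$, $\mathrm{closed-ball}$, and $\mathrm{param}$ are defined purely in terms of $\ro(M)$, $\rc(M)$, the sort of points, and the group action, each of which is now uniformly interpretable. The main anticipated obstacle is the internal step just described: ensuring that the reduction from arbitrary compact subsets to compact connected submanifolds can be done with a bound depending only on the dimension $d$, so that the resulting defining formula for $\rc(M)$ is genuinely parameter-free and uniform across all connected $M$ of dimension $d$.
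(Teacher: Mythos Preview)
Your proposal is correct and follows essentially the same approach as the paper: the paper's argument consists precisely of the remarks following Theorem~\ref{thm:kkdln} (handling clauses (1)--(7) and (9) via locality and Rubin's framework) together with Proposition~\ref{prop:compact-submanifold} for clause (8), and you have spelled out in greater detail how the bound $m=m(d)$ from part (3) of that proposition yields a uniform definition of $\rc(M)$. The bridging observation you make---that an arbitrary compact subset sits inside a compact connected submanifold---is implicit in the paper's use of Proposition~\ref{prop:compact-submanifold} and is exactly the right way to close the gap.
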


A further sort which is uniformly interpretable for manifolds of fixed
dimension $d$ is that of sequences of points; this interpretation
was carried out in~\cite{KdlN2023}:

\begin{prop}\label{prop:interp-seq}
    Let $d$ be fixed and $M$ be a compact manifold of dimension $d$. Given a group \[\Homeo_0(M)\leqslant G\leqslant\Homeo(M),\] there is a uniform imaginary sort $\mathrm{seq}(M)$, which admits a canonical bijection with the collection of countable sequences of points in $M$. Moreover, for fixed $d$ the set of pairs \[(p,k,\sigma)\in M\times \N\times \mathrm{seq}(M)\] for which $p$ is the $k^{th}$ term of $\sigma$ is uniformly definable.
\end{prop}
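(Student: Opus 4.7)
The strategy is to piggyback on the uniform interpretation of second-order arithmetic provided by Theorem~\ref{thm:kkdln}(2), combined with the sort $\mathrm{ball}_s(M)$ of parametrized collared balls and the $\mathrm{param}$ predicate from part~(9). A single point of $M$ can be captured by a chart $\phi\in\mathrm{ball}_s(M)$ together with $d$ local coordinates, each representable as a Cauchy sequence of rationals and hence as an element of $2^\N$. Since $\Homeo(M)$ uniformly interprets $(\N,2^\N,\in)$, countably many such data items can be aggregated into a single element of $2^\N$ via a definable pairing, provided we work relative to an atlas of uniformly bounded size.

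The first step is to apply Proposition~\ref{prop:compact-submanifold}(3) with $N=M$, yielding a constant $K=m(d)$ depending only on the dimension such that every compact connected $d$-manifold admits a cover by $K$ collared balls. I then let $\mathrm{seq}(M)$ be the quotient of
\[
\tilde S=\MGD{(\phi_1,\ldots,\phi_K,\xi)\in\mathrm{ball}_s(M)^K\times 2^\N}{\{\phi_i(B^d)\}\text{ covers }M\text{ and }\xi\text{ is well-formed}}
\]
by the equivalence relation that two tuples encode the same sequence of points. ``Well-formedness'' means that under a fixed definable pairing $\N\times\N\to\N$, the decoding of $\xi$ produces, for each $n\in\N$, a pair $(i_n,x_n)\in\{1,\ldots,K\}\times B^d$; the $n$-th term of the encoded sequence is then $\phi_{i_n}(x_n)$. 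Both the validity condition on $\tilde S$ and the equivalence relation are uniformly first-order in $\Homeo(M)$ via the $\mathrm{param}$ predicate from Theorem~\ref{thm:kkdln}(9)(c) combined with a universal quantifier over the interpreted $\N$.

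The membership predicate $\mathrm{member}(p,k,\sigma)$ for $(p,k,\sigma)\in M\times\N\times\mathrm{seq}(M)$ is then defined as follows: given a representative $(\vec\phi,\xi)$ of $\sigma$, uniformly extract the pair $(i_k,x_k)$ from $\xi$ and $k$, and assert $\mathrm{param}(\phi_{i_k},\phi_{i_k}(\overline B),x_k,p)$, noting that $x_k$ is projectively definable in $B^d$ from the parameters $\xi,k$ per Theorem~\ref{thm:kkdln}(9)(c)(ii). The main technical hurdle is verifying that the decoding operations---reading off the $n$-th pair from $\xi$, recognizing the coordinates as Cauchy sequences, and computing their limits as projectively definable points of $B^d$---are uniformly first-order in $\Homeo(M)$. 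This amounts to routine manipulation inside the interpreted structure $(\N,2^\N,+,\times,<,\in)$, which by Theorem~\ref{thm:kkdln}(2) is itself uniform in $M$, so no genuine obstacle arises beyond careful bookkeeping.
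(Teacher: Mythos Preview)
The paper does not prove this proposition in the text; it is imported from~\cite{KdlN2023}. Your argument is correct and is the natural way to carry out the construction with the tools summarized in Theorem~\ref{thm:kkdln}: bound the atlas size by a constant $K=K(d)$ via Proposition~\ref{prop:compact-submanifold}(3), code a sequence by an atlas $(\phi_1,\dots,\phi_K)$ together with a single $\xi\in 2^\N$ packing chart indices and local coordinates, and recover the $k$-th term through $\mathrm{param}$.

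Two points deserve to be made explicit. First, you are invoking $\mathrm{param}$ as a single uniformly definable $4$-ary relation whose third argument ranges over the interpreted sort of points of $B$ (Cauchy sequences coded in the interpreted $2^\N$), not merely as a family of relations indexed by fixed projectively definable $q$'s. The statement of Theorem~\ref{thm:kkdln}(9)(c) is loose here---the type declaration $\mathcal P(M)^2$ for $(q,p)$ is a slip, since $q$ lives in $B$---but the uniform reading is the intended one and is in any case forced by the constructions in Section~4. Second, you omit the definability of the covering condition on $(\phi_1,\dots,\phi_K)$; this reduces to
\[
\forall p\in M\;\exists q\in B\;\bigvee_{i\leqslant K}\mathrm{param}(\phi_i,\phi_i[B],q,p)
\]
and is immediate once $\mathrm{param}$ is available. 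With these clarifications, the rest is indeed routine bookkeeping inside the interpreted second-order arithmetic.
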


Proposition~\ref{prop:interp-seq} is generalized to noncompact
manifolds below in Corollary~\ref{cor:noncompact-sequence}.

Let $X$ be a fixed separable Hausdorff
topological space that is parameter-free interpretable
in second-order arithmetic; that is to say, the points
$\mathcal P(X)$ and open sets $\mathcal U$
can be canonically identified with imaginary sorts in second-order arithmetic, so that the inclusion relation
$p\in U$
of a point in an open set is uniformly definable; for instance, $X$ may be 
a definably presented Polish space. Then, homeomorphisms of $X$ are determined
by their values on a dense subset of $X$, and countable sequences of
points in $X$ are a definable sort in second-order arithmetic.
Combining with Proposition~\ref{prop:interp-seq}, there is a uniformly
definable sort $\mathrm{seq}(M\times X)$ in canonical bijection with
countable sequences in $M\times X$, together with a uniformly definable
predicate defining the $k^{th}$ term of a sequence, for all
$k\in \N$.

It is not difficult, from direct access to regular open sets in $M$
and the topology of $X$, to construct definable predicates which
define sequences in $M\times X$ encoding graphs of homeomorphisms
of $M\times X$. This was done explicitly for compact manifolds in~\cite{KdlN2023},
and we spell out some details in Corollary~\ref{cor:noncompact-sequence} below
in the case of general connected boundaryless manifolds.

\subsection{Dynamics of homeomorphisms of noncompact manifolds}

Crucial to our investigation of noncompact manifolds and encoding
such manifolds with finite data is the notion of a \emph{topologically
transitive} homeomorphism. A homeomorphism $g$ of a topological space
$X$ is topologically transitive if for all nonempty open $U,V\subseteq X$,
there exists an $n\in\Z$ such that $g^n(U)\cap V\neq\varnothing$.
Clearly there are no topologically transitive homeomorphisms of $\R$,
though in dimension two or more, topologically transitive homeomorphisms
always exist (see~\cite{alpern-prasad}):
\begin{thm}\label{thm:top-trans}
    Let $M$ be a connected $\sigma$--compact
    manifold of dimension at least two. Then $M$ admits a topologically
    transitive homeomorphism.
\end{thm}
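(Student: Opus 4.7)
The plan is to realize the transitive homeomorphism $g$ as a generic element, in the Baire-category sense, of the Polish group of $\mu$-preserving homeomorphisms of $M$, following the approach of Alpern--Prasad. First I will fix an exhaustion $K_1 \subseteq K_2 \subseteq \cdots$ by compact connected codimension-zero submanifolds of $M$ with $K_n \subseteq \inte(K_{n+1})$ and $\bigcup_n K_n = M$, a countable basis $\{V_i\}_{i \in \bN}$ for the topology of $M$, and an Oxtoby--Ulam measure $\mu$ on $M$, that is, a nonatomic Borel measure that is finite on each $K_n$, positive on every nonempty open set, and assigns measure zero to each topological boundary $\partial K_n$. Writing $\mathcal G = \Homeo(M,\mu)$ for the measure-preserving homeomorphisms, equipped with the Polish topology of uniform convergence of $g$ and $g^{-1}$ on compact sets, the topologically transitive elements form the $G_\delta$ subset
\[
\mathcal T = \bigcap_{i,j \in \bN} \bigcup_{n \in \bZ}\{ g \in \mathcal G : g^n(V_i) \cap V_j \neq \varnothing \}.
\]

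The main step is to show that for each pair $(V_i, V_j)$ the corresponding open set $\mathcal A_{ij} = \bigcup_n \{ g \in \mathcal G : g^n(V_i) \cap V_j \neq \varnothing \}$ is dense in $\mathcal G$. Given any $g_0 \in \mathcal G$ and any basic open neighborhood of $g_0$ demanding agreement with $g_0$ up to some $\varepsilon$ on some $K_N$, I will enlarge $N$ if necessary so that $K_N$ contains both $V_i$ and $V_j$, and then localize the problem to $K_N$: it suffices to produce a $\mu$-preserving homeomorphism $h$ of $K_N$ that is the identity near $\partial K_N$, $C^0$-close to the identity, and such that $(h \circ g_0)^n(V_i) \cap V_j \neq \varnothing$ for some $n$. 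Extending $h$ by the identity outside $K_N$ gives the required perturbation of $g_0$, and Baire category applied to $\bigcap_{i,j} \mathcal A_{ij}$ then produces a transitive $g \in \mathcal T$.

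The main obstacle is the construction of $h$, which is where the hypothesis $d \geqslant 2$ enters essentially. Decompose a relevant subregion of $K_N$ into dyadic cells of equal $\mu$-mass, fine enough that one cell lies inside $V_i$ and another inside $V_j$. The classical cell-permutation lemma of the Oxtoby--Ulam school says that in dimension $d \geqslant 2$ any permutation of a dyadic partition of a cube can be realized by a $\mu$-preserving homeomorphism supported in the cube and equal to the identity on its boundary; this fails in dimension one because monotone homeomorphisms of an interval fixing its endpoints cannot permute subintervals. Applying this lemma to a permutation that, composed with the orbit structure of $g_0$, routes a cell of $V_i$ into $V_j$ after some number of iterations produces the desired $h$. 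Choosing the grid fine enough makes $h$ arbitrarily $C^0$-small, completing both the density step and the argument.
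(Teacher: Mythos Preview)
The paper does not give its own proof of this theorem; it simply quotes the result from Alpern--Prasad and moves on. Your sketch is therefore not a reconstruction of the paper's argument but an outline of the cited reference, and at that level the framework is correct: the Baire-category setup in $\Homeo(M,\mu)$, the $G_\delta$ description of $\mathcal T$, and the identification of the cell-permutation lemma as the place where $d\geqslant 2$ enters are exactly the Oxtoby--Ulam/Alpern ingredients.

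There is, however, a genuine gap in your density step when $M$ is noncompact. You reduce to finding $h$ supported in $K_N$, $C^0$-close to the identity, with $(h\circ g_0)^n(V_i)\cap V_j\neq\varnothing$; but iterates of $h\circ g_0$ may leave $K_N$, where $h$ acts trivially, and the dynamics of $g_0$ outside $K_N$ then decide whether the orbit ever returns. Your hypotheses on $\mu$ allow infinite total mass, in which case there is no Poincar\'e recurrence to force return; and even with a finite measure, a single $h$ that is $\varepsilon$-close to the identity and supported in $K_N$ does not obviously steer a recurrent $g_0$-orbit from $V_i$ into $V_j$ by composing one cell permutation with ``the orbit structure of $g_0$'' as you suggest. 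In the compact case your sketch is essentially the classical argument (one approximates $g_0$ itself by a cyclic dyadic permutation, which is the same as your $h\circ g_0$), but Alpern and Prasad treat the $\sigma$-compact case with additional machinery---a finite OU measure together with a more careful Rokhlin-type approximation compatible with the end structure---and that extra work is precisely what your ``localize to $K_N$'' step elides.
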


For us, topologically transitive homeomorphisms are a parameter-free
definable subset of $\Homeo(M)$.

\begin{prop}\label{prop:top-trans-def}
    The set $\mathcal T\subseteq\Homeo(M)$ consisting of topologically
    transitive homeomorphisms is uniformly definable in the language
    $\mL$ without parameters.
\end{prop}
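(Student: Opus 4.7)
The plan is to unwind the definition of topological transitivity and show that it can be expressed in the language $\mL$ using only the uniformly interpretable sorts and predicates furnished by Theorem~\ref{thm:kkdln} together with Corollary~\ref{cor:noncompact-interp}. Recall that $g\in \Homeo(M)$ is topologically transitive if for all nonempty open $U,V\subseteq M$, there exists $n\in \Z$ with $g^n(U)\cap V\neq \varnothing$.

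First, I would reduce from arbitrary nonempty open sets to nonempty regular open sets. In a manifold, every nonempty open set contains a nonempty open ball (sitting inside a chart), and every such ball is a regular open set, so every nonempty open set contains a nonempty element of $\ro(M)$. Consequently, topological transitivity of $g$ is equivalent to the assertion that for every pair of nonempty $U,V\in \ro(M)$, there exists $n\in \Z$ with $g^n(U)\cap V\neq \varnothing$. Moreover, for any regular open sets $A,B$ the ordinary intersection $A\cap B$ is open, and so $A\cap B=\varnothing$ iff $\inte(\cl(A\cap B))=\varnothing$; since the latter coincides with the Boolean meet $A\wedge B$ in $\ro(M)$, nonemptiness of intersections is expressible using the Boolean algebra structure.

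Next I would assemble the defining formula. By parts (1) and (2) of Theorem~\ref{thm:kkdln}, together with Corollary~\ref{cor:noncompact-interp}, the sort $\ro(M)$ is uniformly interpretable in $\Homeo(M)$, as is its Boolean algebra structure and the action predicate, and the sort $\N$ with its arithmetic is uniformly definable. By part (5) of Theorem~\ref{thm:kkdln}, the predicate $\mathrm{iter}(g,n,U,W)$ expressing $g^n(U)=W$ for $n\in\N$ and $U,W\in \ro(M)$ is uniformly definable. Putting these together, the statement ``there exists $n\in \N$ with $g^n(U)\cap V\neq\varnothing$'' is expressed by
$$
\exists n\in \N\,\exists W\in \ro(M)\,\bigl(\mathrm{iter}(g,n,U,W)\wedge (W\wedge V)\neq 0\bigr),
$$
and passing to $\Z$-exponents is handled by disjoining the same formula applied to $g^{-1}$. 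Hence topological transitivity of $g$ is expressed by the parameter-free $\mL$-formula asserting that for all nonzero $U,V\in \ro(M)$, one of these two conditions holds; the formula depends only on the uniform interpretations and therefore does not depend on $M$.

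The only real obstacle is conceptual rather than technical: one must verify that the standard topological notion of transitivity is captured faithfully after passing to $\ro(M)$ and replacing ordinary intersection by Boolean meet. Both reductions are handled by the elementary observations above (ball interiors give nonempty regular open subsets of any nonempty open set, and the Boolean meet of regular open sets agrees with $\inte(\cl(\cdot))$ of their intersection). Once these are in hand, the formula is a straightforward transcription, and uniformity is automatic since every ingredient is uniformly interpretable.
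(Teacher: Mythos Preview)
Your argument is correct and follows essentially the same approach as the paper: reduce to nonempty elements of $\ro(M)$ and transcribe the transitivity condition using the uniformly interpretable sorts from Theorem~\ref{thm:kkdln} and Corollary~\ref{cor:noncompact-interp}. The only cosmetic difference is that the paper witnesses $g^n(U)\cap V\neq\varnothing$ via the point sort (there is $p\in U$ with $g^n(p)\in V$), whereas you do it via the Boolean meet in $\ro(M)$; both are equally valid and equally uniform.
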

\begin{proof}
    We simply define $\mathcal T$ to consist of all $g\in\Homeo(M)$ such
    that for all nonempty $U,V\in\ro(M)$, there exists a $p\in U$ and an
    $n\in\Z$ such that $g^n(p)\in V$. That these conditions are first-order
    expressible follows from Theorem~\ref{thm:kkdln} and
    Corollary~\ref{cor:noncompact-interp}.
\end{proof}

For us, topologically transitive homeomorphisms of a manifold $M$ are
merely a means to an end. For an open set $U\subseteq M$ and a
homeomorphism $g\in \Homeo(M)$, we write
\[M_{g,U}=\bigcup_{i\in\Z}g^i(U).\]
If $M_{g,U}$ is a dense subset of
$M$ then we say that $M_{g,U}$ is a \emph{$g$--dense} submanifold of
$M$ (or just a \emph{dense} submanifold if $g$ and $U$ are not relevant
for the discussion), and we say that $g$ is 
\emph{sufficiently transitive}.
We have the following, which can be proved the same way
as Proposition~\ref{prop:top-trans-def}:

\begin{prop}\label{prop:g-dense}
    The set $\mathcal G(M)$ consisting of all $(g,U)\in \Homeo(M)\times\ro(M)$ such that
\[\begin{split}
    U \text{ is the interior of a collared ball} \quad\&\quad M_{g,U} \text{ is $g$-dense}
\end{split}\]
    is uniformly definable without parameters. Furthermore, for all connected, boundaryless manifolds $M$, we have $\mathcal G(M)\neq \emptyset$.
\end{prop}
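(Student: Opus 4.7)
The proposition has two assertions---uniform definability of $\mathcal{G}(M)$ without parameters, and non-emptiness of $\mathcal{G}(M)$ for every connected boundaryless manifold---which I would treat separately.

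For \emph{definability}, the plan is to unwind the two defining conditions into a single parameter-free $\mL$-formula using the sorts and predicates that Theorem~\ref{thm:kkdln}, extended to noncompact manifolds by Corollary~\ref{cor:noncompact-interp}, uniformly interprets in $\Homeo(M)$. The condition that $U \in \ro(M)$ is the interior of a collared ball is literally the predicate $\mathrm{open\text{-}ball}(U)$ of item~(9)(a) of Theorem~\ref{thm:kkdln}. For the density of $M_{g,U} = \bigcup_{i \in \Z} g^i(U)$, I would rewrite it in the Boolean algebra $\ro(M)$ as the statement that every nonempty $V \in \ro(M)$ satisfies $g^i(U) \wedge V \neq 0$ for some $i \in \Z$. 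Using the interpreted $\N$ sort (item~(2)), the ternary predicate encoding $g^n(U) = W$ (item~(5)) applied to both $g^n$ and $g^{-n}$, and the Boolean meet on $\ro(M)$ (item~(1)), this condition unwinds to a parameter-free first-order $\mL$-formula independent of $M$.

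For \emph{non-emptiness}, I would split by dimension. When $\dim M \geq 2$, Theorem~\ref{thm:top-trans} supplies a topologically transitive homeomorphism $g \in \Homeo(M)$, and for any $U$ that is the interior of a collared ball---such $U$ exists in every positive-dimensional manifold---topological transitivity directly gives that every nonempty $V \in \ro(M)$ meets some iterate $g^i(U)$, so $M_{g,U}$ is dense. When $\dim M = 1$, $M$ is homeomorphic to either $S^1$ or $\R$; an irrational rotation on $S^1$ is topologically transitive, and on $\R$ the translation $g(x) = x+1$ together with $U = (0,2)$ already yields $\bigcup_{i \in \Z} g^i(U) = \R$, which is clearly dense.

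The main---and essentially the only---subtlety is in the definability part, where one must confirm that the external union $\bigcup_{i \in \Z} g^i(U)$ in the definition of $M_{g,U}$ is captured by a first-order quantifier rather than a metamathematical iteration. This is possible precisely because the relation ``$g^n(U) = W$'' is itself uniformly interpreted as a single predicate by item~(5) of Theorem~\ref{thm:kkdln}, so the index $i$ becomes an ordinary quantifier over the interpreted $\N$ sort. All remaining steps are routine bookkeeping against the infrastructure set up earlier in the section.
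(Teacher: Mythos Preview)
Your proposal is correct and follows essentially the same approach as the paper, which merely remarks that the proposition ``can be proved the same way as Proposition~\ref{prop:top-trans-def}'' without giving further details. Your treatment is in fact more thorough: the paper does not explicitly address non-emptiness in dimension~$1$ (where Theorem~\ref{thm:top-trans} does not apply), whereas you correctly observe that a translation on $\R$ or an irrational rotation on $S^1$ furnishes a sufficiently transitive pair $(g,U)$ even though no topologically transitive homeomorphism of $\R$ exists.
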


For Proposition~\ref{prop:g-dense}, a word needs to be said in dimension one, since $\R$ admits no topologically transitive homeomorphisms. To establish the nontriviality of $\mathcal G(M)$ for the cases $M=\R$ and $M=S^1$, we proceed as follows. Taking $U=(-1/2, 1)\subseteq\R$ and $g\colon x\mapsto x+1$ furnishes an element of $\mathcal G(M)$. For $S^1$, we simply take a nonempty arc whose closure is not $S^1$ and let $g$ be an irrational rotation.

Proposition~\ref{prop:g-dense} allows us to generalize 
Proposition~\ref{prop:interp-seq} to connected, boundaryless
manifolds that are not necessarily compact.

\begin{cor}\label{cor:noncompact-sequence}
    Let $M$ be a connected, boundaryless manifold of dimension $d$.
    There exists a uniform imaginary sort in $\Homeo(M)$
    admitting a canonical bijection with
    the collection $\mathrm{seq}^{\mathcal G}(M)$ of countable sequences $\sigma$ of points in $M$, and a uniformly definable predicate on
    triples $(p,k,\sigma)$ expressing that $p$ is the $k^{th}$ entry of $\sigma$.
\end{cor}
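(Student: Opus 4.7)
The plan is to reduce the noncompact case to the compact case by exploiting the dense orbit supplied by a sufficiently transitive pair $(g,U)\in\mathcal G(M)$, whose definable non-emptiness is guaranteed by Proposition~\ref{prop:g-dense}. First I would fix such a $(g,U)$; then $U$ is the interior of a collared ball and $M_{g,U}=\bigcup_{i\in\Z}g^i(U)$ is dense in $M$. The closure of $U$ lies in a compact collared ball $V\subseteq M$, and the encoding technique of Proposition~\ref{prop:interp-seq}---which relies only on access to $\N$, $2^\N$, points, regular open sets, and the point-in-set membership predicate, all of which are available in $\Homeo(M)$ by Theorem~\ref{thm:kkdln} and Corollary~\ref{cor:noncompact-interp}---provides a uniformly definable sort $\mathrm{seq}(V)$ in canonical bijection with countable sequences of points in $V$, together with its $k$-th-entry predicate.

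Next I would represent an arbitrary countable sequence $\sigma=(p_n)_{n\in\N}$ of points of $M$ by a pair consisting of
\begin{enumerate}
    \item a function $\N^2\to\Z$, $(n,m)\mapsto i_{n,m}$, encoded by an element of the imaginary $2^\N$ sort from Theorem~\ref{thm:kkdln}(2);
    \item a double sequence $(r_{n,m})$ of points in $U\subseteq V$, encoded by an element of $\mathrm{seq}(V)$ via a definable bijection $\N^2\cong\N$;
\end{enumerate}
subject to the requirement that $m\mapsto g^{i_{n,m}}(r_{n,m})$ converges to $p_n$ for each $n$. Density of $M_{g,U}$ in $M$ guarantees that every countable sequence of points of $M$ admits such a code.

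The convergence condition $g^{i_{k,m}}(r_{k,m})\to p$ is first-order definable in $\Homeo(M)$ using the uniformly definable action on points and on $\ro(M)$ from Theorem~\ref{thm:kkdln}(1),(3),(5): for every regular open $W\ni p$, there exists an $m_0\in\N$ such that $g^{i_{k,m}}(r_{k,m})\in W$ for all $m\geqslant m_0$. The $k$-th-entry predicate then asserts precisely that $g^{i_{k,m}}(r_{k,m})\to p$, and the equivalence relation identifying two codes that encode the same sequence of limits is definable in the same way.

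The principal obstacle, as I see it, is uniformity: the formula defining $\mathrm{seq}^{\mathcal G}(M)$ must not refer to any specific pair $(g,U)$. I would handle this by taking the sort to consist of equivalence classes of tuples $(g,U,\mathrm{code})$ with $(g,U)\in\mathcal G(M)$, modulo the definable relation of inducing the same sequence of limits in $M$. Since $\mathcal G(M)\neq\emptyset$ uniformly over all connected boundaryless manifolds by Proposition~\ref{prop:g-dense}, every countable sequence in $M$ is represented, and the resulting imaginary sort stands in the required canonical bijection with the set of all such sequences, uniformly in $M$.
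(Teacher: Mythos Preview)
Your proposal is correct and follows essentially the same approach as the paper. The paper also fixes a pair $(g,U)\in\mathcal G(M)$, interprets sequences in the compact closure of $U$ via Proposition~\ref{prop:interp-seq}, pushes them around by powers of $g$ to obtain sequences in the dense submanifold $M_{g,U}$, and then passes to limits (via a definable pairing $\N^2\to\N$) to capture arbitrary sequences in $M$; uniformity is likewise achieved by existentially quantifying over $(g,U)\in\mathcal G(M)$. The only cosmetic difference is that the paper does this in two stages---first building the sort of sequences lying entirely in some $M_{g,U}$, then quotienting a subsort of ``convergent double sequences'' to reach all of $M$---whereas you fold the shift and the limiting procedure into a single encoding $(i_{n,m},r_{n,m})$ from the outset.
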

\begin{proof}
	  We first claim that there exists a uniformly definable sort
    $\mathrm{seq}^{\mathcal G}(M)$, in canonical bijection with
    countable sequences $\sigma$ of points in $M$ for which there
    is a $(g,U)\in\mathcal G(M)$ such that 
    \[\sigma\subseteq M_{g,U}\subseteq M,\]
	  together with the corresponding uniformly definable entry extraction predicate.
	 
    Fix a pair $(g,U)\in\mathcal G(M)$, with $B$ denoting the closure
    of $U$. An identical argument as for establishing 
    Proposition~\ref{prop:interp-seq} in~\cite{KdlN2023} shows that
    the imaginary sort of countable sequences in $B$ is uniformly
    $\mL$--interpretable, with $(g,U)$ as a parameter. If
    $\sigma$ is a sequence of points in $B$ and
    $\tau\in\Z^{\N}$
    then we obtain a sequence $\xi(\sigma,\tau)$ in $M_{g,U}$ by
    specifying its $n^{th}$ term for all $n$:
    $\xi(n)=g^{\tau(n)}(\sigma(n))$. It is clear that every sequence
    of points in $M_{g,U}$ arises as $\xi(\sigma,\tau)$ for some
    $\sigma$ and $\tau$, and so $\mathrm{seq}^{\mathcal G}(M)$ can be
    defined by 
    $$
    \MGD{\xi}
    { \exists g\,\exists U\;\exists\sigma\;\exists\tau\;
   \Big( 
   (g,U)\in\mathcal G(M)     \wedge  \forall n\; \xi(n)=g^{\tau(n)}\big(\sigma(n)\big)
   \Big)
    }
    $$
    That the entry extraction predicate is uniformly definable is trivial.
    
    To conclude the proof of the Lemma, recall the standard fact that there is a bijection $(m,n)\mapsto\langle m,n\rangle$ between $\N^{2}$ and $\N$ definable without parameters in arithmetic. Now, consider the collection $\mathrm{seq}^{\mathcal G^{*}}(M)$ of sequences $\sigma\in\mathrm{seq}^{\mathcal G}(M)$ such that for each $m\in\N$ the sequence $\sigma(\langle m,n\rangle)$ converges to a point $p_{m}\in M$. It is trivial from all the definability results presented so far that 
    this collection is uniformly definable and that so is the graph of the map 
    \[\N\times\mathrm{seq}^{\mathcal G^{*}}(M)\longrightarrow M,\quad\quad (m,\sigma)\mapsto \lim_{n}\sigma(\langle m,n\rangle).\]
    Clearly, for all $\tau\in\mathrm{seq}^{\mathcal G}(M)$ there is $\sigma\in\mathrm{seq}^{\mathcal G^{*}}(M)$ such that 
    $\tau(m)=\lim_{n}\sigma(\langle m,n \rangle)$ for all $m$, so it can be easily concluded that $\mathrm{seq}^{\mathcal G}(M)$ can be identified with a suitable quotient of $\mathrm{seq}^{\mathcal G^{*}}(M)$ by a uniformly definable equivalence relation. 
\end{proof}

From this point the same arguments as in \cite{KdlN2023} one can conclude the following:   
\begin{cor}\label{sequentially complete}
	Fix a dimension $d\geqslant 1$. For all imaginary sorts $\mathrm{S}$ in the structure $\Homeo(M)$, there exists another sort, defined by formulae that are
    uniform in all boundaryless manifolds $M$ of dimension $d$,
    which encodes the collection $\mathrm{seq}^{S}(M)$ of sequences of elements of $S$. Moreover, the collection of triples $(e,k,\sigma)\in\mathrm{S}\times\N\times\mathrm{seq}^{S}(M)$ such that $\sigma(k)=e$ is uniformly definable for fixed $d$.   
\end{cor}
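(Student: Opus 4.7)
My approach is to bootstrap from Corollary \ref{cor:noncompact-sequence}, which gives the sort $\mathrm{seq}^{\mathcal G}(M)$ of sequences of points in $M$, to sequences of elements of an arbitrary imaginary sort, in two stages. First I would interpret sequences of homeomorphisms using a graph-on-a-dense-sequence encoding; then I would reduce sequences in an arbitrary imaginary sort to sequences in a Cartesian power of $\Homeo(M)$, followed by a definable quotient.

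\textbf{Step 1: Sequences of homeomorphisms.} Using the effective bijection $\langle\cdot,\cdot\rangle\colon\N^2\to\N$, a single $\tau\in\mathrm{seq}^{\mathcal G}(M\times M)$ (which I regard as a pair of sequences in $\mathrm{seq}^{\mathcal G}(M)$ via the two projections) decomposes into a countable family $\{\tau_n\}_{n\in\N}$ of sequences of pairs of points via $\tau_n(k)=\tau(\langle n,k\rangle)$. Fix some $\sigma_0\in\mathrm{seq}^{\mathcal G}(M)$ whose range is dense in $M$; such a $\sigma_0$ is produced uniformly from any $(g,U)\in\mathcal G(M)$ by enumerating a countable dense subset of the dense set $M_{g,U}$. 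Given a sequence $(g_n)_{n\in\N}$ of homeomorphisms, encode it by
$$
\tau(\langle n,k\rangle)=\bigl(\sigma_0(k),\; g_n(\sigma_0(k))\bigr).
$$
By the uniformly definable action predicate $\{(g,p,q)\in\Homeo(M)\times M\times M\,|\,g(p)=q\}$ from Theorem \ref{thm:kkdln}(3) (extended to noncompact $M$ via Corollary \ref{cor:noncompact-interp}), the predicate ``$\tau$ codes a sequence of homeomorphisms of $M$'' is uniformly first-order expressible as the assertion that for every $n\in\N$ there exists $g_n\in\Homeo(M)$ with $g_n(\sigma_0(k))$ equal to the second coordinate of $\tau_n(k)$ for all $k$; such a $g_n$ is automatically unique because $\sigma_0$ has dense range. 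The extraction predicate ``$g$ is the $n$th term of the sequence coded by $\tau$'' is expressible analogously. Call the resulting sort $\mathrm{seq}^{\mathcal G}(\Homeo(M))$.

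\textbf{Step 2: Sequences in an imaginary sort.} Any imaginary sort $\mathrm{S}$ is, by definition, the quotient of a uniformly definable set $X\subseteq\Homeo(M)^r$ by a uniformly definable equivalence relation $E$. Applying Step 1 coordinatewise (and reindexing via a further effective bijection $\N^{r}\to\N$) yields a uniformly definable sort of sequences in $\Homeo(M)^r$; intersecting with the pointwise lift of $X$ gives sequences in $X$; and finally, quotienting by the sequencewise extension $\widetilde E$ (uniformly definable because $E$ is) produces $\mathrm{seq}^S(M)$. The ternary predicate $\sigma(k)=e$ on $\mathrm{S}\times\N\times\mathrm{seq}^S(M)$ is obtained by composing the entry-extraction predicate from Step 1 with the quotient maps.

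\textbf{Main obstacle.} The chief technical hurdle is Step 1: one must produce, first-order uniformly across all connected boundaryless manifolds of a fixed dimension, a dense sequence of points in $M$, and verify that ``a given collection of pairs of points constitutes the graph of a homeomorphism of $M$'' is first-order expressible. Proposition \ref{prop:g-dense} supplies the dense sequence $\sigma_0$, while the uniform definability of the action of $\Homeo(M)$ on $M$ reduces the graph condition to an existential statement over the home sort, precisely mirroring the argument of \cite{KdlN2023} in the compact case.
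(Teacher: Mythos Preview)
Your proposal is correct and follows precisely the line of argument the paper has in mind: the paper's own proof is the single sentence ``the same arguments as in \cite{KdlN2023}'', and your two-step reduction (first encode sequences of homeomorphisms by their graphs on a dense sequence of points, then bootstrap to arbitrary imaginary sorts via the presentation $\mathrm{S}=X/E$ with $X\subseteq\Homeo(M)^r$) is exactly that argument made explicit. One minor point of presentation: rather than ``fixing'' $\sigma_0$, which would introduce a parameter, you should absorb $\sigma_0$ into the encoding data (or existentially quantify over it when defining the sort and then quotient by the equivalence of encoding the same sequence), just as the proof of Corollary~\ref{cor:noncompact-sequence} does with the pair $(g,U)$; but this is a cosmetic adjustment, not a gap.
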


For $X$ a separable Hausdorff topological space that is parameter-free definable in second-order arithmetic, we write 
$\mathrm{seq}^{\mathcal G}(M\times X)$ for sequences in $M\times X$
where the projection to the first coordinate is an element of
$\mathrm{seq}^{\mathcal G}(M)$.

\begin{cor}\label{cor:noncompact-inner}
    Let $M$ be a connected, boundaryless manifold of dimension $d$
    and let
    $X$ be a separable Hausdorff topological space that is parameter-free definable in second-order arithmetic.
    There is a uniformly definable imaginary sort in $\mathrm{seq}^{\mathcal G}
    (M\times X)$
    which
    is in canonical bijection with elements of
    $\Homeo(M\times X)$. Moreover,
    the group operation on $\Homeo(M\times X)$
    is given by a definable predicate.
\end{cor}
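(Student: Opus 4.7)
The plan is to mirror the strategy used for compact manifolds in~\cite{KdlN2023}, encoding each homeomorphism $f\in\Homeo(M\times X)$ by an equivalence class of pairs $(\sigma,\tau)\in \mathrm{seq}^{\mathcal G}(M\times X)^{2}$ with $\sigma$ dense in $M\times X$ and $\tau(k)=f(\sigma(k))$ for all $k\in\N$. The sort of pairs of sequences from $\mathrm{seq}^{\mathcal G}(M\times X)$ is already uniformly available by Corollary~\ref{sequentially complete} applied to this imaginary sort; the claimed sort will be cut out as a uniformly definable subquotient.

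First I would verify that ``$\sigma$ is dense in $M\times X$'' is uniformly definable. The space $M\times X$ has a countable basis consisting of products $V_{i}\times W_{j}$, where $\{V_{i}\}$ may be taken to be interiors of regular compact subsets of $M$ (uniformly definable by Theorem~\ref{thm:kkdln} together with Corollary~\ref{cor:noncompact-interp}) and $\{W_{j}\}$ is the parameter-free basis for $X$ coming from its analytical presentation. Density is then captured by the scheme asserting that for every $(i,j)$ there is a $k$ with $\sigma(k)\in V_{i}\times W_{j}$, using the uniformly definable membership predicates. Next I would capture ``$(\sigma,\tau)$ encodes the graph of a homeomorphism'' by demanding: whenever $\sigma(k_{n})\to p$ in $M\times X$ the sequence $\tau(k_{n})$ converges and the limit depends only on $p$, together with the symmetric condition obtained by swapping $\sigma$ and $\tau$. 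Convergence itself is expressed through the definable basis above. The equivalence relation identifying two pairs that represent the same $f$ is cleanly expressed by interleaving them into a single pair $(\sigma'',\tau'')$ and demanding that $(\sigma'',\tau'')$ still satisfies the homeomorphism-graph predicate.

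The group operation is defined by the predicate asserting that $(\sigma_{3},\tau_{3})$ represents the composition of the homeomorphisms encoded by $(\sigma_{1},\tau_{1})$ and $(\sigma_{2},\tau_{2})$: for each $k$, the value $\tau_{3}(k)$ is the limit of $\tau_{1}(m_{n})$ along any subsequence along which $\sigma_{1}(m_{n})$ converges to the same point as $\tau_{2}(\ell_{n})$ does, where $\sigma_{2}(\ell_{n})\to\sigma_{3}(k)$. Each ingredient here is uniformly definable, hence so is the composition predicate.

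The main obstacle is handling convergence and sequences in the noncompact space $M\times X$ uniformly over all $M$, since the topology near the ends of $M$ is not directly accessible in the way it is for compact manifolds. The crucial leverage is that elements $\sigma\in\mathrm{seq}^{\mathcal G}(M\times X)$ have first coordinate lying in a $g$-dense submanifold $M_{g,U}$ for some $(g,U)\in\mathcal G(M)$ (Proposition~\ref{prop:g-dense}), and the countable cover $\{g^{i}(U)\}_{i\in\Z}$ of $M_{g,U}$, paired with the analytical basis for $X$, furnishes a uniformly definable neighborhood basis at every point of $M_{g,U}\times X$; density of $M_{g,U}$ in $M$ then lets density of $\sigma$ in $M_{g,U}\times X$ imply density in $M\times X$. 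With this local finiteness in hand, the compact-case definitions from~\cite{KdlN2023} transfer essentially verbatim to our setting.
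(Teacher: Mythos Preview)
Your approach---encoding homeomorphisms as equivalence classes of pairs $(\sigma,\tau)$ of sequences with $\sigma$ dense and $\tau=f\circ\sigma$, using the product basis $\{V_i\times W_j\}$ to express density and convergence---is exactly what the paper intends; the paper itself gives no explicit proof here, merely observing in the paragraphs preceding the corollary that homeomorphisms are determined by their values on a dense set and that the construction from~\cite{KdlN2023} for compact manifolds goes through once sequences in $M\times X$ are available.

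Your final paragraph, however, manufactures a non-obstacle. By Theorem~\ref{thm:kkdln} and Corollary~\ref{cor:noncompact-interp}, the sort $\ro(M)$ together with the membership predicate $p\in U$ is uniformly definable even for noncompact $M$, so convergence in $M$ (and hence in $M\times X$) is already directly expressible via the basis you wrote down in your first paragraph; nothing special happens ``near the ends.'' Moreover, by the statement of Corollary~\ref{cor:noncompact-sequence}, $\mathrm{seq}^{\mathcal G}(M)$ is the sort of \emph{all} countable sequences in $M$ (the limit trick in its proof passes from sequences lying in some $M_{g,U}$ to arbitrary sequences), so your proposed workaround via the cover $\{g^i(U)\}$ is both unnecessary and based on a misreading of the notation. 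Drop the last paragraph and your argument is complete as written.
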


\subsection{Language expansion and dimension jumping}

Now, consider the class of connected manifolds up to homeomorphism,
and let $M$ be a manifold of fixed dimension $d$.
By Theorem~\ref{thm:kkdln}, there is an
$\mL$--interpretation in $\Homeo(M)$ of
points $p$ in $M$ and regular open sets $U$ of $M$, together with the
predicate $p\in U$; these interpretations are parameter-free and
uniform in $M$. We thus uniformly interpret an uncountable structure
which consists of two sorts (points and regular open sets) and
which has an underlying signature consisting of the non-logical symbol
$\in$. We (conservatively) expand this structure to a larger
structure $[M]$ which includes a uniform, parameter-free interpretation
of full second-order arithmetic, i.e.~ the sort
$(\N,2^{\N},\in,+,\times,<)$.
It is clear that $[M]$ contains all the data of the topology
of $M$, and the uniformity of the interpretation implies that if
$\Homeo(M)\equiv\Homeo(M')$ then $[M]\equiv [M']$.

By Corollary~\ref{cor:noncompact-sequence}, the sort
$\mathrm{seq}^{\mathcal G}(M)$ of sequences of points lying in
dense submanifolds is uniformly $\mathcal L$--interpretable.
We write $[M]^+$ for the extended structure consisting of $[M]$ together
with the sort $\mathrm{seq}^{\mathcal G}(M)$. Again, if $\Homeo(M)\equiv\Homeo(N)$
then $[M]^+\equiv [N]^+$.

Now, for a fixed $k$, we fix a description of the $k$--sphere $S^k$
as a definably presented Polish space.
By Corollary~\ref{cor:noncompact-inner},
we see the following:

\begin{prop}\label{prop:homeo-interp}
    The structure $[M]^+$ parameter-free interprets the structure
    $\Homeo(M\times S^k)$, uniformly for manifolds $M$ of fixed
    dimension $d$, together with a predicate expressing
    the group operation.
\end{prop}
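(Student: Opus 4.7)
The plan is to invoke Corollary~\ref{cor:noncompact-inner} with $X = S^k$ and verify that the interpretation it delivers lives entirely inside $[M]^+$. Since $S^k$ is an analytically presented Polish space (see Section~\ref{ss:effective}), its points, a countable base of open sets, the metric, and the sort of countable sequences in $S^k$ are all parameter-free definable in the second-order arithmetic sort already present in $[M]^+$. Combined with the sorts of points of $M$, regular open sets of $M$, and $\mathrm{seq}^{\mathcal G}(M)$ that come packaged in $[M]^+$, this provides parameter-free access to all the ingredients used in Corollary~\ref{cor:noncompact-inner}.

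Concretely, the steps I would carry out are as follows. First, build the sort $\mathrm{seq}^{\mathcal G}(M\times S^k)$ in $[M]^+$ by pairing an element of $\mathrm{seq}^{\mathcal G}(M)$ with a sequence in $S^k$ coming from second-order arithmetic; such a product sequence lies in $M_{g,U}\times S^k$ whenever $(g,U)\in \mathcal{G}(M)$, and $M_{g,U}\times S^k$ is a dense submanifold of $M\times S^k$. Second, following the strategy of Corollary~\ref{cor:noncompact-inner}, I would encode a homeomorphism $h$ of $M\times S^k$ by any sequence in $(M\times S^k)^2$ that is dense in its graph $\Gamma_h$; the statement that such a sequence encodes a genuine homeomorphism (existence of limits, injectivity, bicontinuity) is a parameter-free formula of the same flavour as the clauses of Lemma~\ref{lem:projective-property}, expressible once the topologies of $M$ and $S^k$ are at hand. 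Third, quotient by the definable equivalence relation ``two sequences have the same closure in $(M\times S^k)^2$'' to obtain a sort in canonical bijection with $\Homeo(M\times S^k)$, and express the group operation by the standard ``composition of graphs'' predicate.

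The main technical point is to ensure that every homeomorphism $h\in\Homeo(M\times S^k)$ is actually captured by some sequence available in $[M]^+$, rather than only those arising from a restricted class. For this I would fix any $(g,U)\in\mathcal{G}(M)$ and argue that $\Gamma_h\cap\bigl((M_{g,U}\times S^k)\times (M\times S^k)\bigr)$ is dense in $\Gamma_h$, since $M_{g,U}$ is dense in $M$ and $h$ is continuous; a countable dense sequence in this intersection then furnishes an encoding of $h$ whose first $M$-coordinate projects into $M_{g,U}$, hence a legitimate element of the interpreted $\mathrm{seq}^{\mathcal G}(M\times S^k)$. Uniformity in $M$ for fixed dimension $d$ is inherited from the uniformity assertions of Theorem~\ref{thm:kkdln}, Corollary~\ref{cor:noncompact-sequence}, and Corollary~\ref{cor:noncompact-inner}, since the resulting formulas depend only on $d$ and on the fixed analytic presentation of $S^k$.
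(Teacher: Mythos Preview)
Your proposal is correct and follows the same approach as the paper: the paper simply states the proposition as an immediate consequence of Corollary~\ref{cor:noncompact-inner} applied with $X=S^k$, and you are doing exactly this while spelling out the details of how the sorts of $[M]^+$ suffice to carry out that corollary's interpretation. Your final paragraph is slightly more cautious than needed, since by the limit construction in the proof of Corollary~\ref{cor:noncompact-sequence} the sort $\mathrm{seq}^{\mathcal G}(M)$ already encodes \emph{all} countable sequences in $M$, not merely those lying in some $M_{g,U}$; but the density argument you give is harmless and correct.
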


Because the interpretation of $\Homeo(M\times S^k)$ is parameter-free
and uniform, we have $[M]^+\equiv [N]^+$ implies that
$\Homeo(M\times S^k)\equiv \Homeo(N\times S^k)$. We thus
obtain Theorem~\ref{thm:pd-dim-implication} from the introduction
as an immediate corollary of Proposition~\ref{prop:homeo-interp}.

\subsection{Noncompact surfaces and ordinals}

From Corollary~\ref{sequentially complete} above, we have the following fact relating
the topology of subspaces of the Cantor space $2^{\N}$ to first-order rigidity:

\begin{cor}\label{space of ends}
    Let $\alpha<\omega_{1}$ be an ordinal, let $C_\alpha\subseteq S^2$ be a closed set homeomorphic to $\omega^\alpha+1$ and set $S=S^2\setminus C_\alpha$. 
    Let $\mathrm{COrd}$ be a uniformly definable sort in $\Homeo(S)$
    that interprets the collection of all countable ordinals. Then there is
    a formula $\phi(x)$, independent of the underlying surface,
    such that $\Homeo(S)\models \phi(\beta)$ for a countable ordinal $\beta$ if and only if $\beta=\alpha$. 
\end{cor}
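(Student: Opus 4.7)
The plan is to reconstruct the space of ends of $S$ as a uniformly interpretable topological space in $\Homeo(S)$, and then to express by a first-order formula in the sort $\mathrm{COrd}$ that this space is homeomorphic to the ordinal $\beta$ equipped with its order topology.

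First, I would construct the space of ends $E(S)$ as a uniformly definable quotient sort. By Corollary~\ref{sequentially complete}, sequences of points in $S$ are uniformly interpretable; by Theorem~\ref{thm:kkdln}, so are regular compact sets and connected components of regular open sets. Call a sequence $(p_n)$ \emph{escaping} when for every $K \in \rc(S)$ one has $p_n \notin K$ for all sufficiently large $n$, and declare two escaping sequences equivalent when, for every $K \in \rc(S)$, they eventually lie in the same connected component of $S \setminus K$. Both conditions are first-order, so the quotient gives a uniformly definable sort $E(S)$ in bijection with the topological ends of $S$. A basic open neighborhood in $E(S)$ is determined by a pair $(K, V)$ with $K \in \rc(S)$ and $V$ a component of $S \setminus K$, comprising those ends with a representative eventually in $V$; this too is uniformly definable.

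Next, from $\mathrm{COrd}$ I would interpret, uniformly in $\beta$, the ordinal $\beta$ as an ordered topological space, with order topology generated by the open intervals. I then define $\phi(\beta)$ to assert the existence of a bijection $h \colon E(S) \to \beta$ that is a homeomorphism between these two topologies. Since $E(S_\alpha)$ and $\beta$ are both countable, any such $h$ can be encoded as a sequence of pairs in $E(S) \times \mathrm{COrd}$, and Corollary~\ref{sequentially complete} allows quantification over such sequences; the homeomorphism condition translates into a first-order statement about bijective correspondence of basic opens on both sides. For $S = S_\alpha$ the end-space is canonically homeomorphic to the removed set, which by construction is homeomorphic to $\alpha$ with its order topology, so $\Homeo(S_\alpha) \models \phi(\beta)$ precisely when $\beta$ and $\alpha$ are homeomorphic as ordered topological spaces, and by the classical classification of countable ordinals via their order topology this occurs exactly when $\beta = \alpha$.

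The main obstacle is ensuring uniformity throughout: the definitions of escaping sequences, the equivalence relation on them, the basic opens of $E(S)$, the interpretation of $\beta$ as an ordered topological space, and the encoding of bijections as sequences must all be given by formulae whose form does not depend on the surface $S$. This is afforded by the uniformity built into the sorts and predicates provided by Theorem~\ref{thm:kkdln} and Corollary~\ref{sequentially complete}, together with the fact that the topology of the space of ends is definable from the manifold structure alone, so that a single formula $\phi(x)$ suffices across all surfaces in the family $\{S_\alpha\}$.
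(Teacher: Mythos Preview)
Your approach mirrors the paper's: both uniformly interpret the space of ends (you via equivalence classes of escaping point-sequences, the paper via nested sequences $\big((K_n,U_n)\big)_n$ of compact exhaustions together with a choice of complementary component, modulo mutual coinitiality) and then take $\phi(\beta)$ to assert that the end space is homeomorphic to $\beta$ with its order topology. These two encodings of ends are standard and interchangeable, so the overall strategy is the same.

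The final sentence, however, contains a genuine error. You claim that $\beta$ and $\alpha$ being homeomorphic as order-topological spaces forces $\beta=\alpha$ ``by the classical classification of countable ordinals via their order topology.'' This is false: for instance, $\omega+1$ and $\omega+2$ are distinct ordinals but homeomorphic, since each is a countable compact space with exactly one non-isolated point. The Mazurkiewicz--Sierpi\'nski theorem classifies countable compact metrizable spaces up to homeomorphism as $\omega^{\gamma}\cdot n+1$ for unique $\gamma$ and $n$; it does \emph{not} say that distinct ordinals carry distinct order topologies. With $\phi$ as you construct it, $\Homeo(S_\alpha)\models\phi(\beta)$ holds whenever $\beta$ is merely homeomorphic to $\alpha$, which is strictly weaker than $\beta=\alpha$. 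The paper's proof in fact stops at exactly the same point (the formula it writes down also only detects the homeomorphism type of the end space) and so shares this gap between what is established and the literal ``iff $\beta=\alpha$'' in the statement; but you have gone further and asserted the false implication explicitly.
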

\begin{proof}
By applying Corollary \ref{sequentially complete} one can successively encode
the following data as imaginary classes, uniformly in all manifolds of dimension $d$: 
\begin{itemize}
	\item open subsets of $M$, seen as sequences of interiors of embedded balls quotiented by the equivalence relation of having the same union;
	\item the binary relation which holds between open sets when one is a connected component of the other; 
	\item compact subsets of $M$;
	\item infinite sequences $\big((K_{n},U_{n})\big)_{n\in\mathbb{N}}$, where $K_{1}\subseteq K_{2}\dots$ is an exhaustion of $M$ by compact sets and $U_{n}$ is a connected component of $M\setminus K_{n}$; 
	\item the equivalence relation between the sequences \[\big((K_{n},U_{n})\big)_{n\in\mathbb{N}}\quad \textrm{and}\quad \big((K'_{n},U'_{n})\big)_{n\in\mathbb{N}}\] given by $(U_{n})_{n\in\mathbb{N}}$ and $(U'_{n})_{n\in\mathbb{N}}$ are mutually coinitial for the inclusion, and whose equivalence classes are in bijection with the space of ends;  
	\item the relation between open sets $U\subseteq M$ and ends $E$ of $M$, 
    given by $E\in\bar{U}$. 
\end{itemize}
 Countable ordinals exist as an imaginary sort $S$ in second-order arithmetic, and the order topology on any such ordinal is definable in a way that does not depend on the ordinal. It is thus trivial to write down a formula without parameters $\phi(\alpha)$ which expresses the existence of a homeomorphism between $\omega^\alpha+1$ and the space of ends. 
\end{proof}

\begin{cor}
	Suppose that all elements of the group $\Homeo_0([0,1])$ are type rigid and that $\alpha\neq\alpha'$ are countable ordinals. Then $\Homeo(S_{\alpha})\not\equiv\Homeo(S_{\alpha'})$.
\end{cor}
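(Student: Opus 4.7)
The plan is to encode each countable ordinal $\gamma$ as the conjugacy class of a canonical element of $\Homeo_0([0,1])$, transport this encoding uniformly into the first-order theory of $\Homeo(S_\beta)$, and then invoke type rigidity to convert the hypothesis $\Homeo(S_\alpha)\equiv\Homeo(S_{\alpha'})$ into the topological impossibility that $\alpha=\alpha'$. For each countable ordinal $\gamma$, let $\mathcal{C}_\gamma\subseteq\Homeo_0([0,1])$ be the set of orientation-preserving homeomorphisms $h$ of $[0,1]$ satisfying $h(x)\geqslant x$ everywhere and such that the fixed point set $\fix(h)\subseteq[0,1]$ is homeomorphic to $\gamma+1$ equipped with its order topology. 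By the classical conjugacy classification in $\Homeo_0([0,1])$, which proceeds via the ordered fixed point set together with the sign of the displacement on each complementary interval, $\mathcal{C}_\gamma$ is exactly one conjugacy class. Since the topological type of a countable compact ordinal space recovers the ordinal itself, for instance by Cantor--Bendixson analysis, $\mathcal{C}_\gamma\cap\mathcal{C}_{\gamma'}=\varnothing$ whenever $\gamma\neq\gamma'$.

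By Theorem~\ref{thm:kkdln}, Corollary~\ref{cor:noncompact-interp} and Corollary~\ref{cor:noncompact-sequence}, the structure $\Homeo(S_\beta)$ uniformly and parameter-free interprets second-order arithmetic, and hence also the sort $\mathrm{COrd}$ and the group $\Homeo_0([0,1])$ (as order-preserving self-homeomorphisms of the interpreted $[0,1]$ fixing both endpoints). In this interpretation the assignment $\gamma\mapsto\mathcal{C}_\gamma$ is picked out by the parameter-free formula
\[
\Psi(h,\gamma)\;\equiv\;\bigl(h\in\Homeo_0([0,1])\bigr)\wedge\bigl(\forall x\,\,h(x)\geqslant x\bigr)\wedge\bigl(\fix(h)\cong\gamma+1\bigr).
\]
Assuming for contradiction that $\Homeo(S_\alpha)\equiv\Homeo(S_{\alpha'})$, let $\phi(\gamma)$ be the formula from Corollary~\ref{space of ends} that isolates the ordinal of the space of ends. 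For each formula $\sigma(h)$ in the language of the interpreted $\Homeo_0([0,1])$, elementary equivalence gives
\[
\Homeo(S_\alpha)\models\exists\gamma\,\exists h\,\bigl(\phi(\gamma)\wedge\Psi(h,\gamma)\wedge\sigma(h)\bigr)\iff\Homeo(S_{\alpha'})\models\exists\gamma\,\exists h\,\bigl(\phi(\gamma)\wedge\Psi(h,\gamma)\wedge\sigma(h)\bigr).
\]
Unwinding the interpretations, both sides reduce to the assertion that some (equivalently every) representative of $\mathcal{C}_\alpha$, respectively $\mathcal{C}_{\alpha'}$, satisfies $\sigma$ in $\Homeo_0([0,1])$. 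Hence representatives of $\mathcal{C}_\alpha$ and $\mathcal{C}_{\alpha'}$ have equal first-order types, and the type rigidity hypothesis forces them to be conjugate in $\Homeo_0([0,1])$, contradicting the disjointness of $\mathcal{C}_\alpha$ and $\mathcal{C}_{\alpha'}$.

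The main obstacle lies in the interpretation step: one must verify that the copy of $\Homeo_0([0,1])$ manufactured inside $\Homeo(S_\beta)$ is faithfully isomorphic to the actual group $\Homeo_0([0,1])$, and that the ingredients of $\Psi(h,\gamma)$, in particular the predicate ``$\fix(h)\cong\gamma+1$'', are indeed parameter-free expressible in the interpretation. Both points rest on the uniform, absolute character of second-order arithmetic inside homeomorphism groups and are essentially bookkeeping given the machinery assembled in Section 3; the genuinely topological input is the classical fact that conjugacy in $\Homeo_0([0,1])$ is classified by the ordered topological type of the fixed point set and a sign pattern, and that this ordered type recovers the ordinal $\gamma$.
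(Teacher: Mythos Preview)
Your overall strategy matches the paper's: encode each ordinal as (the conjugacy class of) an element of $\Homeo_0([0,1])$ definably in second-order arithmetic, use Corollary~\ref{space of ends} to pin down the end-space ordinal from inside $\Homeo(S_\beta)$, and then invoke type rigidity. However, your specific encoding has two genuine errors.

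First, $\mathcal{C}_\gamma$ is \emph{not} a single conjugacy class in $\Homeo_0([0,1])$. Conjugacy there is governed by the \emph{order} type of the fixed set, not merely its homeomorphism type: for $\gamma=\omega$, homeomorphisms with fixed sets $\{0\}\cup\{1-1/n:n\geqslant 2\}\cup\{1\}$ and $\{0\}\cup\{1/n:n\geqslant 2\}\cup\{1\}$ both lie in your $\mathcal{C}_\omega$ (each set has a single limit point), yet no increasing self-homeomorphism of $[0,1]$ carries one fixed set to the other, since it fixes $0$ and $1$ while the unique limit point would have to travel from $1$ to $0$. Consequently the step ``some (equivalently every) representative of $\mathcal{C}_\alpha$ satisfies $\sigma$'' is unjustified, and you cannot pass from the equivalence of $\exists h\in\mathcal{C}_\alpha\,\sigma(h)$ and $\exists h\in\mathcal{C}_{\alpha'}\,\sigma(h)$ for every $\sigma$ to the existence of elements with equal type.

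Second, the claim that the topological type of $\gamma+1$ recovers $\gamma$ is false: by Mazurkiewicz--Sierpi\'nski, $\omega+1$ and $\omega+2$ are homeomorphic, so $\mathcal{C}_\omega=\mathcal{C}_{\omega+1}$ and your disjointness assertion fails outright. Both problems disappear if you replace ``$\fix(h)$ is homeomorphic to $\gamma+1$'' by ``$\fix(h)$ is \emph{order}-isomorphic to $\gamma+1$ in the order inherited from $[0,1]$''; this is still parameter-free definable in the interpreted second-order arithmetic, now genuinely yields a single conjugacy class, and distinct ordinals have distinct order types. The paper sidesteps the issue entirely by assigning to each ordinal a single definable \emph{element} $g(\alpha)\in\Homeo_0([0,1])$ (via blowing up a canonical order-embedding of $\alpha$ into $[0,1]$) rather than a set, and then appeals to Lemma~\ref{l:uniform intepretation and rigidity}.
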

\begin{proof}
	It is not hard to check that in second-order arithmetic, there is a map
    that is definable without parameters, which to each countable ordinal $\alpha$ assigns an element $g(\alpha)\in\Homeo_0([0,1])$ in such a way that $g(\alpha)$ 
    and $g(\alpha')$ are conjugate in $\Homeo([0,1])$ only if $\alpha=\alpha'$.
    Such a map can be obtained, for instance, by blowing up some canonical
    (up to an orientation preserving homeomorphism) embedding of the ordinal into $[0,1]$. Corollary \ref{space of ends} and Lemma \ref{l:uniform intepretation and rigidity} give the desired conclusion.
\end{proof}

\subsection{A topological fact}

In this section, we record a fact from
point-set topology, which will be useful in the sequel.

\begin{prop}\label{prop:top}
    Let $X$ and $Y$ be separable, locally compact, complete metric spaces. Assume that  $\{x_n\}_{n\in\N}\subseteq X$ and
    $\{y_n\}_{n\in\N}\subseteq Y$ are dense subsets of $X$, respectively of $Y$ such that:
    \begin{enumerate}
        \item for each regular compact $C\subseteq X$ there exists
        a regular compact $K\subseteq Y$ such that
        \[x_n\in C \Leftrightarrow y_n\in K;\]
        \item for each regular compact $K\subseteq Y$ there exists
        a regular compact $C\subseteq X$ such that
        \[x_n\in C \Leftrightarrow y_n\in K.\]
    \end{enumerate}
    Then the map $x_n\mapsto y_n$ extends
    to a homeomorphism \[\theta\colon X\longrightarrow Y.\]
\end{prop}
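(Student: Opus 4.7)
The plan is to extract from the hypothesis a canonical order-preserving bijection on regular compact sets and then use it to recover a map on points. I define $\Phi\colon \rc(X)\to\rc(Y)$ by $\Phi(C)=\overline{\{y_n: x_n\in C\}}$; this is the unique regular compact satisfying $x_n\in C\Leftrightarrow y_n\in \Phi(C)$, since $\{y_n\}\cap\inte(K)$ is dense in $K$ for every regular compact $K$. The map $\Phi$ is order-preserving, and by hypothesis (2) is a bijection with an analogous inverse $\Phi^{-1}$. The correspondence $\theta_0(x_n):=y_n$ is then a well-defined injection on the dense sets: if $x_n=x_m$, applying hypothesis (2) to each regular compact $K\ni y_n$ shows $y_n$ and $y_m$ lie in exactly the same regular compacts, forcing $y_n=y_m$ since regular compact neighborhoods separate points in locally compact Hausdorff spaces.

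To extend $\theta_0$ to $X$, for each $p\in X$ I fix a decreasing sequence $\{C_j\}$ of regular compact neighborhoods with $C_{j+1}\subseteq\inte(C_j)$ and $\bigcap_j C_j=\{p\}$. The nested sequence $K_j:=\Phi(C_j)$ has nonempty intersection $Z_p:=\bigcap_j K_j\subseteq Y$, a compact set, and I define $\theta(p)$ as the unique point of $Z_p$. The main technical obstacle is to show that $Z_p$ is indeed a singleton.

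For this, suppose for contradiction that $q_1\neq q_2$ both lie in $Z_p$. I choose disjoint regular compact neighborhoods $K^*_i\ni q_i$ with $q_i\in\inte(K^*_i)$, and set $C^*_i:=\Phi^{-1}(K^*_i)$. From $K^*_1\cap K^*_2=\varnothing$ one sees $\{x_n\}\cap C^*_1\cap C^*_2=\varnothing$, so $C^*_1\cap C^*_2$ has empty interior in $X$. On the other hand, using density of $\{y_n\}\cap K_j$ near $q_i\in\inte(K^*_i)$, I extract sequences $x_{n_k}^{(i)}\to p$ with $y_{n_k}^{(i)}\to q_i$ and $x_{n_k}^{(i)}\in C^*_i$, which forces $p\in C^*_1\cap C^*_2$. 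To amplify this tension into a contradiction, I shrink $K^*_i$ along nested sequences $\{K^{*,m}_i\}_m$ with $\bigcap_m K^{*,m}_i=\{q_i\}$ and argue, by applying the entire construction with $X$ and $Y$ interchanged (using the symmetry of the hypothesis), that the nested intersection $\bigcap_m \Phi^{-1}(K^{*,m}_i)$ is itself a singleton for each $i$. Both singletons must equal $p$ by the foregoing, so $\theta^{-1}(q_1)=p=\theta^{-1}(q_2)$; yet the disjointness $K^{*,m}_1\cap K^{*,m}_2=\varnothing$ implies the corresponding singletons in $X$ are themselves distinct for large $m$, giving the contradiction. This simultaneous handling of the $X\to Y$ and $Y\to X$ directions is the technical heart of the proof.

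Finally, once $\theta$ is well-defined, independence from the choice of $\{C_j\}$ follows by interlacing two such sequences, and continuity at $p$ follows from the filter characterization: any open neighborhood $W$ of $\theta(p)$ contains some regular compact $K\ni\theta(p)$ in its interior, and $\Phi^{-1}(K)$ is then a neighborhood of $p$ whose $\theta$-image lies in $K\subseteq W$. The inverse $\theta^{-1}$ is defined symmetrically, and the resulting $\theta$ restricts to $\theta_0$ on the dense sets, completing the construction.
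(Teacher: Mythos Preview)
Your setup (defining $\Phi$ via closures of index sets, extending to points through nested regular compact neighbourhoods) matches the paper's, but the argument that $Z_p$ is a singleton has a genuine gap. You write that ``by applying the entire construction with $X$ and $Y$ interchanged\ldots\ the nested intersection $\bigcap_m \Phi^{-1}(K^{*,m}_i)$ is itself a singleton for each $i$.'' This is exactly the claim under proof, only with the roles of $X$ and $Y$ swapped; invoking it here is circular. The symmetry of the hypotheses guarantees that \emph{once} the singleton property is established in one direction it transfers to the other, but it does not allow you to assume the reverse direction while proving the forward one. Moreover, even granting that circular step, your stated contradiction does not follow: from $K^{*,m}_1\cap K^{*,m}_2=\varnothing$ you only conclude that no $x_n$ lies in both $\Phi^{-1}(K^{*,m}_1)$ and $\Phi^{-1}(K^{*,m}_2)$, hence that these regular compacts have disjoint \emph{interiors}; their closures may still meet, so both nested intersections equalling $\{p\}$ is not yet absurd.

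The paper sidesteps the circularity with one extra move. After choosing disjoint regular compact neighbourhoods $K\ni y$ and $K'\ni y'$, it sets
\[
L_m=\overline{\inte(K_m\cap K)},\qquad L'_m=\overline{\inte(K_m\cap K')}.
\]
The point is that $L_m\subseteq K_m=\Phi(C_m)$, so the order-preserving property of $\Phi^{-1}$ forces $N_m:=\Phi^{-1}(L_m)\subseteq C_m$, whence $\bigcap_m N_m\subseteq\bigcap_m C_m=\{x\}$ directly. Nonemptiness of each $N_m$ comes from $\inte(K)\cap\inte(K_m)\neq\varnothing$ (since $y\in\inte(K)$ and $K_m$ is regular), so $\bigcap_m N_m=\{x\}$, and likewise $\bigcap_m N'_m=\{x\}$, with $L_m$ and $L'_m$ lying in the disjoint sets $K$ and $K'$. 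No appeal to the reverse-direction singleton property is needed. This ``intersect with $K_m$ before pulling back'' is the idea your argument is missing.
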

\begin{proof}
    Because the set $\{x_n\}_{n\in\N}$ is dense in $X$, we see that a regular closed set $C\subseteq X$ is completely determined by the intersection $C\cap \{x_n\}_{n\in\N}$. Similarly for regular closed subsets of $Y$.
    
    Let $x\in X$ be any given point and find, by local compactness, a sequence $(C_m)_{m\in \N}$ of regular
    compact subsets of $X$ such that $\{x\}=\bigcap_m C_m$.
    We let
    $K_m$ be the (unique) regular compact set in $Y$ corresponding via (1) to $C_m$
    and we consider $\bigcap_m K_m$. Because the $K_m$ are nested,
    compact, and nonempty, we have that this intersection contains at
    least one point $y$. Suppose for a contradiction that $y'\neq y$ is another point in this
    intersection and choose disjoint regular compact neighborhoods $K\ni y$ and $K'\ni y'$. Set 
    $$
    L_m=\overline{{\sf int}\big(K_m\cap K\big)}\qquad\&\qquad L'_m=\overline{{\sf int}\big(K_m\cap K'\big)}
    $$
    and note that $(L_m)_{m\in \N}$ and $(L'_m)_{m\in \N}$ form disjoint  nested sequences of regular compact subsets with 
    $$
    y\in \bigcap_mL_m \qquad\&\qquad y'\in \bigcap_mL'_m.
    $$
    The $L_m$ and $L'_m$ correspond via (2) to regular compact sets 
    $N_m\subseteq C_m$ and $N_m'\subseteq C_m$ such that $N_m\cap N_m'=\emptyset$. Moreover, by compactness again, we have
    $$
    \emptyset\neq \bigcap_mN_m\subseteq  \bigcap_m C_m =\{x\}  
    $$
    and
    $$
    \emptyset\neq \bigcap_mN'_m\subseteq  \bigcap_m C_m =\{x\},  
    $$
    which is absurd.

    Thus, $\bigcap_n K_n$ consists of exactly the one point $y$. We define $\theta(x)=y$. It can be
    verified then that a sequence in $\{x_n\}_{n\in\N}$ converges
    to a point $x\in X$ if and only if the sequence in $\{y_n\}_{n\in \N}$
    with the same indices converges to a point $y\in Y$, and
    $\theta(x)=y$. This implies that $\theta$ is a continuous bijection with a
    continuous inverse.
\end{proof}



\section{Constructibility and rigidity}

Let $M$ be a connected, abstract manifold. To keep a succinct notation,
in this section, we set $G_M=\Homeo(M)$ and let $g\in G_M$.
We wish to show that under {V=L}, the group $G_M$ is first-order rigid
and the homeomorphism $g$ is type rigid. All interpretations of structures in
$G_M$ are over the language $\mL$.

We will use the following standard consequences of {V=L}. First, the
canonical well-ordering $<_{L}$ of $L$ is definable without parameters in the
language of set theory. Second, if $X$ is a definably presented Polish space,
then $<_{L}$ induces a parameter-free projective well-ordering of $X$. Third,
every second countable manifold admits a real code; hence, under {V=L}, every
such manifold is homeomorphic to one coded in $L_{\omega_1}$.

\begin{prop}[See~\cite{moschovakis-book}]
    Assume {V=L}, and let $X$ be a definably presented
    Polish space. Then there is a parameter-free projective well-ordering on $X$.
\end{prop}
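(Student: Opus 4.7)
The plan is to reduce everything to the classical theorem that under V=L there is a parameter-free $\Sigma^1_2$ well-ordering of Baire space $\N^\N$, and then transport it through the analytic presentation of $X$.

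First I would recall Gödel's canonical well-ordering $<_L$ of the constructible hierarchy $L = \bigcup_\alpha L_\alpha$. One compares two constructible sets first by the least ordinal stage at which they appear and then, within a single stage, by a fixed parameter-free arithmetic well-ordering of the definitions (with parameters coming from previous stages, ordered recursively). Under V=L every real is constructible, so $<_L$ restricts to a well-ordering of $\N^\N$. The content to be extracted from~\cite{moschovakis-book} is that this restriction is $\Sigma^1_2$ without parameters. The standard argument: $x <_L y$ if and only if there exists a real $z$ coding a countable transitive model $\mathcal{N}$ of a suitable finite fragment of $\mathrm{ZFC}^-+\mathrm{V}{=}\mathrm{L}$ containing $x$ and $y$ such that $\mathcal{N}\models x<_L y$. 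The existence of $z$ is an existential real quantifier; that $z$ codes a well-founded extensional model is $\Pi^1_1$, and that it satisfies the required axioms and that $x,y$ are in it and compared as claimed is arithmetic in $z$. The Condensation Lemma guarantees that the truth value $\mathcal{N}\models x<_L y$ is independent of the particular $\mathcal{N}$, so the definition is unambiguous and, under V=L, correctly captures $<_L\!\restriction\!\N^\N$.

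Next I would import the well-ordering into $X$. By the definition in Section~\ref{ss:effective}, the points of $X$ are parameter-free coded as equivalence classes of Cauchy sequences in the enumerated dense set $D=\{x_n\}_{n\in\N}$, and each such Cauchy sequence is itself a parameter-free code in $\N^\N$ (equivalently in $\R$). The relation ``$a,b\in\N^\N$ code Cauchy sequences representing the same point of $X$'' is parameter-free analytical since the metric $d_X$ on $D$ is parameter-free definable. Define
$$
p <_X q \iff \text{the $<_L$-least code of $p$ is $<_L$ the $<_L$-least code of $q$.}
$$
Since $<_L$ is a well-ordering of $\N^\N$, the $<_L$-least representative of each equivalence class exists and is unique; ``$a$ is the $<_L$-least code for the point that $a$ represents'' is a $\Pi^1_2$ condition, and $<_L$ itself is $\Sigma^1_2$, so $<_X$ is analytical. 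Well-foundedness and trichotomy of $<_X$ are inherited directly from those of $<_L$ on $\N^\N$, giving the desired analytical well-ordering of $X$.

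The main obstacle is the $\Sigma^1_2$-definability step, which depends on the Condensation Lemma and on the absoluteness of $<_L$ between $L$ and countable transitive models satisfying a sufficient fragment of V=L; this is the substantive content of the classical Gödel--Solovay analysis, which I would simply quote from~\cite{moschovakis-book}. Everything else — the transport step and the verification that the induced relation on $X$ is analytical and well-founded — is routine bookkeeping with the parameter-free codes already developed in Section~\ref{ss:effective}.
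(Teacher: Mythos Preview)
The paper does not give its own proof of this proposition; it is stated with a citation to~\cite{moschovakis-book} and used as a black box. Your sketch is the standard argument (G\"odel's $\Sigma^1_2$ well-ordering of the reals under V=L, transported through the parameter-free coding of points of $X$ by Cauchy sequences) and is correct, so there is nothing to compare against.
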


\subsection{First-order rigidity}\label{ss:vl-fo-rigid}
For this section, let $M$ be fixed of dimension $d$,
and let $N$ be an arbitrary
connected manifold with $G_N=\Homeo(N)$.

\begin{thm}\label{thm:vl-forigid-body}
    Assume {V=L}. Then $G_M\equiv G_N$ if and only if $M$ and $N$ are
    homeomorphic manifolds.
\end{thm}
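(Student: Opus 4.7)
The plan is to leverage the analytical well-ordering on the Polish space $C(B,E)^{\Z}$ provided by V=L in order to single out from $G_M$ a canonical ``atlas coordinate'' for $M$ that is parameter-free definable in the pure group language, then to use elementary equivalence to force $M$ and $N$ to share the same such coordinate. As a preliminary step, by Theorem~\ref{thm:kkdln}~(7) and Corollary~\ref{cor:noncompact-interp} the dimension $d$ of $M$ is first-order expressible through the sentence $\dim_d$, so $G_M\equiv G_N$ immediately forces $\dim M=\dim N=d$; fix $E=\R^{d'}$ with $d'=2d+1$, so that every connected $d$-manifold embeds in $E$ via Whitney.

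Working inside the parameter-free interpretation of second-order arithmetic in $G_M$ (Theorem~\ref{thm:kkdln}~(2)), we access the analytical set $\mathfrak{m}_d\subseteq C(B,E)^{\Z}$ of Corollary~\ref{cor:seq-manifold}, together with the parameter-free analytical well-ordering $<_L$ of $C(B,E)^{\Z}$ supplied by V=L. The heart of the argument is to express in a parameter-free $\mL$-formula the predicate
\[
\Phi\big((f_n)\big)\;:\;(f_n)\in\mathfrak{m}_d \;\text{ and }\; \overline{\bigcup_{n\in\Z}f_n[B]}\cong M,
\]
which compares an object interpreted extrinsically as a subset of $E$ with the intrinsic manifold $M$ accessed through the sorts of points, regular open sets, and sequences (Theorem~\ref{thm:kkdln}, Corollary~\ref{cor:noncompact-sequence}). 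Following the template of Lemma~\ref{lem:projective-property}~(4), I would express the existence of a homeomorphism $\overline{\bigcup f_n[B]}\to M$ through a sequence $\big((r_n,p_n)\big)$ with $r_n$ in a dense subset of $\overline{\bigcup f_n[B]}\subseteq E$ and $p_n\in\mathrm{seq}^{\mathcal{G}}(M)$, the induced map extending to a bijection whose continuity in both directions is witnessed by the action on regular open sets in $M$ and on metric balls in $E$. Granting this, let $\psi(x)$ be the resulting parameter-free formula asserting ``$x$ is the $<_L$-least element of $\mathfrak{m}_d$ satisfying $\Phi$''; the existence of a minimum is guaranteed by Whitney embedding, so $\psi$ defines a unique element $\alpha_M\in\mathfrak{m}_d$ in $G_M$.

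For the conclusion, every element of $C(B,E)^{\Z}$ is determined by countably many pieces of data (rational approximations to $f_n$ at rational points of $B$), and for each such piece the statement ``the $k$-th bit of $\alpha_M$ equals $1$'' unwinds into a parameter-free sentence of $G_M$ in the pure group language, built from $\psi$ and the interpretation of arithmetic. Since $G_M\equiv G_N$, every such sentence has the same truth value in both groups, so the element $\alpha_N$ defined by $\psi$ in $G_N$ coincides externally with $\alpha_M$ as an actual element of $C(B,E)^{\Z}$. Then $\overline{\bigcup_n\alpha_M(n)[B]}\cong M$ by the defining property of $\psi$ in $G_M$, and $\overline{\bigcup_n\alpha_N(n)[B]}\cong N$ by the same property in $G_N$, whence $M\cong N$. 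The main obstacle in this plan is the rigorous verification that the extrinsic-intrinsic homeomorphism predicate built into $\Phi$ is genuinely parameter-free $\mL$-definable; all remaining steps are assembly of machinery already established in Theorem~\ref{thm:kkdln} and Section~\ref{ss:proj-defin}.
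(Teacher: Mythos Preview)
Your proposal is correct and follows the same strategy as the paper: isolate a parameter-free definable $<_L$-minimal code for $M$ in $C(B,E)^\Z$, then use elementary equivalence to force the minimal codes for $M$ and $N$ to coincide as actual points of the Polish space. The paper resolves the obstacle you flag—definability of the predicate ``$\overline{\bigcup f_n[B]}\cong M$''—not by directly encoding a homeomorphism via paired sequences, but by first restricting $(f_n)$ to share the overlap pattern of the charts $\tau^n\beta$ for a sufficiently transitive $\tau$ and collared ball $\beta$ (yielding canonical dense enumerations $\{d_n\}\subseteq M$ and $\{e_n\}$ in $E$ indexed by $\Q^d\times\Z$), and then imposing the regular-compact matching condition of Proposition~\ref{prop:top}; this is Lemma~\ref{lem:rec-define}, and it makes the predicate manifestly first-order in the already-interpreted sorts $\rc(M)$, $\mathrm{ball}(M)$, and second-order arithmetic.
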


Obviously we need only prove that if $G_M\equiv G_N$ then
$M\cong N$.
The first reduction we can make is to assume that $N$ also has
dimension $d$, as follows from Theorem~\ref{thm:kkdln}.

In general,
we will need to build a nexus between the first-order theory
of $G_M$ and the descriptive set theory of $C(B,E)^\Z$; this will
be provided almost entirely by Theorem~\ref{thm:kkdln} and
Theorem~\ref{thm:top-trans}.
We begin by fixing a uniform interpretation of second-order arithmetic,
which allows for direct access to all definably presented
Polish spaces, including $B=B^d(1)$ and
$C(B,E)^\Z$. We fix a uniform interpretation of $\mathrm{ball}(M)$,
and we let $\mathcal G(M)$ denote the (uniformly definable) set of pairs $(\tau,U)$ encoding
sufficiently transitive homeomorphisms of $M$ and $\tau$--dense submanifolds of $M$. We fix a $\beta\in
\mathrm{ball}(M)$ and $(\tau,\beta)\in \mathcal G(M)$ such that
\[M(\tau,\beta)=\bigcup_{n\in\Z} \tau^n\beta[B]\subseteq M\] is dense.
We let
$B_{\Q}\subseteq B$ denote the set of rational points (which
may be viewed as the countable subset giving the effective
presentation of $B$), and we set
\[D=D(\beta,\tau)=\bigcup_{n\in\Z} \tau^n\beta[B_{\Q}]\subseteq M.\]
By fixing a definable enumeration of $\Q^d\times\Z$, we obtain a fixed
enumeration of $D=\{d_n\}_{n\in\N}$.

From the data of $\tau$ and $\beta$, we obtain a (parameter-free
definable) family of elements
$\mathcal D(\tau,\beta)\subseteq C(B,E)^\Z$ consisting of sequences
$(f_n)$ such that:
\begin{enumerate}
    \item We have $(f_n)\in\mathfrak m_d$;
    \item For all $x,y\in B$ and all $i,j\in\Z$,
    we have $f_i(x)=f_j(y)$ if and only if 
    $\tau^i\beta[x]=\tau^j\beta[y]$.
\end{enumerate}

In particular, the set $\mathcal D(\tau,\beta)\subseteq C(B,E)^\Z$ 
consists of
sequences $(f_n)$ for which $\bigcup_{n\in\Z} f_n[B]$ is homeomorphic to
$M(\tau,\beta)$.

For $(f_n)\in C(B,E)^\Z$, we let \[D_{(f_n)}=\bigcup_{n\in\Z} f_n(B_{\Q}).\]
From the fixed enumeration of $\Q^d\times\Z$, we obtain an enumeration
$D_{(f_n)}=\{e_n\}_{n\in\N}$.

\begin{lem}\label{lem:rec-define}
    There is a nonempty $\mL$--definable subset $\mathcal M(\tau,\beta)\subseteq
    \mathcal D(\tau,\beta)$ consisting of sequences $(f_n)$ such that
    \[\overline{\bigcup_{n\in\Z} f_n[B]}\cong M.\]
\end{lem}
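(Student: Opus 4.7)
The plan is to define $\mathcal M(\tau,\beta)$ by encoding, as an $\mathcal L$-condition on $(f_n)$, the hypothesis of Proposition~\ref{prop:top} applied to the pair of dense subsets $\{d_n\}_{n\in\N}$ of $M$ and $\{e_n\}_{n\in\N}$ of $Y_{(f_n)}:=\overline{\bigcup_{n\in\Z}f_n[B]}$. Explicitly, I declare $(f_n)\in\mathcal M(\tau,\beta)$ iff, in addition to $(f_n)\in\mathcal D(\tau,\beta)$, the following holds: for every $C\in\rc(M)$ there exists a regular compact $K\subseteq Y_{(f_n)}$ (in the subspace topology) such that $d_n\in C\Leftrightarrow e_n\in K$ for all $n$, and conversely. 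Since $M$ and $Y_{(f_n)}$ are both separable, locally compact, complete metric spaces, Proposition~\ref{prop:top} then yields a homeomorphism $M\cong Y_{(f_n)}$ extending $d_n\mapsto e_n$, which is exactly what the lemma requires.

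To see that $\mathcal M(\tau,\beta)$ is $\mathcal L$-definable, I use that by Theorem~\ref{thm:kkdln} together with Corollary~\ref{cor:noncompact-interp}, the sort $\rc(M)$, the point sort of $M$, and the predicate ``$p\in C$'' are uniformly $\mathcal L$-interpretable in $G_M$; moreover, the enumeration $n\mapsto d_n$ is parameter-free definable from $(\tau,\beta)$ via our fixed enumeration of $\Z\times\Q^d$ together with the action of $\tau$ on $\beta[B_\Q]$. On the other side, $E=\R^{d'}$ and $C(B,E)^{\Z}$ are analytically presented Polish spaces that $G_M$ uniformly interprets, so by the results of Section~\ref{ss:proj-defin} the operation $(f_n)\mapsto Y_{(f_n)}$ (cf.\ Corollary~\ref{cor:seq-manifold} and item~(4) preceding Lemma~\ref{lem:projective-property}), the predicate that a compact $K\subseteq E$ lies in $Y_{(f_n)}$ and is regular in its subspace topology, and the enumeration $n\mapsto e_n$ are all analytical. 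Because full second-order arithmetic is uniformly interpreted in $G_M$, these analytical data pull back to an $\mathcal L$-formula with $(\tau,\beta,(f_n))$ free, defining $\mathcal M(\tau,\beta)$.

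For nonemptiness, I fix a closed topological embedding $\iota\colon M\hookrightarrow E$, which exists provided $d'$ is taken large enough in terms of $d$ (since $M$ is a $\sigma$-compact, metrizable $d$-manifold), and I set $f_n:=\iota\circ\tau^n\circ\beta$. Injectivity of $\iota$ gives $f_i(x)=f_j(y)\Leftrightarrow \tau^i\beta(x)=\tau^j\beta(y)$, so $(f_n)\in\mathcal D(\tau,\beta)$; and closedness of $\iota$ together with density of $M(\tau,\beta)$ in $M$ yields $Y_{(f_n)}=\overline{\iota(M(\tau,\beta))}=\iota(M)$, which is a connected $d$-submanifold of $E$. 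Under the homeomorphism $\iota\colon M\to\iota(M)$, regular compacts correspond bijectively, and $e_n=\iota(d_n)$ ensures that index membership matches on both sides, so $(f_n)\in\mathcal M(\tau,\beta)$.

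The main obstacle I anticipate is the verification that ``$K$ is a regular compact subset of $Y_{(f_n)}$ in its subspace topology'' is genuinely analytical uniformly in $(f_n)$; this is where one needs to be careful that regularity is taken inside $Y_{(f_n)}$ rather than inside $E$. This reduces to expressing the subspace interior and closure of $K\subseteq Y_{(f_n)}$ using only the analytical predicates attached to $(f_n)$ and to $E$, which is a routine but slightly finicky manipulation of projective definitions of the type already assembled in Section~\ref{ss:proj-defin}.
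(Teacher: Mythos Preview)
Your proof is correct and follows essentially the same approach as the paper: define $\mathcal M(\tau,\beta)$ by imposing the hypotheses of Proposition~\ref{prop:top} on the dense sets $\{d_n\}$ and $\{e_n\}$, argue definability via the interpreted sort $\rc(M)$ on one side and the analytical structure of $E$ on the other, and verify nonemptiness via an embedding of $M$ into $E$. You supply more detail than the paper does, in particular the care about closedness of the embedding and about regularity being taken in the subspace topology of $Y_{(f_n)}$, but the argument is the same.
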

\begin{proof}
    By Lemma~\ref{lem:locally-flat-euclidean-embedding}, the manifold $M$ embeds in $E$,
    and therefore such a sequence $(f_n)$ exists. Because the class of
    regular compact subsets of $E$ is parameter-free projective and because the
    sort of regular compact subsets of $M$ is uniformly $\mL$--interpretable,
    we may require that for every regular compact subset 
    \[C\subseteq \overline{\bigcup_{n\in\Z} f_n[B]}\cong M,\] there exists
    a regular compact $K\subseteq M$ such that for all $n$,
    we have $e_n\in C$ if and only if $d_n\in K$, and conversely
    switching the roles of $C$ and $K$. Then, the hypotheses of
    Proposition~\ref{prop:top} are satisfied, and the bijection
    $\theta\colon e_n\mapsto d_n$ extends to a homeomorphism 
    \[\Theta\colon \overline{\bigcup_{n\in\Z} f_n[B]}\longrightarrow
    M,\] as desired.
\end{proof}

We set \[\mathcal M(M)=\bigcup_{\tau,\beta}\mathcal M(\tau,\beta),\]
which is clearly definable, since $(f_n)\in\mathcal M(M)$ if and only
if \[\exists\tau\exists\beta\;(f_n)\in\mathcal M(\tau,\beta).\]
We think of $\mathcal M(M)$ as the set of
codes for $M$.
We can now establish first-order rigidity.

\begin{proof}[Proof of Theorem~\ref{thm:vl-forigid-body}]
    Suppose $G_M\equiv G_N$. By the dimension sentence in
    Theorem~\ref{thm:kkdln}, the manifolds $M$ and $N$ have the same
    dimension $d$. Assume {V=L}, and fix the parameter-free projective
    well-ordering $<_{L}$ of the code space $C(B,E)^\Z$ induced by the
    canonical well-ordering of $L$.

    For any connected $d$-manifold $P$, let $c(P)$ denote the
    $<_{L}$-least element of the interpreted set $\mathcal M(P)$ of codes
    for $P$. This is meaningful because $\mathcal M(P)$ is nonempty by
    Lemma~\ref{lem:rec-define}, and under {V=L} the code space and the
    definable set $\mathcal M(P)$ are well-ordered by $<_{L}$.

    We claim that $M\cong N$ if and only if $c(M)=c(N)$. If
    $c(M)=c(N)$, then both manifolds are homeomorphic to the same embedded
    manifold coded by this common element. Conversely, if
    $\Phi\colon M\to N$ is a homeomorphism, then conjugating the auxiliary
    data $(\tau,\beta)$ by $\Phi$ shows that $\mathcal M(M)=\mathcal M(N)$;
    hence the $<_{L}$-least elements agree.

    Let $\{\mathcal U_i\}_{i\in\N}$ be the fixed recursive basis for the
    topology of $C(B,E)^\Z$. For each $i$, let $\chi_i$ be the translation
    into the group language of the following assertion in the uniformly
    interpreted code sort:
    \[
    \exists a\left(a\in\mathcal U_i\wedge a\in\mathcal M(\cdot)\wedge
    \forall b\,(b<_{L}a\rightarrow b\notin\mathcal M(\cdot))\right).
    \]
    Thus $<_{L}$ is not added to the group language; it is used only inside
    the uniformly interpreted second-order-arithmetic/code apparatus, and
    the whole displayed assertion translates to an ordinary first-order
    $\mL$-sentence.

    Since $G_M\equiv G_N$, for every $i$ we have
    \[
    G_M\models\chi_i\quad\Longleftrightarrow\quad G_N\models\chi_i.
    \]
    The basis $\{\mathcal U_i\}_{i\in\N}$ separates points in the code
    space, so $c(M)=c(N)$. Therefore $M\cong N$.
\end{proof}

\subsection{Type rigidity}\label{ss:vl-type}

Now, let $(M,g)$ be a fixed pair, where $M$ is an abstract
connected manifold of dimension $d$ and where $g\in\Homeo(M)$.

\begin{thm}\label{thm:vl-type-rigid-body}
    Suppose {V=L} and let $h\in\Homeo(M)$ be arbitrary. Then
    $\tp(g)=\tp(h)$ if and only if $g$ and $h$ are conjugate in
    $\Homeo(M)$.
\end{thm}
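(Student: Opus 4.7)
The plan is to imitate the proof of Theorem~\ref{thm:vl-forigid-body}, upgrading the coding of $M$ alone to a coding of the pair $(M, g)$. The forward direction — that $\Homeo(M)$-conjugate elements share a type — is immediate because inner automorphisms preserve types; so the work lies in showing that $\tp(g) = \tp(h)$ forces $g$ and $h$ to be conjugate. Assuming {V=L}, fix a parameter-free analytical well-ordering $\prec$ of the product space $C(B,E)^{\Z} \times C(B,E)^{\Z}$; this is accessible inside $\Homeo(M)$ via the uniform parameter-free interpretation of second-order arithmetic guaranteed by Theorem~\ref{thm:kkdln}.

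Combining $\mathrm{mark}_d$ from Proposition~\ref{prop:mark-projective}, the conjugacy predicate $\mathrm{conj}_d$ from Proposition~\ref{prop:conj-projective}, and the construction of $\mathcal{M}(M)$ from Lemma~\ref{lem:rec-define}, I would define, for each $v \in \Homeo(M)$ treated as a parameter, an analytical set
\[
\mathcal{M}(M, v) \subseteq \mathrm{mark}_d
\]
consisting of pairs $((f_n), (g_n))$ such that $(f_n) \in \mathcal{M}(M)$ (so that $\overline{\bigcup_n f_n[B]}$ is canonically homeomorphic to $M$ via Proposition~\ref{prop:top}) and the homeomorphism of $\overline{\bigcup_n f_n[B]}$ whose graph is implicit in the marked pair $((f_n), (g_n))$, once transported to $M$ through the canonical identification from Lemma~\ref{lem:rec-define}, is $\Homeo(M)$-conjugate to $v$. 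By construction, $\mathcal{M}(M, v)$ is nonempty for every $v$ (take any Whitney embedding of $M$ into $E$ and compose with any representative of the conjugacy class of $v$) and depends only on the $\Homeo(M)$-conjugacy class of $v$. Let $\mu(v)$ be its $\prec$-least element.

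If $\mu(g) = \mu(h)$, then a single pair $((f_n), (g_n))$ witnesses that its encoded homeomorphism is simultaneously $\Homeo(M)$-conjugate to both $g$ and $h$, and hence $g$ and $h$ are conjugate to one another. The remaining step is to show that $\tp(g) = \tp(h)$ forces $\mu(g) = \mu(h)$. For each basis element $\mathcal{U}_i \times \mathcal{U}_j$ of $C(B,E)^{\Z} \times C(B,E)^{\Z}$, the statement ``$\mu(v) \in \mathcal{U}_i \times \mathcal{U}_j$'' is analytical in $v$ — expressible as ``there exists $x \in \mathcal{M}(M, v) \cap (\mathcal{U}_i \times \mathcal{U}_j)$ such that no $\prec$-earlier element lies in $\mathcal{M}(M, v)$'' — and hence, via the uniform parameter-free interpretation of second-order arithmetic inside $\Homeo(M)$, corresponds to an $\mL$-formula $\chi_{i,j}(v)$. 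If $\tp(g) = \tp(h)$, then $g$ and $h$ satisfy the same collection of $\chi_{i,j}$, so $\mu(g)$ and $\mu(h)$ lie in precisely the same basic open sets, forcing $\mu(g) = \mu(h)$ by Hausdorffness.

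The main obstacle, just as in the first-order rigidity case, is verifying that the predicate ``the homeomorphism encoded by $((f_n),(g_n))$ is conjugate in $\Homeo(M)$ to $v$'' is genuinely analytical in $v$ and correctly reflects abstract conjugacy in the home sort: one must carefully check that the canonical identification $\overline{\bigcup f_n[B]} \cong M$ furnished by Proposition~\ref{prop:top} intertwines embedded conjugacy (captured by $\mathrm{conj}_d$) with conjugacy in the abstract group $\Homeo(M)$, so that the quantifier over embedded conjugating homeomorphisms faithfully tracks the conjugacy class of $v$. Once this is in place, the rest of the argument is formal and parallels Section~\ref{ss:vl-fo-rigid} verbatim.
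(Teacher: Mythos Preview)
Your proposal is correct and follows the same overall strategy as the paper: encode the pair $(M,g)$ as a point in an analytically presented Polish space, use the {V=L} well-ordering to select a minimal code, and show that type equality pins down that minimal code via the basis formulae $\chi_{i,j}$. The one substantive difference is in how the code set is built. You define $\mathcal{M}(M,v)$ via $\mathrm{mark}_d$ and $\mathrm{conj}_d$, requiring that the embedded homeomorphism, once transported through the identification of Lemma~\ref{lem:rec-define}, be conjugate to $v$; you then correctly flag the need to check that this transported-conjugacy condition is $\mL$-expressible with $v$ as a home-sort parameter. The paper sidesteps exactly this obstacle: rather than invoking $\mathrm{conj}_d$, it defines $\mathcal{M}(\tau,\beta,g)$ by demanding that the second atlas $(h_n)$ be the first atlas $(f_n)$ twisted by $g$ \emph{directly}, via conditions of the form ``$h_n(x)$ relates to $f_m(y)$ exactly as $g\tau^n\beta[x]$ relates to $\tau^m\beta[y]$,'' which are immediately expressible since $g$, $\tau$, $\beta$ and the evaluation predicate all live in already-interpreted sorts. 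Conjugacy invariance of $\mathcal{M}(g)=\bigcup_{\tau,\beta}\mathcal{M}(\tau,\beta,g)$ is then proved as a separate proposition (Proposition~\ref{prop:code-conj}) rather than built into the definition. Both routes work; the paper's is slightly cleaner precisely because it avoids the verification you flagged as the main obstacle.
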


We now refine the coding from Subsection~\ref{ss:vl-fo-rigid} in order to
code a homeomorphism together with the underlying manifold. Let
\[a=((f_n),(u_n))\in \mathrm{mark}_d.\]
Write
\[
X_a=\overline{\bigcup_{n\in\Z} f_n[B]}
     =\overline{\bigcup_{n\in\Z} u_n[B]}.
\]
Since the first union is dense in $X_a$, the graph witnessing
$a\in\mathrm{mark}_d$ is unique; we denote by
\[F_a\colon X_a\longrightarrow X_a\]
the corresponding homeomorphism, so that $F_a(f_n(t))=u_n(t)$ for all
$n\in\Z$ and $t\in B$.

Fix once and for all a recursive enumeration
$m\mapsto (i_m,q_m)$ of $\Z\times (B\cap\Q^d)$. If $(\tau,\beta)$ is as in
Subsection~\ref{ss:vl-fo-rigid}, put
\[
d_m^{\tau,\beta}=\tau^{i_m}\beta[q_m]\in M,\qquad
 e_m^a=f_{i_m}(q_m)\in X_a,\qquad
 r_m^a=u_{i_m}(q_m)\in X_a.
\]
Thus $\{d_m^{\tau,\beta}\}_{m\in\N}$ is the dense set in $M$ associated to
$(\tau,\beta)$, while $\{e_m^a\}_{m\in\N}$ is its counterpart in the embedded
model $X_a$.

For $C\in\rc(X_a)$ and $K\in\rc(M)$, say that $C$ and $K$ are
$(a,\tau,\beta)$-matched if
\[
\forall m\in\N\quad\big(e_m^a\in C \Longleftrightarrow
 d_m^{\tau,\beta}\in K\big).
\]
By the definition of $\mathcal M(\tau,\beta)$ and Proposition~\ref{prop:top},
matched regular compact sets are precisely the regular compact sets
corresponding under the homeomorphism
\[\Theta_{a,\tau,\beta}\colon X_a\longrightarrow M\]
which extends $e_m^a\mapsto d_m^{\tau,\beta}$.

For $x\in G_M$, define
\[\mathcal C(\tau,\beta,x)\subseteq C(B,E)^\Z\times C(B,E)^\Z\]
to consist of all $a=((f_n),(u_n))$ such that:
\begin{enumerate}
    \item $(f_n)\in \mathcal M(\tau,\beta)$;
    \item $a\in\mathrm{mark}_d$;
    \item for every $(a,\tau,\beta)$-matched pair
    $C\in\rc(X_a)$ and $K\in\rc(M)$, and every $m\in\N$, one has
    \[
    r_m^a\in C \Longleftrightarrow x\cdot d_m^{\tau,\beta}\in K.
    \]
\end{enumerate}
Finally set
\[\mathcal C(x)=\bigcup_{\tau,\beta}\mathcal C(\tau,\beta,x).\]
The elements of $\mathcal C(x)$ will be called marked codes for $x$.

The relation $a\in\mathcal C(x)$ is uniformly $\mL$-definable with $x$ as a
parameter. Indeed, the conditions $(f_n)\in\mathcal M(\tau,\beta)$,
$a\in\mathrm{mark}_d$, membership in regular compact sets, the action of
$G_M$ on the point sort, and the parametrization of rational points of
collared balls are all uniformly interpretable in $G_M$. Clause (3) merely
expresses, using regular compact sets, that
\[
\Theta_{a,\tau,\beta}F_a(e_m^a)=x\Theta_{a,\tau,\beta}(e_m^a)
\]
for the dense set $\{e_m^a\}_{m\in\N}$. Hence, by continuity,
\[
\Theta_{a,\tau,\beta}F_a\Theta_{a,\tau,\beta}^{-1}=x.
\]

Moreover, $\mathcal C(x)\neq\varnothing$ for every $x\in G_M$. To see this,
choose $(\tau,\beta)$ and an embedding $j\colon M\to E$, and put
\[
f_n=j\circ \tau^n\circ \beta,\qquad
u_n=j\circ x\circ \tau^n\circ \beta.
\]
Then $a=((f_n),(u_n))$ is a marked code for $x$, with $F_a=jxj^{-1}$ and
$\Theta_{a,\tau,\beta}=j^{-1}$.

\begin{prop}\label{prop:marked-codes-conjugacy}
Let $x,y\in G_M$. Then
\[
\mathcal C(x)\cap \mathcal C(y)\neq \varnothing
\]
if and only if $x$ and $y$ are conjugate in $G_M$. In fact, if $x$ and $y$
are conjugate, then $\mathcal C(x)=\mathcal C(y)$.
\end{prop}

\begin{proof}
Suppose first that $a\in\mathcal C(x)\cap\mathcal C(y)$. Choose witnesses
$(\tau_x,\beta_x)$ and $(\tau_y,\beta_y)$, and write
\[
\Theta_x=\Theta_{a,\tau_x,\beta_x},\qquad
\Theta_y=\Theta_{a,\tau_y,\beta_y}.
\]
By the defining property of marked codes,
\[
x=\Theta_xF_a\Theta_x^{-1}
\quad\text{and}\quad
y=\Theta_yF_a\Theta_y^{-1}.
\]
Therefore $\phi=\Theta_y\Theta_x^{-1}\colon M\to M$ is a homeomorphism
satisfying $\phi x\phi^{-1}=y$. Thus $x$ and $y$ are conjugate.

Conversely, suppose $y=\phi x\phi^{-1}$. If
$a\in \mathcal C(\tau,\beta,x)$, then the same marked code $a$ belongs to
\[
\mathcal C(\phi\tau\phi^{-1},\,\phi\circ\beta,\,y),
\]
since the identifying homeomorphism $X_a\to M$ is simply changed from
$\Theta_{a,\tau,\beta}$ to $\phi\Theta_{a,\tau,\beta}$. Hence
$\mathcal C(x)\subseteq\mathcal C(y)$, and the reverse inclusion follows by
applying the same argument to $\phi^{-1}$. Thus $\mathcal C(x)=\mathcal C(y)$.
\end{proof}

\begin{proof}[Proof of Theorem~\ref{thm:vl-type-rigid-body}]
If $g$ and $h$ are conjugate in $G_M$, then $\tp(g)=\tp(h)$, since
conjugation by an element of $G_M$ is an automorphism of the group $G_M$.

Conversely, assume $\tp(g)=\tp(h)$. By {V=L}, fix the parameter-free
projective well-ordering $<_{L}$ of the Polish code space
\[C(B,E)^\Z\times C(B,E)^\Z.\]
For $x\in G_M$, let $c(x)$ denote the $<_{L}$-least element of
$\mathcal C(x)$. This is meaningful because $\mathcal C(x)$ is nonempty.

Let $\{\mathcal V_i\}_{i\in\N}$ be a recursively enumerated basis for the
topology on $C(B,E)^\Z\times C(B,E)^\Z$. For each $i$, let $\chi_i(v)$ be
the $\mL$-formula saying that the $<_{L}$-least marked code for $v$ lies in
$\mathcal V_i$; explicitly, in the interpreted code sort this says
\[
\exists a\left(
 a\in \mathcal V_i\wedge a\in\mathcal C(v)\wedge
 \forall b\,(b<_{L}a\rightarrow b\notin\mathcal C(v))
\right).
\]
Here $<_{L}$, the basis $\{\mathcal V_i\}$, and the relation
$a\in\mathcal C(v)$ are all interpreted without adding symbols to the group
language; after translation through the uniform interpretation, each $\chi_i$
is an ordinary one-variable $\mL$-formula.

Since $\tp(g)=\tp(h)$, we have
\[
G_M\models \chi_i(g)\leftrightarrow \chi_i(h)
\]
for every $i$. The basis $\{\mathcal V_i\}_{i\in\N}$ separates points, so
$c(g)=c(h)$. Hence $\mathcal C(g)\cap\mathcal C(h)\neq\varnothing$, and
Proposition~\ref{prop:marked-codes-conjugacy} implies that $g$ and $h$ are
conjugate.
\end{proof}


\section{Infinitary logic and rigidity}
From now on, we will no longer assume that {V=L}.
However, the arguments needed to establish
Theorem~\ref{thm:infinitary-homeo}
and Theorem~\ref{thm:infinitary-type} are very similar to those
used to establish rigidity under {V=L}. Constructibility allows
us to pick out a canonical code for a manifold or for a manifold
marked with a homeomorphism, whereas infinitary logics allow us to completely describe arbitrary
codes with a single sentence. For the
remainder of this section, we retain the setup from Subsections
~\ref{ss:vl-fo-rigid} and~\ref{ss:vl-type}.

\begin{proof}[Proof of Theorem~\ref{thm:infinitary-homeo}]
    Suppose $M$ has dimension $d$.
    Fix a code \[(f_n)\in\mathcal M(\tau,\beta)\subseteq
    \mathcal M(M),\] and let $\chi_i(x)$ be the
    $\mathcal L$--formula expressing that an element $(h_n)\in
    C(B,E)^\Z$ lies in the basis element $\mathcal U_i$. Let
    $I=\{i\mid (f_n)\in \mathcal U_i\}$. We set $\phi(x)$ to be the
    conjunction
    \[\phi\colon \quad \bigwedge_{i\in I} \chi_i(x)\wedge \bigwedge_{j\notin I} \neg\chi_j(x).\] Then, if $N$ is an arbitrary connected manifold,
    we have 
    $$
    G_N\models \dim_d\;\;\wedge\;\; \exists\tau\;\exists\beta\;\exists
    (g_n)\;\Big((g_n)\in\mathcal M(\tau,\beta)\;\wedge\; \phi\big((g_n)\big)\Big)
    $$
    if and only if $N$ has
    dimension $d$ and $N$ admits $(f_n)$ as a code, in which case
    $M\cong N$.
\end{proof}

\begin{proof}[Proof of Theorem~\ref{thm:infinitary-type}]
    Let $g\in G_M$, and fix a marked code
    \[a=((f_n),(u_n))\in \mathcal C(\tau,\beta,g)\subseteq \mathcal C(g).
    \]
    Let $\{\mathcal V_i\}_{i\in\N}$ be a recursive basis for the topology on
    $C(B,E)^\Z\times C(B,E)^\Z$, and let $\chi_i(z)$ be the $\mL$--formula
    expressing, in the interpreted code sort, that a marked code $z$ lies in
    $\mathcal V_i$. Let
    \[I=\{i\in\N\mid a\in\mathcal V_i\}.\]
    The address of $a$ is described by the $L_{\omega_1\omega}$ conjunction
    \[
    \Phi(z)\colon\quad
    \bigwedge_{i\in I}\chi_i(z)\wedge
    \bigwedge_{j\notin I}\neg\chi_j(z).
    \]
    Now define $\psi_g(y)$ to be the $L_{\omega_1\omega}$ formula
    \[
    \exists\tau\;\exists\beta\;\exists z\;
    \big(z\in\mathcal C(\tau,\beta,y)\wedge \Phi(z)\big).
    \]
    If $h\in G_M$ and $G_M\models\psi_g(h)$, then $a$ is a marked code for
    $h$ as well as for $g$, so
    $\mathcal C(g)\cap\mathcal C(h)\neq\varnothing$. By
    Proposition~\ref{prop:marked-codes-conjugacy}, $g$ and $h$ are conjugate.
    Conversely, if $h$ is conjugate to $g$, then
    Proposition~\ref{prop:marked-codes-conjugacy} gives
    $\mathcal C(g)=\mathcal C(h)$, so the same marked code $a$ witnesses
    $G_M\models\psi_g(h)$.
\end{proof}


\section{The Baire property and non-classification}
In this section, we will show that under $\mathsf{PBP}$ there are pairs of
connected, noncompact manifolds with empty boundary whose homeomorphism groups
are elementarily equivalent but which are not homeomorphic to each other.
Similarly, we will show that two homeomorphisms of the same manifold can have
the same type and not be conjugate; this will establish
Theorems~\ref{thm:pd-notrigid} and~\ref{thm:pd-not-type-rigid}.

\subsection{Non-homeomorphic surfaces with elementarily equivalent homeomorphism groups}\label{ss:surfac-ee}
Recall that, if $X$ is a compact metrizable space, $K(X)$ denotes the hyperspace of all non-empty closed subsets of $X$ equipped with the  Vietoris topology.

In the following, let $S^2$ denote the $2$-sphere and fix a homeomorphic embedding 
$$
2^\mathbb N\overset\iota\longrightarrow S^2
$$
of the Cantor space $2^\mathbb N=\{0,1\}^\mathbb N$ into $S^2$. Let also $\mathcal C=\iota[2^\N]$ denote its image, whereby $S^2\setminus \mathcal C$ is the sphere with a Cantor set of punctures, also known as the {\em Cantor tree surface}.
We will decorate $S^2\setminus \mathcal C$ by appropriately attaching handles in such a way that under $\mathsf{PBP}$ two such decorated surfaces will end up having elementarily equivalent homeomorphism groups, but nevertheless will be non-homeomorphic. We now proceed to describe this decoration.

For all finite binary strings $t\in 2^{<\mathbb N}$, fix a closed disk $D_t\subseteq S^2 \setminus \mathcal C$ with boundary curve $C_t$ so that
\begin{enumerate}
    \item $D_t\cap \overline {\bigcup_{s\neq t}D_s}=\emptyset$, 
    \item $\overline {\bigcup_{t\in 2^{<\mathbb N}}D_t}=\mathcal C\cup \bigcup_{t\in 2^{<\mathbb N}}D_t$,
    \item for all $\alpha\in 2^\mathbb N$,  the sequence of closed sets $\big(D_{\alpha|n}\big)_{n=1}^\infty$ converges to $\{\iota(\alpha)\}$ in the Vietoris topology on $K(S^2)$.
\end{enumerate}

We let $M$ be the compact metrizable space with $S^2\subseteq M$ obtained by attaching a handle $H_t$ to $S^2$ along each boundary curve $C_t$ in such a way that
\begin{enumerate}
    \item $H_t\cap \overline {\bigcup_{s\neq t}H_s}=\emptyset$, 
    \item $\overline {\bigcup_{t\in 2^{<\mathbb N}}H_t}=\mathcal C\cup \bigcup_{t\in 2^{<\mathbb N}}H_t$,
    \item for all $\alpha\in 2^\mathbb N$,  the sequence of closed sets $\big(H_{\alpha|n}\big)_{n=1}^\infty$ converges to $\{\iota(\alpha)\}$ in the Vietoris topology on $K(M)$.
\end{enumerate}
Thus $D_t\cap H_t=C_t$  for all $t\in 2^{<\N}$.

We let 
$$
P=S^2\setminus \overline {\bigcup_{t\in 2^{<\mathbb N}}D_t} =M\setminus \overline {\bigcup_{t\in 2^{<\mathbb N}}\big(D_t\cup H_t\big)}
$$
and note that $P$ is an open subset of $S^2$ and $M$.

For $t\in 2^{<\mathbb N}$, let 
$$
N_t=\{\alpha\in 2^\mathbb N\colon t \text{ is an initial segment of } \alpha\},
$$
which is a clopen subset of $2^\mathbb N$. Thus, for every $t\in 2^{<\mathbb N}$, the set
$$
\{F\in K(2^\mathbb N)\colon F\cap N_t\neq \emptyset\}
$$
is clopen. Also, for $F\in K(2^\mathbb N)$, we let 
$$
S_F=P\;\cup\;  \bigcup_{F\cap N_t\neq \emptyset}H_t\;\cup\; \bigcup_{F\cap N_t= \emptyset}D_t
$$
and note that $S_F$ is a  decorated Cantor tree surface where an end $\iota(\alpha)$ with $\alpha\in 2^\mathbb N$ is accumulated by genus if and only if $\alpha\in F$. By the homeomorphic classification of surfaces \cite{Kerekjarto, Richards-surfaces}, it follows that two such surfaces $S_F$ and $S_{F'}$ are homeomorphic if and only if there exists a homeomorphism $h$ of the full Cantor space $2^\N$ so that  $h[F]=F'$.

Note also that, for all closed subsets $F\subseteq 2^\N$,  $S_F \subseteq M$ and that $\overline{S_F}$ is the disjoint union of $S_F$ and $\mathcal C$. We thus see that, for any two closed sets $F,F'\subseteq 2^\N$, the following two properties are equivalent.
\begin{enumerate}
    \item There is a homeomorphism $h$ of Cantor space $2^\N$ so that  $h[F]=F'$,
    \item there is a homeomorphism $h$ of $\overline{S_F}$ with $\overline{S_F'}$ so that $h[\mathcal C]=\mathcal C$.
\end{enumerate}

Let $Q=[0,1]^\N$ denote the Hilbert cube and recall the notion of $Z$-sets in $Q$. First of all, the collection of $Z$-sets  forms an ideal of closed subsets of $Q$ with the property that any homeomorphism $L\overset h \longrightarrow L'$ between two $Z$-sets extends to a homeomorphism $Q\overset{\tilde h} \longrightarrow Q$. Furthermore, $Q$ may be homeomorphically embedded into itself as a $Z$-set. As these are the only facts needed for our construction, we need not worry about the exact definition of $Z$-sets and instead refer the reader to 
\cite[Chapter 5]{vanMill} for further details.

Because every compact metric space embeds into $Q$, we can, by composing with the embedding of $Q$ into $Q$ as a $Z$-set, suppose that $M$ is itself a $Z$-subset of $Q$. In particular, this means that, for any two closed subsets \[L, L'\subseteq M\subseteq Q\] and every homeomorphism $L\overset{ h} \longrightarrow L'$, there is a homeomorphism $\tilde h$ of $Q$ extending $h$. It follows that (1) and (2) above are equivalent with
\begin{enumerate}
    \item [(3)] there is a homeomorphism $h$ of $Q$ with $h[\overline{S_F}]=\overline{S_F'}$ and  $h[\mathcal C]=\mathcal C$.
\end{enumerate}

Observe that the set
$$
\mathbb X=\Mgd{(L,h)\in K(Q)\times \Homeo(Q) }{h[L]=L \;\&\; h[\mathcal C]=\mathcal C }
$$
is closed and the map
$$
F\in K(2^\mathbb N)\mapsto \overline{S_F}\in K(Q)
$$
is continuous. It  follows from this that the set
$$
\mathbb H=\MGD{(F,h)\in K(2^\mathbb N)\times \Homeo(Q) }
{ h[\overline{S_F}]=\overline{S_F}\;\&\; h[\mathcal C]=\mathcal C }
$$
is closed.

\begin{prop}\label{prop:proj}
Suppose that $\phi$ is a first-order sentence in the language of group theory. 
Then
$$
\Mgd{F\in K(2^\mathbb N)}
{ \Homeo(S_F)\models \phi}
$$
is a projective set.
\end{prop}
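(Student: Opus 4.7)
The plan is to uniformly parametrize $\Homeo(S_F)$ by a Polish space and then read off satisfaction of $\phi$ as a projective predicate in $F$. Since the ends of the decorated surface $S_F$ are canonically in bijection with $\mathcal C$ via $\iota$, every $h\in \Homeo(S_F)$ extends uniquely to a homeomorphism $\overline h$ of $\overline{S_F}=S_F\cup\mathcal C$ with $\overline h[\mathcal C]=\mathcal C$; on $\mathcal C$, the map $\overline h$ acts via the induced self-homeomorphism of the end space. Because $\overline{S_F}$ is a closed subset of the $Z$-set $M\subseteq Q$, the $Z$-set extension property recalled in the excerpt supplies some $\tilde h\in\Homeo(Q)$ extending $\overline h$; such $\tilde h$ lies in the fiber
$$
\mathbb H_F=\mgd{H\in\Homeo(Q)}{(F,H)\in\mathbb H}.
$$
Conversely, any $H\in\mathbb H_F$ restricts to an element of $\Homeo(S_F)$, and since restriction commutes with composition and inversion, the restriction map $\pi_F\colon \mathbb H_F\twoheadrightarrow \Homeo(S_F)$ is a surjective group homomorphism (with $\mathbb H_F$ inheriting its group structure from $\Homeo(Q)$).

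Next, I would gather the relevant descriptive set-theoretic uniformities. Equip $\Homeo(Q)$ with its Polish compact-open topology; composition and inversion are continuous, and $\mathbb H$ is closed in $K(2^{\mathbb N})\times\Homeo(Q)$ by the excerpt. The assignment $F\mapsto \overline{S_F}\in K(Q)$ is continuous, and for any $K\in K(Q)$ and $H_1,H_2\in\Homeo(Q)$, the relation $H_1|_K=H_2|_K$ is closed in $K(Q)\times\Homeo(Q)^2$ by joint continuity of evaluation together with Vietoris convergence. Consequently
$$
R=\MGD{(F,H_1,H_2)\in K(2^{\mathbb N})\times\Homeo(Q)^2}{H_1|_{\overline{S_F}}=H_2|_{\overline{S_F}}}
$$
is closed in its ambient Polish space, and since $S_F$ is dense in $\overline{S_F}$, the condition $\pi_F(H_1)=\pi_F(H_2)$ is equivalent to $(F,H_1,H_2)\in R$.

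Finally, I would write $\phi$ in prenex normal form $\mathrm Q_1 x_1\cdots \mathrm Q_k x_k\,\psi(x_1,\ldots,x_k)$, with $\psi$ a Boolean combination of equalities between group words. Surjectivity of $\pi_F$ allows each quantifier $\mathrm Q_i x_i$ ranging over $\Homeo(S_F)$ to be replaced by $\mathrm Q_i H_i\in \mathbb H_F$ ranging over the closed set $\mathbb H_F\subseteq\Homeo(Q)$. Each atomic equality $w_1(x_1,\ldots,x_k)=w_2(x_1,\ldots,x_k)$ translates, via $\pi_F$, into $(F,w_1(H_1,\ldots,H_k),w_2(H_1,\ldots,H_k))\in R$, a closed (hence Borel) condition since group words are built by continuous composition and inversion in $\Homeo(Q)$. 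Therefore
$$
\Mgd{F\in K(2^{\mathbb N})}{\Homeo(S_F)\models\phi}
$$
is the result of a finite alternation of quantifiers over the Polish space $\Homeo(Q)$ applied to a Borel predicate, hence projective, with complexity $\Sigma^1_k$ or $\Pi^1_k$ matching the leading quantifier of $\phi$.

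The main obstacle is not any single deep ingredient but the bookkeeping required to make the parametrization $\pi_F$ genuinely uniform in $F$: one needs the end-space identification, the $Z$-set extension property, and the continuity of $F\mapsto \overline{S_F}$ to cooperate so that every atomic formula in $\phi$ pulls back to a Borel relation on $K(2^{\mathbb N})\times\Homeo(Q)^k$. Once these pieces are in place, the standard translation of first-order logic into the projective hierarchy delivers the conclusion.
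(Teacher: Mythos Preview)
Your proof is correct and follows essentially the same strategy as the paper: parametrize $\Homeo(S_F)$ by the closed fiber $\mathbb H_F\subseteq\Homeo(Q)$ via the surjective restriction map $\pi_F$, put $\phi$ in prenex form, and translate quantifiers and atomic formulas into projective conditions on $K(2^{\mathbb N})\times\Homeo(Q)^k$. Your treatment is in fact slightly sharper in places: you justify the surjectivity of $\pi_F$ explicitly via the end compactification (the paper simply asserts the extension exists), and by observing that equality on the dense subset $S_F$ coincides with equality on the compact $\overline{S_F}$ you obtain that the atomic relation $R$ is closed, whereas the paper only records it as coanalytic.
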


\begin{proof}
By rewriting $\phi$ in prenex form with matrix in conjunctive normal form, we see that $\phi$ is equivalent to a sentence of the form
$$
\exists x_1\; \forall x_2 \;\cdots\; \exists x_{n-1}\; \forall x_n\; \bigwedge_{i=1}^p\bigvee_{j=1}^q \big(w_{ij}=u_{ij}\big)^{\epsilon_{ij}},
$$
where $w_{ij}(\overline x)$ and $u_{ij}(\overline x)$ are two words in the language of group theory and $\epsilon_{ij}\in\{-1,1\}$. Here $(w=u)^1$ designates the formula $w=u$ itself, whereas $(w=u)^{-1}$ designates its negation $w\neq u$.

Observe that, for every $F\in K(2^\mathbb N)$, the vertical section
$$
\mathbb H_F=\Mgd{h\in \Homeo(Q) }
{ h[\overline{S_F}]=\overline{S_F}\;\&\; h[\mathcal C]=\mathcal C } 
$$
is a closed subgroup of $\Homeo(Q)$.
Every homeomorphism of $S_F$ extends to the Freudenthal end compactification, and
hence to a homeomorphism of $\overline{S_F} = S_F \sqcup \mathcal C$, preserving $\mathcal C$.
Thus, the restriction to $S_F$ defines a continuous epimorphism 
$$
\mathbb H_F\overset{\pi_F}\longrightarrow \Homeo(S_F).
$$
Therefore, the statement $\phi$ holds in $\Homeo(S_F)$ if and only if 
\[\begin{split}
    &\exists f_1\in \mathbb H_F\; \forall f_2\in \mathbb H_F \;\cdots\; \exists f_{n-1}\in \mathbb H_F\; \forall f_n\in \mathbb H_F\\
&\bigwedge_{i=1}^p\bigvee_{j=1}^q \Big(w_{ij}\big(\pi_F(f_1),\ldots,\pi_F(f_n)\big)=u_{ij}\big(\pi_F(f_1),\ldots,\pi_F(f_n)\big)\Big)^{\epsilon_{ij}}.
\end{split}\]
Suppose now that $w(x_1,\ldots, x_n)$ and $u(x_1,\ldots, x_n)$ are two words in the language of group theory. We let $  [w=u]$ denote the set of all tuples 
$$
(f_1,\ldots, f_n, F)\in  K(2^\mathbb N)\times \Homeo(Q)^n 
$$
satisfying
$$
w\big(\pi_F(f_1),\ldots,\pi_F(f_n)\big)=u\big(\pi_F(f_1),\ldots,\pi_F(f_n)\big),
$$
that is, so that
$$
\forall z\in Q\;\Big( z\in S_F\;\to\; w\big(f_1,\ldots,f_n\big)(z)=u\big(f_1,\ldots,f_n\big)(z)\Big).
$$
By the latter expression, we find that $[w=u]$ is a coanalytic subset of $K(2^\mathbb N)\times \Homeo(Q)^n$. Expanding the quantifiers $\exists f\in \mathbb H_F$ and $\forall f\in \mathbb H_F$ as 
$$
\exists f\; \big( (F,f)\in \mathbb H \;\;\&\;\; \ldots\;\big) \quad\text{and}\quad  \forall f\; \big( (F,f)\in \mathbb H \;\rightarrow\; \ldots\;\big),
$$
we finally see that 
\[\begin{split}
    &\exists f_1\in \mathbb H_F\; \forall f_2\in \mathbb H_F \;\cdots\; \exists f_{n-1}\in \mathbb H_F\; \forall f_n\in \mathbb H_F\\
&\bigwedge_{i=1}^p\bigvee_{j=1}^q \Big(w_{ij}\big(\pi_F(f_1),\ldots,\pi_F(f_n)\big)=u_{ij}\big(\pi_F(f_1),\ldots,\pi_F(f_n)\big)\Big)^{\epsilon_{ij}}
\end{split}\]
defines a projective condition on $F$. In other words, the set
$$
\{F\in K(2^\mathbb N)\colon \Homeo(S_F)\models \phi\}
$$
is projective.
\end{proof}

\begin{thm}[Assume $\mathsf{PBP}$]
  \label{thm:proj-det-non-class}
There are non-homeomorphic orientable surfaces with elementarily equivalent homeomorphism groups.
\end{thm}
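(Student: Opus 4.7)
The plan is to combine the projectivity result of Proposition~\ref{prop:proj} with the regularity properties of projective sets under PD. Set $A_\phi = \{F \in K(2^{\N}) : \Homeo(S_F) \models \phi\}$ and define the elementary-equivalence relation $\sim$ on $K(2^{\N})$ by $F \sim F' \iff \Th(\Homeo(S_F)) = \Th(\Homeo(S_{F'}))$. By Proposition~\ref{prop:proj}, each $A_\phi$ is projective, and since any $h \in \Homeo(2^{\N})$ extends to a self-homeomorphism of the ambient Hilbert cube $Q$ carrying $\overline{S_F}$ to $\overline{S_{h[F]}}$ (via the $Z$-set extension property used earlier), each $A_\phi$ is $\Homeo(2^{\N})$-invariant. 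Hence the $\sim$-classes are projective, $\Homeo(2^{\N})$-invariant subsets of $K(2^{\N})$.

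Under PD, every projective subset of the Polish space $K(2^{\N})$ has the property of Baire. Applying the topological zero-one law for invariant Baire-measurable sets under the $\Homeo(2^{\N})$-action, each $A_\phi$ is either meager or comeager in $K(2^{\N})$. Collect the comeager ones to form the complete consistent \emph{generic theory}
\[
T = \{\phi : A_\phi \text{ is comeager in } K(2^{\N})\},
\]
and let $C = \bigcap_{\phi \in T} A_\phi$ be the associated comeager $\sim$-class. Every $F \in C$ satisfies $\Th(\Homeo(S_F)) = T$.

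To finish, I would argue that $C$ cannot be contained in a single $\Homeo(2^{\N})$-orbit. By the Ker\'ekj\'art\'o--Richards classification recalled in the discussion preceding Proposition~\ref{prop:proj}, the surfaces $\{S_F\}_{F \in K(2^{\N})}$ realize uncountably many distinct homeomorphism types, one for each $\Homeo(2^{\N})$-orbit on $K(2^{\N})$. Once one produces $F, F' \in C$ lying in distinct orbits, the surfaces $S_F$ and $S_{F'}$ are non-homeomorphic while $\Homeo(S_F) \equiv \Homeo(S_{F'})$, which is the desired conclusion.

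The principal obstacle is the last step: ruling out the pathological scenario in which $C$ coincides with a single generic orbit on $K(2^{\N})$. Indeed, the orbit of Cantor subsets of $2^{\N}$ is itself comeager in $K(2^{\N})$ (since a generic closed set is perfect and all Cantor subsets of $2^{\N}$ are related by an ambient homeomorphism), so the Baire category argument applied to $K(2^{\N})$ directly could a priori produce a theory $T$ that is only realized by Cantor subsets. The remedy is to rerun the argument inside a carefully chosen invariant Borel subspace of $K(2^{\N})$---for example the closed subsets with non-empty isolated part, or the countable closed subsets, or the subspace of closed sets prescribing a particular end behavior---where no single $\Homeo(2^{\N})$-orbit is locally comeager. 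The invariance of the $A_\phi$ and a relativized version of the zero-one law then force the new generic theory to be realized by elements in multiple distinct orbits of that subspace, producing the required pair $F \not\cong F'$ with $F \sim F'$.
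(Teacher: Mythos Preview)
Your overall architecture---regularity of projective sets under PD combined with a zero--one law to produce a large class of closed sets yielding a common theory---is exactly the paper's strategy. The difference is where you run the zero--one argument, and this is precisely where the gap lies.

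You work directly on $K(2^{\N})$ with Baire category and correctly identify the obstruction: the nowhere dense perfect subsets of $2^{\N}$ form a comeager $\Homeo(2^{\N})$--orbit, so the comeager class $C$ you produce may well be that single orbit, in which case all the $S_F$ for $F\in C$ are homeomorphic. Your proposed remedies do not close this gap. The set of $F$ with non-empty isolated part is a meager $F_\sigma$ in $K(2^{\N})$ and is not obviously a Baire space in the subspace topology; the countable closed subsets form a $\Pi^1_1$--complete, non-Borel set, so again there is no evident Polish topology on which to run a category argument; and ``prescribing a particular end behaviour'' is left unspecified. Even granting a candidate invariant Polish subspace, you would still need to verify both that the $\Homeo(2^{\N})$--action on it is generically ergodic \emph{and} that it has no comeager orbit, and these two requirements are in tension.

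The paper sidesteps the issue entirely by changing the base space. Rather than working on $K(2^{\N})$, it invokes a theorem of Camerlo giving a Borel reduction $\kappa\colon 2^{\N}\to K(2^{\N})$ from $E_0$ to the $\Homeo(2^{\N})$--orbit relation. Pulling back each $A_\phi$ along $\kappa$ yields $E_0$--invariant projective subsets of $2^{\N}$; under PD these are Lebesgue measurable, and Kolmogorov's zero--one law (applied to the tail-equivalence $E_0$ and the product measure) forces each to be null or conull. The conull intersection then automatically contains $E_0$--inequivalent points, because every $E_0$--class is countable and hence null. The reduction from $E_0$ is doing the real work your argument is missing: it replaces the problematic orbit relation on $K(2^{\N})$, which has a generic orbit, by $E_0$, which does not.
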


\begin{proof}
Let $E_0$ denote the equivalence relation of eventual agreement of infinite binary sequences, that is, for $\alpha,\beta\in 2^\mathbb N$, we set
$$
\alpha E_0\beta \; \Leftrightarrow \; \exists m\; \forall n\geqslant m\; \;\alpha_n=\beta_n.
$$
By \cite[Theorem 3]{camerlo}, there is a Borel measurable map
$$
2^\mathbb  N\overset \kappa\longrightarrow K(2^\mathbb N)
$$
so that 
$$
\alpha E_0\beta \; \Leftrightarrow \exists h\in \Homeo(2^\mathbb N)\;\; h[\kappa(\alpha)]=\kappa(\beta).
$$
By Proposition \ref{prop:proj}, for every first-order sentence of the language of group theory $\phi$, the set 
$$
\Mgd{F\in K(2^\mathbb  N)}{\Homeo(S_F)\models \phi}
$$
is projective. As the inverse image of a projective set by a Borel function is also projective, it follows that
$$
A_\phi=\Mgd{\alpha\in 2^\mathbb N}{\Homeo(S_{\kappa(\alpha)})\models \phi}
$$
is projective too. Moreover, for all $\alpha,\beta\in 2^\mathbb N$, 
\[\begin{split}
\alpha E_0\beta
&\Rightarrow
\exists h\in \Homeo(2^\mathbb N)\;\; h[\kappa(\alpha)]=\kappa(\beta)\\
&\Rightarrow
S_{\kappa(\alpha)}\cong S_{\kappa(\beta)}\\
&\Rightarrow
\Homeo(S_{\kappa(\alpha)})\cong \Homeo(S_{\kappa(\beta)})\\
&\Rightarrow
(\alpha\in A_\phi \leftrightarrow \beta\in A_\phi).
\end{split}\]
In other words, $A_\phi$ is an $E_0$-invariant projective set. 

Now, under $\mathsf{PBP}$, the projective set $A_\phi$ has the Baire property. Since $A_\phi$ is $E_0$-invariant, the category zero-one law for $E_0$ implies that $A_\phi$ is either meager or comeager. If $A_\phi$ is comeager, let $C_\phi=A_\phi$ and otherwise let $C_\phi=2^\mathbb N\setminus A_\phi$. It thus follows that
$$
C=\bigcap_{\phi}C_\phi,
$$
where the intersection runs over the countable collection of all first-order sentences $\phi$ of the language of group theory, is comeager in $2^\mathbb N$. Furthermore, for all $\alpha,\beta\in C$, we have
$$
\Homeo(S_{\kappa(\alpha)})\equiv \Homeo(S_{\kappa(\beta)}).
$$
Since every $E_0$-class is countable and $C$ is comeager, it suffices to pick $\alpha,\beta\in C$ that are $E_0$-inequivalent and hence so that $S_{\kappa(\alpha)}\not\cong S_{\kappa(\beta)}$, but nevertheless $\Homeo(S_{\kappa(\alpha)})\equiv \Homeo(S_{\kappa(\beta)})$.
\end{proof}


\subsection{Nonhomeomorphic manifolds with elementarily equivalent homeomorphism groups}

We now wish to show that the failure of first-order rigidity for
homeomorphism groups of noncompact surfaces implies failure of first-order
rigidity for noncompact manifolds in all higher dimensions. For an arbitrary
connected surface $S$, we will write $P_k(S)$ for $S\times S^k$, the
Cartesian product of $S$ with the $k$--sphere $S^k$. we will suppress the
notation $S$ and $k$ when they are clear from context or irrelevant.

The goal of this subsection is to show the following.
\begin{thm}
  For every fixed $k\geqslant 1$ the manifold $P_{k}(S)$ and its homeomorphism group $\Homeo(P_{k}(S))$ are uniformly interpretable in $\Homeo(S)$.
\end{thm}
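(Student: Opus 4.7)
The plan is to assemble this statement almost entirely from infrastructure that has already been built earlier in the paper, so the argument amounts to composing uniform interpretations rather than constructing anything new. Since $S$ is a connected surface (dimension $d=2$), Theorem~\ref{thm:kkdln} together with Corollary~\ref{cor:noncompact-interp} provides uniformly $\mathcal L$--interpretable sorts in $\Homeo(S)$ for the points of $S$, the regular open sets $\ro(S)$, the regular compact sets $\rc(S)$, the natural numbers, and full second-order arithmetic $(\N,2^{\N},+,\times,<,\in)$, together with the standard predicates (membership, action, Boolean structure, etc.). Corollary~\ref{cor:noncompact-sequence} then yields a uniform sort $\mathrm{seq}^{\mathcal G}(S)$ of countable sequences of points in $S$, giving us the expanded structure $[S]^{+}$.

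With that in hand, the manifold $P_k(S)=S\times S^k$ is interpreted as follows. Fix once and for all an analytic presentation of $S^k$ as a Polish space; because the interpretation of second-order arithmetic is parameter-free, this presentation and the topology on $S^k$ are available inside $[S]^+$ with no dependence on $S$. Points of $P_k(S)$ are then interpreted as pairs $(p,q)$ where $p$ is an interpreted point of $S$ and $q$ is a point of $S^k$ (the latter being a parameter-free definable sort in second-order arithmetic as discussed in Section~\ref{ss:effective}). A basis of open sets for $P_k(S)$ is obtained as products of elements of $\ro(S)$ with basic open subsets $\mathcal U_i\subseteq S^k$; since both factors are uniformly definable, the incidence relation ``$(p,q)\in U\times V$'' is uniformly definable, which yields all the topological data of $P_k(S)$ and, in particular, its regular open and regular compact sets.

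To interpret $\Homeo(P_k(S))$ itself, I would apply Corollary~\ref{cor:noncompact-inner} with $X=S^k$: a homeomorphism of $P_k(S)$ is determined by its restriction to a dense subset, and the graph of such a homeomorphism is encoded by a suitable sequence in $\mathrm{seq}^{\mathcal G}(S\times S^k)$, subject to definable conditions (extending continuously to a bijection of $S\times S^k$ with continuous inverse). These conditions, together with the definable predicate for the composition/group operation, are precisely what Corollary~\ref{cor:noncompact-inner} asserts to be uniformly definable. Combining all of this is exactly Proposition~\ref{prop:homeo-interp}, which says $[S]^{+}$ interprets $\Homeo(S\times S^k)$ uniformly in $S$. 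Composing the interpretation of $[S]^{+}$ in $\Homeo(S)$ with the interpretation of $\Homeo(P_k(S))$ in $[S]^{+}$ gives the uniform interpretation of both $P_k(S)$ and $\Homeo(P_k(S))$ in $\Homeo(S)$.

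The main conceptual obstacle is not technical novelty but bookkeeping: one must check that the interpretation of $P_k(S)$ as ``points of $S$ times points of $S^k$'' really does capture the intrinsic manifold structure of $P_k(S)$, in the sense that the induced interpreted sort of regular open (resp.\ compact) sets of $P_k(S)$ matches the definable predicates that one would get by applying Theorem~\ref{thm:kkdln} directly to $\Homeo(P_k(S))$, so that the interpreted group action agrees with the actual action on $P_k(S)$. This is routine given the formalism of Section~\ref{ss:effective} and the already established sequence-sort machinery, but it is the only place where a careful verification is needed.
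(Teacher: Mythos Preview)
Your plan is correct and matches the paper's approach: the theorem is not proved from scratch but is assembled from the earlier infrastructure, namely Theorem~\ref{thm:kkdln}, Corollaries~\ref{cor:noncompact-interp}, \ref{cor:noncompact-sequence}, \ref{cor:noncompact-inner}, and especially Proposition~\ref{prop:homeo-interp}, which already asserts that $[S]^+$ uniformly interprets $\Homeo(S\times S^k)$. The paper in fact does not supply a separate proof of this theorem at all; it states it as the goal of the subsection and relies implicitly on Proposition~\ref{prop:homeo-interp}, devoting the remainder of the subsection to the distinct task (Lemma~\ref{lem:product}) of showing that taking the product with $S^k$ preserves non-homeomorphism for the surfaces $S_F$.
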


Observe that there is a natural map $\pi\colon P_k(S)\longrightarrow S$ by
projecting onto the first factor. Moreover, if $K\subseteq S$ is a compact
subspace then $\pi^{-1}(K)$ is a compact subspace of $P_k(S)$. It is
straightforward then that the  end-space $\mathcal E(S)$ is homeomorphic to the end-space $\mathcal E(P_k(S))$.

Let \[K_0\subseteq K_1\subseteq\cdots\] be a
cofinal sequence of increasing compact subspaces
of $S$ and let $U_i=S\setminus K_i$. We let $(\hat U_i)$ denote a
coherent choice of connected component of each $U_i$, so that
$\hat U_i$ is a connected component of $U_i$ for each $i$ and 
$\hat U_{i+1}\subseteq \hat U_i$ for each $i$. Thus, $(\hat U_i)$
defines an end of the surface $S$. It follows then that
$(\hat V_i)=(\hat U_i\times S^k)$, given by taking the Cartesian
product of each $\hat U_i$ with $S^k$, defines an end of $P_k(S)$.
We will say that $(\hat V_i)$ is \emph{accumulated by genus}
if $(\hat U_i)$ is. The set of ends accumulated by genus
is written $\mathcal E^g(P_k(S))$.

\begin{lem}\label{lem:product}
    The inclusion $\mathcal E^g(P_k(S))\subseteq \mathcal E(P_k(S))$ is
    $\Homeo(P_k(S))$--invariant.
\end{lem}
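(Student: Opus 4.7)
The plan is to characterize $\mathcal{E}^g(P_k(S))$ intrinsically in terms of the cup-product structure on compactly supported cohomology of end-neighborhoods of $P_k(S)$, and then to observe that such a characterization is manifestly preserved by any self-homeomorphism.

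First, the projection $\pi\colon P_k(S)\to S$ is proper (its fibers $S^k$ are compact), so it induces a canonical homeomorphism $\mathcal{E}(P_k(S))\cong\mathcal{E}(S)$. By pulling back a compact exhaustion of $S$ along $\pi$, every end $e$ of $P_k(S)$ admits a cofinal system of product neighborhoods of the form $\hat U_i\times S^k$, where $(\hat U_i)$ is cofinal for the corresponding end $e'\in\mathcal{E}(S)$; in particular, every neighborhood $V$ of $e$ contains a product sub-neighborhood $V_0=\hat U\times S^k$.

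Next I would introduce the detecting invariant. For a connected oriented open neighborhood $V$ of $e$, consider the cup-product pairing
\[
\mu_V\colon H^1_c(V;\mathbb{Q})\otimes H^1_c(V;\mathbb{Q})\longrightarrow H^2_c(V;\mathbb{Q}).
\]
For a product neighborhood $V=\hat U\times S^k$, the K\"unneth formula together with $H^*_c(S^k)=H^*(S^k)$ identifies $H^1_c(V;\mathbb{Q})$ with $H_1(\hat U;\mathbb{Q})$ and identifies $\mu_V$ with the classical intersection form on the surface $\hat U$ (via Poincar\'e--Lefschetz duality on $\hat U$). The latter is non-zero precisely when $\hat U$ carries a non-separating simple closed curve, equivalently when $\hat U$ has positive genus. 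For a general neighborhood $V\supseteq V_0=\hat U\times S^k$, the extension-by-zero map $H^*_c(V_0)\to H^*_c(V)$ is natural for cup products and is an isomorphism on the $H_0$-summand of $H^2_c$ (both $V_0$ and $V$ being connected), hence sends nonzero $\mu_{V_0}$ to nonzero $\mu_V$. From this, I would deduce the intrinsic characterization
\[
e\in\mathcal{E}^g(P_k(S))\;\Longleftrightarrow\; \mu_V\neq 0\text{ for every neighborhood }V\text{ of }e.
\]
The forward direction uses that every $\hat U$ has positive genus in the genus-accumulated case, producing nonzero classes in every product sub-neighborhood; the reverse takes $V=\hat U\times S^k$ with $\hat U$ planar, forcing $\mu_V=0$.

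Since the property ``$\mu_V\neq 0$ for every neighborhood $V$'' is intrinsic to $P_k(S)$ and is preserved under any $\phi\in\Homeo(P_k(S))$ (cup products and compactly supported cohomology transform naturally under homeomorphisms), $\mathcal{E}^g(P_k(S))$ is $\Homeo(P_k(S))$-invariant. The main technical subtleties are (i) verifying the K\"unneth-type identification of $\mu_V$ with the classical intersection form on $\hat U$, and (ii) handling non-orientable surfaces $S$, which requires $\mathbb{Z}/2$-coefficients throughout but proceeds along the same lines.
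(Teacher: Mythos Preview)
Your strategy is sound and genuinely different from the paper's. The paper characterizes genus-accumulated ends by the existence, in every end-neighborhood $\hat V_i$, of an embedded $W_i\cong (S^1\times S^k)\times[-1,1]$ whose core $S^1\times S^k$ locally separates but does not globally separate $P_k(S)$; it then uses Poincar\'e--Lefschetz duality on a compact piece $\Sigma\times S^k$ with $\Z/2$ coefficients to show that such a nonseparating hypersurface forces $\Sigma$ to have positive genus. Your approach bypasses the embedded-submanifold criterion entirely and detects genus directly via the cup product $H^1_c\otimes H^1_c\to H^2_c$, which is cleaner and more transparently invariant. Both arguments ultimately rest on the same phenomenon---the intersection form on a planar surface vanishes---but your packaging avoids having to find and analyze specific codimension-one submanifolds.

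There is, however, one step that is not correctly justified. You write that the extension-by-zero map $H^*_c(V_0)\to H^*_c(V)$ ``is an isomorphism on the $H_0$-summand of $H^2_c$ (both $V_0$ and $V$ being connected)''. For a general open neighborhood $V\subseteq P_k(S)$, which is a $(k{+}2)$-manifold, $H^2_c(V)$ has no canonical ``$H_0$-summand''; Poincar\'e duality identifies $H^2_c(V)$ with $H_k(V)$, not $H_0(V)$, and there is no a priori reason the pushforward $j_!\colon H^2_c(V_0)\to H^2_c(V)$ should be injective on the piece carrying $\alpha\cup\beta$. The fix is easy: compose with the further pushforward to $H^2_c(P_k(S))=H^2_c(S\times S^k)$. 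By K\"unneth and the compatibility of $j_!$ with products, the class $\alpha\cup\beta\in H^2_c(\hat U)\otimes H^0(S^k)$ maps to $H^2_c(S)\otimes H^0(S^k)$ via the map induced by $\hat U\hookrightarrow S$, and \emph{that} map is an isomorphism (dually, $H_0(\hat U)\to H_0(S)$ with both connected). Since the composite $H^2_c(V_0)\to H^2_c(V)\to H^2_c(P_k(S))$ does not kill $\alpha\cup\beta$, neither does the first map, and your conclusion $\mu_V\neq 0$ follows. With this correction your argument goes through.
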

\begin{proof}
    It suffices to distinguish topologically between the ends of
    $P_k(S)$ which are accumulated by genus and those which are not, since
    a topological distinction will be preserved by all homeomorphisms.
    We observe that an end $(\hat V_i)$ is accumulated by
    genus if and only if there exists a disjoint collection
    of compact submanifolds $\{W_i\}_{i\in\N}$ of $P_k(S)$
    with $W_i\subseteq\hat V_i$ for all $i$ and
    such that:
    \begin{enumerate}
        \item For all $i$, the manifold $W_i$ admits a locally
        separating submanifold
        \[M_i=S^1\times S^k\times\{0\}\subseteq W_i\] such that
        $W_i\cong M_i\times [-1,1]$;
        \item For all $i$, the complement $N_i=P_k(S)\setminus W_i$
        is connected.
    \end{enumerate}

    We claim that such a sequence $\{W_i\}_{i\in\N}$ exists if and only
    if for all $i$, the component $\hat U_i\subseteq S$ contains a
    simple closed curve which is nonseparating in $S$; this latter
    property is possible if and only if the end 
    $(\hat V_i)$ is accumulated by genus.

    Note that if $(\hat V_i)$ is accumulated by genus
    then such a collection $\{W_i\}_{i\in\N}$ clearly exists.
    Indeed, in each $\hat U_i$ there is a nonseparating essential
    closed curve
    $\gamma_i$, and we may simply take $W_i$ to be the closure
    of a small tubular
    neighborhood of $\gamma_i$ crossed with $S^k$. Conversely,
    if $(\hat V_i)$ is not accumulated by genus then
    no such construction is possible.

    In general, we may assume $M_i$ locally separates $P_k(S)$ but
    does not globally separate $P_k(S)$. Choose a loop $\delta$ in
    $P_k(S)$ which intersects $M_i$ exactly once. There is a compact
    essential
    subsurface $\Sigma\subseteq S$ such that $\delta\cup M_i\subseteq
    \Sigma\times S^k$, and we claim that $\Sigma$ must have positive
    genus, and suppose the contrary for a contradiction.
    
    We view $M_i$ as an element of
    $H_{k+1}(\Sigma\times S^k,\bZ/2\bZ)$. Writing $\partial$ for the
    boundary of $\Sigma\times S^k$,
    Poincar\'e--Lefschetz duality implies that intersection number with
    elements of $H_{k+1}(\Sigma\times S^k,\Z/2\Z)$ furnishes an isomorphism with
    $H^1(\Sigma\times S^k,\partial,\Z/2\Z)$. If $\Sigma$ has genus $0$
    then the long exact sequence on relative cohomology shows that
    the inclusion of the boundary $\partial$ of $\Sigma\times S^k$
    induces a surjection \[H^0(\partial,\Z/2\Z)\longrightarrow
    H^1(\Sigma\times S^k,\partial,\Z/2\Z).\] On the other hand,
    $M_i$ furnishes a nontrivial
    $\Z/2\Z$--valued first cohomology class of $\Sigma\times S^k$
    relative to the boundary, which is not in the image of
    $H^0(\partial,\Z/2\Z)$. This is a contradiction.
\end{proof}

\begin{cor}
    Assume that $\mathsf{PBP}$ holds and let $d\geqslant 2$. There exist pairs of non-homeo\-morphic
    $d$-manifolds whose homeomorphism groups are elementarily equivalent.
\end{cor}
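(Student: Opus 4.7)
The plan is to reduce the corollary to Theorem~\ref{thm:proj-det-non-class} via Theorem~\ref{thm:pd-dim-implication}. The case $d=2$ is exactly Theorem~\ref{thm:proj-det-non-class}. For $d\geqslant 3$, set $k=d-2\geqslant 1$, and let $S,S'$ be the two non-homeomorphic surfaces produced by Theorem~\ref{thm:proj-det-non-class}, satisfying $\Homeo(S)\equiv\Homeo(S')$. By Theorem~\ref{thm:pd-dim-implication} applied to this pair with the $k$-sphere, we immediately get $\Homeo(S\times S^k)\equiv\Homeo(S'\times S^k)$, so the entire work lies in verifying that $S\times S^k\not\cong S'\times S^k$.

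I would prove non-homeomorphism by extracting enough homeomorphism invariants of the product to already distinguish $S$ from $S'$. Since $S^k$ is compact for $k\geqslant 1$, the projection $\pi\colon S\times S^k\to S$ is proper and so descends to a canonical homeomorphism of end spaces $\mathcal E(S\times S^k)\cong \mathcal E(S)$, and analogously for $S'$. The topological characterization of ``accumulated by genus'' given in the proof of Lemma~\ref{lem:product} refers only to the ambient structure of a product of a surface with $S^k$, so exactly the same characterization identifies $\mathcal E^g(P_k(S))\subseteq \mathcal E(P_k(S))$ inside any such product. Consequently, any hypothetical homeomorphism $h\colon S\times S^k\to S'\times S^k$ must send $\mathcal E^g(S\times S^k)$ bijectively onto $\mathcal E^g(S'\times S^k)$. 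Composing with the canonical identifications above yields a homeomorphism of pairs
\[
\big(\mathcal E(S),\mathcal E^g(S)\big)\;\cong\; \big(\mathcal E(S'),\mathcal E^g(S')\big).
\]

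The final step invokes the specific construction from Section~\ref{ss:surfac-ee}: the surfaces produced by Theorem~\ref{thm:proj-det-non-class} are of the form $S=S_{\kappa(\alpha)}$ and $S'=S_{\kappa(\beta)}$ with $\alpha\not E_{0}\beta$, whose common space of ends is identified with $2^{\N}$ and whose subsets of ends accumulated by genus correspond to $\kappa(\alpha)$, respectively $\kappa(\beta)$. The displayed homeomorphism of pairs would then produce a homeomorphism of $2^{\N}$ sending $\kappa(\alpha)$ onto $\kappa(\beta)$, which by the defining property of $\kappa$ (from \cite{camerlo}, as used in Theorem~\ref{thm:proj-det-non-class}) forces $\alpha E_{0}\beta$, a contradiction. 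The main obstacle is the preservation of the ``accumulated by genus'' invariant after crossing with $S^k$; once Lemma~\ref{lem:product} delivers this, the remainder is the same Kerékjártó--Richards classification input used in the proof of Theorem~\ref{thm:proj-det-non-class}.
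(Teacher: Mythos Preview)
Your proposal is correct and follows essentially the same approach as the paper: reduce to the surface case via Theorem~\ref{thm:pd-dim-implication}, and then use Lemma~\ref{lem:product} together with the specific construction of the surfaces $S_{\kappa(\alpha)}$ to ensure that the products $S\times S^k$ remain non-homeomorphic. The paper's own proof is extremely terse (it simply invokes Theorem~\ref{thm:pd-dim-implication}, Lemma~\ref{lem:product}, and the construction in Section~\ref{ss:surfac-ee}), whereas you spell out explicitly how the intrinsic characterization from the proof of Lemma~\ref{lem:product} transports the ``accumulated by genus'' invariant across a hypothetical homeomorphism $S\times S^k\to S'\times S^k$ and how this yields the required homeomorphism of pairs $(2^{\N},\kappa(\alpha))\cong(2^{\N},\kappa(\beta))$; this is exactly the argument the paper leaves implicit.
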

\begin{proof}
    By Theorem~\ref{thm:pd-dim-implication}
    and Lemma~\ref{lem:product},
    it suffices to find two surfaces which are not homeomorphic but
    have elementarily equivalent homeomorphism groups, and which
    remain non-homeomorphic after taking a Cartesian product
    with spheres of positive dimension. This
    follows immediately from the construction of the surfaces
    $S_F$ in 
    Section
    ~\ref{ss:surfac-ee} above and Lemma~\ref{lem:product}.
\end{proof}


\subsection{Failure of type rigidity under the Baire-property hypothesis}
In the present section, we let $M$ be a fixed manifold of dimension $d\geqslant 1$. For this section, we will not assume that
$M$ has no boundary nor that it be connected.

Let $D_\infty$ denote the infinite dihedral group viewed as the group of order-preserving and order-reversing automorphisms of the linear order $(\Z,<)$. We define an action $D_\infty\curvearrowright \{-1,1\}^\Z$ by letting
$$
(\sigma\cdot \alpha)_{\sigma(i)}=\begin{cases}
    \alpha_i&\text{if $\sigma$ is order-preserving},\\
-\alpha_i&\text{if $\sigma$ is order-reversing},
\end{cases}
$$
for all $\sigma\in D_\infty$, $\alpha\in \{-1,1\}^\Z$ and $i\in \Z$. Observe also that by restricting to the subgroup $\Z\leqslant D_\infty$ of translations, we obtain the usual shift-action $\Z\curvearrowright \{-1,1\}^\Z$.
We will use the corresponding category zero-one law for the shift: every shift-invariant subset of $\{-1,1\}^\Z$ with the Baire property is either meager or comeager.

\begin{prop}\label{prop:redconj}
There is a continuous map $\{-1,1\}^\mathbb Z\overset f\longrightarrow \Homeo_0([0,1])$ such that
\[\begin{split}
    \exists \sigma \in \Z \;\big(\sigma\!\cdot\! \alpha=\beta\big)
    \quad\Leftrightarrow\quad
    \exists h\in \Homeo_0([0,1])\;\big( hf_\alpha h^{-1}=f_\beta\big)
\end{split}\]   
and
\[\begin{split}
    \exists \sigma \in D_\infty \;\big(\sigma\!\cdot\! \alpha=\beta\big)
    \quad\Leftrightarrow\quad
    \exists h\in \Homeo([0,1])\;\big( hf_\alpha h^{-1}=f_\beta\big).
\end{split}\]   
\end{prop}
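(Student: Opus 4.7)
The plan is to encode $\alpha$ in the flow direction of $f_\alpha$ between a bilateral sequence of fixed points. First I would fix a strictly increasing sequence $(p_n)_{n \in \Z} \subset (0,1)$ satisfying the symmetry $p_{-n-1} = 1-p_n$ and with $p_n \to 0$ as $n \to -\infty$, $p_n \to 1$ as $n \to +\infty$. For each $\alpha \in \{-1,1\}^\Z$ I would define $f_\alpha \in \Homeo_0([0,1])$ to fix $\{0,1\}\cup\{p_n\}_{n\in\Z}$ and to have a fixed piecewise-linear profile on $[p_n,p_{n+1}]$ depending only on $\alpha_n$, arranged so that $\sgn(f_\alpha(x)-x) = \alpha_n$ on $(p_n,p_{n+1})$. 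Continuity of $\alpha \mapsto f_\alpha$ in the uniform metric is immediate because each $\alpha_n$ affects $f_\alpha$ only on the $n$-th interval and those intervals shrink at both ends. Through a chosen orientation-preserving embedding $\Homeo_0([0,1]) \hookrightarrow \Homeo_0(B^d)$, one views $f_\alpha$ as an element of $\Homeo(B^d)$.

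For the forward directions, if $\beta = T^k\alpha$ I would pick $h_k \in \Homeo_0([0,1])$ with $h_k(p_n) = p_{n+k}$ and matching interval profiles; a direct computation on each interval gives $h_k f_\alpha h_k^{-1} = f_\beta$, and $h_k$ extends to an element of $\Homeo_0(B^d)$. For the order-reversing involution $\sigma\colon i \mapsto -i-1 \in D_\infty$, the reflection $r(x)=1-x$ sends $p_n$ to $p_{-n-1}$, and an analogous calculation yields $rf_\alpha r^{-1} = f_{\sigma\cdot\alpha}$; this $r$ extends to $B^d$ as an orientation-reversing hyperplane reflection. Composing with the shifts $h_k$ then realizes every element of $D_\infty$ via conjugation in $\Homeo(B^d)$.

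For the converse, I would analyze the fixed set of $f_\alpha$ inside $B^d$: a linearly ordered bilateral sequence of codimension-one fixed subsets accumulating on two distinguished ``end'' components. Any conjugating homeomorphism must permute these components compatibly with the separation structure of $B^d$, so the induced bijection of $\Z$ must be either an order-preserving shift or an order-reversing reflect-and-shift—i.e.\ an element of $D_\infty$—and the flow-direction datum $\alpha_n$ transforms under this permutation exactly as in the $D_\infty$-action on $\{-1,1\}^\Z$, forcing $\beta$ into the $D_\infty$-orbit of $\alpha$. The hard part will be to show that \emph{orientation-preserving} conjugating homeomorphisms yield only shifts, so that $\Homeo_0(B^d)$-conjugacy captures exactly the $\Z$-orbit. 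For $d=1$ this is immediate from order-preservation, but for $d \geq 2$ the naive product extension $F_\alpha(x,y)=(f_\alpha(x),y)$ admits orientation-preserving symmetries such as the $180^\circ$ rotation $(x,y)\mapsto(1-x,-y)$ in $B^2$ that swap the two ends of $[0,1]$. To handle this, the embedding $\Homeo_0([0,1]) \hookrightarrow \Homeo_0(B^d)$ must be refined to carry an orientation-sensitive transverse feature—for instance, a chiral twist in the $B^{d-1}$ factor—preserved under orientation-preserving conjugation and reversed under orientation-reversing conjugation; checking that the rigidity argument above survives this refinement is the central technical step of the proof.
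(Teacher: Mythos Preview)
Your core construction—the bilateral sequence of fixed points with sign-encoding on each subinterval, the continuity argument, the forward conjugations by shift and reflection, and the converse via the fixed-point set—is correct and is exactly what the paper does. However, you have misread the statement: the codomain of $f$ is $\Homeo_0([0,1])$, and the $B^d$ appearing in the conjugacy conditions is simply $[0,1]$ (the paper's proof works entirely inside the interval and finishes by noting that $h\in\Homeo_0([0,1])$ iff $h$ is order-preserving). So the case you call ``$d=1$, immediate from order-preservation'' \emph{is} the whole proposition, and your proof of that case is complete and matches the paper.

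Everything you write about embedding into $\Homeo_0(B^d)$ for $d\geqslant 2$ and adding a chiral transverse twist is therefore unnecessary here—and in fact aims at a statement the paper neither claims nor wants. The higher-dimensional case is handled in the very next proposition (the suspension construction), whose conclusion is deliberately \emph{different}: for $d\geqslant 2$ both $\Homeo_0(B^d)$- and $\Homeo(B^d)$-conjugacy correspond to the full $D_\infty$-orbit, precisely because of the rotational symmetry you identified. The paper exploits this by exhibiting an orientation-reversing involution commuting with every $g_\alpha$, so the $\Z$-versus-$D_\infty$ distinction is intentionally collapsed. Your chiral-twist program, if it could be carried through, would yield a stronger result than the paper needs; but as written it is only a sketch, and showing that such a twist is preserved under arbitrary $\Homeo_0(B^d)$-conjugation is a genuinely nontrivial claim you have not established.
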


\begin{proof}
Let $i\in \Z\mapsto p_i\in (0,1)$ be an order-embedding of $\Z$ into the open interval $(0,1)$ so that $\lim_{i\to \infty}p_{-i}=0$ and $\lim_{i\to \infty}p_i=1$. Fix also a homeomorphism $\zeta_i$ of $J_i=(p_i,p_{i+1})$ such that $x<\zeta_i(x)$  for all $x\in J_i$ and therefore also $\zeta_i^{-1}(x)<x$ for all $x\in J_i$.

For each $\alpha\in \{-1,1\}^{\Z}$, we construct 
$$
f_{\alpha}\in \Homeo_0([0,1])
$$ 
by letting $f_\alpha(0)=0$, $f_\alpha(1)=1$ and 
$$
f_{\alpha}\!\upharpoonright_{J_i}=\zeta_i^{\alpha_i}\qquad\&\qquad f_\alpha(p_i)=p_i
$$
for all $i\in \Z$. The map $\alpha\mapsto f_{\alpha}$ is evidently continuous and $\{0,1\}\cup\mgd{p_i}{i\in\Z}$ is exactly the set of fixed points of $f_\alpha$.

Suppose $f_\beta h=hf_\alpha$ for some $h\in \Homeo([0,1])$ and $\alpha,\beta\in \{-1,1\}^\Z$. Then $h$ maps the fixed points of $f_\alpha$ to those of $f_\beta$ and so there is some $\sigma\in D_\infty$ such that $h[J_i]=J_{\sigma(i)}$ for all $i\in \Z$. In particular, $\sigma$ is order-preserving if and only if $h$ is orientation-preserving.
Note that, if $h$ is orientation-preserving, then for all $i\in \Z$
\[\begin{split}
\alpha_i=1 
\;\Leftrightarrow\;& \forall x \in J_i\quad x<f_\alpha(x)\\
\;\Leftrightarrow\;& \forall x \in J_i\quad h(x)<hf_\alpha(x)=f_\beta h(x)\\
\;\Leftrightarrow\;& \forall y \in J_{\sigma(i)}\quad y<f_\beta (y)\\
\;\Leftrightarrow\;& \beta_{\sigma(i)}=1,
\end{split}\]
whereas, if $h$ is orientation-reversing,
\[\begin{split}
\alpha_i=1 
\;\Leftrightarrow\;& \forall x \in J_i\; x<f_\alpha(x)\\
\;\Leftrightarrow\;& \forall x \in J_i\; f_\beta h(x)=hf_\alpha(x)<h(x)\\
\;\Leftrightarrow\;& \forall y \in J_{\sigma(i)}\; f_\beta (y)<y\\
\;\Leftrightarrow\;& \beta_{\sigma(i)}=-1.
\end{split}\]
Thus, in either case, we find that $\beta=\sigma\cdot\alpha$.

For the converse, note that any two $\zeta_i$ and $\zeta_j$ are conjugate by an orientation-preserving homeomorphism $J_i\longrightarrow J_j$, whereas $\zeta_i$ and $\zeta_j^{-1}$ are conjugate by an orientation-reversing homeomorphism $J_i\longrightarrow J_j$. Using this, one easily sees that, when $\beta=\sigma\cdot\alpha$ for some $\sigma\in D_\infty$, then also $f_\beta h=hf_\alpha$ for some $h\in \Homeo([0,1])$.
    
This shows that
\[\begin{split}
    \exists \sigma \in D_\infty \;\big(\sigma\!\cdot\! \alpha=\beta\big)
    \quad\Leftrightarrow\quad
    \exists h\in \Homeo([0,1])\;\big( hf_\alpha h^{-1}=f_\beta\big).
\end{split}\]   
To get the first equivalence, we simply note that $\sigma$ is order-preserving if and only if $\sigma\in \Z$, whereas $h$ is order-preserving if and only if $h\in \Homeo_0([0,1])$.
\end{proof}

\begin{prop}\label{prop:redconj2}
For every $d\geqslant 2$ there is a continuous map $\{-1,1\}^\mathbb Z\overset g\longrightarrow \Homeo_\partial\big(B^d(1)\big)$ such that
\[\begin{split}
    \exists \sigma \in D_\infty \;\big(\sigma\!\cdot\! \alpha=\beta\big)
    &\quad\Leftrightarrow\quad
    \exists h\in \Homeo\big(B^d(1)\big)\;\big( hg_\alpha h^{-1}=g_\beta\big)\\
    &\quad\Leftrightarrow\quad
    \exists h\in \Homeo_0\big(B^d(1)\big)\;\big( hg_\alpha h^{-1}=g_\beta\big).
\end{split}\]   
\end{prop}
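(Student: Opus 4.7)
The plan is to lift the one-dimensional construction of Proposition~\ref{prop:redconj} to $B^d(1)$ by suspending the dynamics of $f_\alpha$ along a properly embedded arc, exploiting the extra dimensions to accommodate order-reversal in an orientation-preserving way. Fix two antipodal boundary points $P_\pm\in\partial B^d(1)$ and let $I$ be the diameter from $P_-$ to $P_+$. After arranging, without loss of generality, that each $\zeta_i$ in Proposition~\ref{prop:redconj} is the time-$1$ map of a vector field on $J_i$ vanishing at the endpoints, $f_\alpha$ becomes the time-$1$ map of a vector field $X_\alpha$ on $[0,1]$ vanishing on $\{0,1\}\cup\{p_i\}$. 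Let $\nu(I)$ be a closed tubular neighborhood of $I$ in $B^d(1)$, so that $\nu(I)\cong D^{d-1}\times[0,1]$ with $D^{d-1}\times\{0,1\}\subseteq\partial B^d(1)$. Pick a bump function $\eta\colon D^{d-1}\to[0,1]$ with $\eta(0)=1$ and $\eta|_{\partial D^{d-1}}=0$, and define $g_\alpha$ on $\nu(I)$ as the time-$1$ flow of $\eta(y)X_\alpha(t)\partial_t$; extend by the identity outside $\nu(I)$. Then $g_\alpha\in\Homeo_\partial(B^d(1))$, its restriction to $I$ equals $f_\alpha$, and continuity of $\alpha\mapsto g_\alpha$ is immediate.

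For the forward direction, if $\sigma\in D_\infty$ is order-preserving, the $\Homeo_0([0,1])$-conjugator from Proposition~\ref{prop:redconj} extends through the tube by the same suspension construction, giving an orientation-preserving $h$. If $\sigma$ is order-reversing, additionally compose with the $180^\circ$ rotation $R$ of $B^d(1)$ about a $(d-2)$-dimensional affine subspace through the midpoint of $\overline{P_+P_-}$ perpendicular to this segment. The key dimensional fact is that $R$ is \emph{orientation-preserving} on $B^d(1)$ for every $d\geqslant 2$, since it is a product of two reflections and has determinant $+1$; furthermore, $R$ swaps $P_+$ and $P_-$, reverses the parametrization of $I$, and negates $X_\alpha$ along $I$, matching the sign-flip in the $D_\infty$ action. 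This yields the required $h\in\Homeo_0(B^d(1))$.

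For the backward direction, suppose $h\in\Homeo(B^d(1))$ conjugates $g_\alpha$ to $g_\beta$. The closure of the non-fixed-point set of $g_\alpha$ lies in $\overline{\nu(I)}\setminus\partial B^d(1)$, and its accumulation set on $\partial B^d(1)$ is exactly $\{P_-,P_+\}$; this set is a topological invariant, so $h$ preserves $\{P_-,P_+\}$ setwise. The invariant hypersurfaces $\{p_i\}\times D^{d-1}\subseteq\nu(I)$ together with the direction of push between consecutive hypersurfaces encode topological data reading off $\alpha_i$, up to a possible global sign-flip governed by whether $h$ fixes or swaps $\{P_-,P_+\}$. Invoking Proposition~\ref{prop:redconj} applied to the induced one-dimensional conjugation yields $\sigma\in D_\infty$ with $\sigma\cdot\alpha=\beta$.

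The main obstacle is rigorously extracting the one-dimensional dynamics from $h$ in the backward direction: one must show that $h$ necessarily maps the invariant hypersurface structure of $g_\alpha$ inside $\nu(I)$ to that of $g_\beta$, that the order of these hypersurfaces is preserved or reversed precisely according to whether $h$ fixes or swaps $P_\pm$, and that the local push direction is a topological invariant preserved up to this global reversal. After this bookkeeping, the analysis reduces to the one-dimensional case already handled in Proposition~\ref{prop:redconj}, completing the equivalences.
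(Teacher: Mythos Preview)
Your strategy matches the paper's in spirit---extend the interval construction to $B^d(1)$ and exploit an extra dimension to absorb orientation reversal---but the forward direction has a genuine gap. If $h_0\in\Homeo_0([0,1])$ conjugates $f_\alpha$ to $f_\beta$ and you ``extend through the tube by the same suspension construction,'' the resulting map conjugates $g_\alpha$ to $g_\beta$ only if $h_0$ conjugates the \emph{entire flows} $\phi_{X_\alpha}^\tau$ to $\phi_{X_\beta}^\tau$ for every $\tau=\eta(y)\in[0,1]$, i.e.\ only if $(h_0)_*X_\alpha=X_\beta$ as vector fields. Conjugating the time-$1$ maps, which is all Proposition~\ref{prop:redconj} provides, does not force this; and if instead you extend $h_0$ as $(y,t)\mapsto(y,h_0(t))$, that map is not the identity on $\partial D^{d-1}\times[0,1]$ and so does not glue to the identity outside the tube as you claim. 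The paper sidesteps this by using the iterated suspension $\Sigma^{d-1}[0,1]\cong B^d(1)$: the assignment $\Homeo([0,1])\to\Homeo(\Sigma^{d-1}[0,1])$ is a group homomorphism, so conjugacy relations transfer automatically, and a commuting orientation-reversing suspension flip plays the role of your rotation $R$.

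Your backward direction also needs repair. The closure of the non-fixed set of $g_\alpha$ meets $\partial B^d(1)$ in the two end-disks $D^{d-1}\times\{0,1\}$, not merely in $\{P_-,P_+\}$, so the invariant you name does not isolate two points. More seriously, a conjugating $h$ need not preserve the core arc $I$, so there is no ``induced one-dimensional conjugation'' to feed back into Proposition~\ref{prop:redconj}. The paper argues directly in dimension $d$: because the suspended $g_\alpha$ has dense support, the interior fixed-point set is exactly a $\Z$-indexed family of separating hypersurfaces $P_i$, the complementary regions $U_i$ are permuted by any conjugator according to some $\sigma\in D_\infty$, and the sign relation is read off from which wall the forward $g_\alpha$-orbits in $U_i$ accumulate on. Your construction can be salvaged along the same lines by analysing the connected components of the non-fixed set and their adjacencies, but that is not the argument you wrote.
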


\begin{proof}
Recall that, for a topological space $X$, $\Sigma X$ denotes the suspension over $X$, that is, the product $X\times [0,1]$ with $X\times \{1\} $ and $X\times \{0\}$ collapsed to single points $p^+$ and $p^-$ respectively. Observe that every homeomorphism of $X$ extends canonically to a homeomorphism of the suspension $\Sigma X$.

 We let $\Sigma^{d-1}[0,1]$ denote the $d-1$-fold suspension
    $$
    \Sigma^{d-1}[0,1]= \underbrace{\Sigma(\cdots \Sigma(\Sigma}_{d-1 \text{ times }} [0,1])\cdots)\cong B^d(1).
    $$
So, by induction, the $f_\alpha\in \Homeo_\partial([0,1])$ constructed in the proof of Proposition \ref{prop:redconj} extend to homeomorphisms $g_\alpha\in \Homeo_\partial\big(\Sigma^{d-1}[0,1]\big)$ so that
$$
\overline{\mgd{x\in \Sigma^{d-1}[0,1]}{g_\alpha(x)\neq x}}=\Sigma^{d-1}[0,1].
$$ 

We also note that there is an orientation-reversing $k\in \Homeo\big(\Sigma^{d-1}[0,1]\big)$ such that $g_\alpha k=kg_\alpha$ for all $\alpha$. Indeed, as $d\geqslant 2$, we may let $\kappa$ be the orientation-reversing homeomorphism of $\Sigma [0,1]$ defined by $\kappa(r,t)=(r,1-t)$ and then simply let $k$ be the induced homeomorphism of $\Sigma^{d-1}[0,1]$. It thus follows that if $h\in \Homeo\big(\Sigma^{d-1}[0,1]\big)$ is orientation-reversing and $g_\beta h=hg_\alpha$, then also $g_\beta kh=khg_\alpha$ and $kh\in \Homeo_0\big(\Sigma^{d-1}[0,1]\big)$. This verifies the last equivalence of the proposition.

Suppose now that $\sigma\cdot \alpha=\beta$ for some $\sigma\in D_\infty$ and $\alpha,\beta\in \{-1,1\}^\Z$. Then $f_\beta h=hf_\alpha$ for some $h\in \Homeo([0,1])$ and, if $\tilde h$ denotes the induced homeomorphism $\tilde h\in \Homeo\big(\Sigma^{d-1}[0,1]\big)$, then $g_\beta \tilde h=\tilde hg_{\alpha}$.

For the converse, note first that as $\Sigma^{d-1}[0,1]\cong B^d(1)$ we may talk about its interior, which can be seen to be a union of $\Z$--indexed regions $\{U_i\}_{i\in\Z}$, each of which is invariant under all $g_{\alpha}$. The interior of each $U_i$ is itself homeo\-morphic to $\R^d$, whereas two regions $U_{i-1}$ and $U_{i}$ meet along a copy $P_{i}$ of $\R^{d-1}$ that is pointwise fixed by all $g_\alpha$. Furthermore, 
 by appropriately choosing the homeomorphisms ${\sf int} (U_i)\cong \R^{d}$,  up to conjugacy, all $g_{\alpha}$ act on each such copy of $\R^d$ by translation either to the left or to the right with respect to the coordinate axis $x_1$ according to whether  $\alpha(i)={-1}$ or $\alpha(i)=1$. In particular, $\bigcup_{i\in \Z}P_i$ is exactly the collection of points in the interior of $\Sigma^{d-1}[0,1]$ fixed by the $g_\alpha$.

\begin{center}
 \begin{tikzpicture}[scale=2.5]

\draw[line width=1.2pt] (0,0) circle (1);

\begin{scope}
    \fill[gray!30]
        plot[domain=-90:90,variable=\phi] 
            ({cos(\phi)*sin(-10)},  {sin(\phi)})
        --
        plot[domain=90:-90,variable=\phi] 
            ({cos(\phi)*sin(10)}, {sin(\phi)})
        -- cycle;

    \fill[gray!15]
        plot[domain=-90:90,variable=\phi] 
            ({cos(\phi)*sin(10)},  {sin(\phi)})
        --
        plot[domain=90:-90,variable=\phi] 
            ({cos(\phi)*sin(28)}, {sin(\phi)})
        -- cycle;

    \fill[gray!15]
        plot[domain=-90:90,variable=\phi] 
            ({cos(\phi)*sin(-28)},  {sin(\phi)})
        --
        plot[domain=90:-90,variable=\phi] 
            ({cos(\phi)*sin(-10)}, {sin(\phi)})
        -- cycle;
\end{scope}

\foreach \lambda in {-73,-61,-45,-28,-10,10,28,45,61,73}{
    \draw[smooth,domain=-90:90,variable=\phi]
        plot ({cos(\phi)*sin(\lambda)}, {sin(\phi)});
}

\foreach \y in {-0.75,-0.50,-0.25,0,0.25,0.50,0.75}{
    \draw (-{sqrt(1-\y*\y)},\y) -- ({sqrt(1-\y*\y)},\y);
}

\fill (0,1) circle (0.03);
\fill (0,-1) circle (0.03);
\node[above] at (0,1) {$p^+$};
\node[below] at (0,-1) {$p^-$};

\node at (0.04,0.12) {$U_1$};
\node at (0.32,0.12) {$U_2$};
\node at (-0.32,0.12) {$U_0$};

\node at (-0.36,0.62) {$P_0$};
\node at (-0.12,0.62) {$P_1$};
\node at (0.15,0.62) {$P_2$};
\node at (0.37,0.62) {$P_3$};

\draw[->, thick] (-1.2,0) -- (1.2,0) node[right] {$x_1$};

\draw[->] (-.1,-.3) -- (.1,-.3);
\draw[->] (-.1,-.4) -- (.1,-.4);
\draw[->] (-.1,-.6) -- (.1,-.6);
\draw[->] (-.09,-.7) -- (.09,-.7);

\draw[<-] (-.36,-.3) -- (-.22,-.3);
\draw[<-] (-.35,-.4) -- (-.22,-.4);
\draw[<-] (-.32,-.6) -- (-.19,-.6);
\draw[<-] (-.3,-.7) -- (-.18,-.7);

\draw[->] (.2,-.3) -- (.4,-.3);
\draw[->] (.2,-.4) -- (.39,-.4);
\draw[->] (.19,-.6) -- (.33,-.6);
\draw[->] (.18,-.7) -- (.31,-.7);

\end{tikzpicture}

\end{center}
 
Observe that, for all $\alpha\in\{-1,1\}^\Z$,  $i\in \Z$, and $x\in {\sf int}(U_i)$,
\[
\begin{split}
    \alpha_i=-1 &\quad\Leftrightarrow\quad \lim_{n\to \infty}g_{\alpha}^n(x)\in P_{i}\\
    \alpha_i=1 &\quad\Leftrightarrow\quad \lim_{n\to \infty}g_{\alpha}^n(x)\in P_{i+1}.
\end{split}
\]

Suppose now that $g_\beta h=hg_\alpha$ for some $h\in \Homeo\big(\Sigma^{d-1}[0,1]\big)$. Then $h$ preserves the interior of $\Sigma^{d-1}[0,1]$ and must therefore map $\bigcup_{i\in \Z}P_i$ to itself. It follows that there is some $\sigma\in D_\infty$ so that $h[U_i]=U_{\sigma(i)}$ for all $i\in \Z$. On the other hand, since $P_{i}=U_{i-1}\cap U_{i}$, we have
$$
h[P_i]=h[U_{i-1}]\cap h[U_{i}]=U_{\sigma(i-1)}\cap U_{\sigma(i)}=
\begin{cases}
P_{\sigma(i)}&\text{if $\sigma$ is order-preserving},\\
P_{\sigma(i)+1}&\text{if $\sigma$ is order-reversing}.
\end{cases}
$$

Suppose $\sigma$ is order-preserving. Then, for all $i$
\[\begin{split}
\alpha_i=-1 
\;\Leftrightarrow\;& \forall x \in {\sf int}(U_i)\; \lim_{n\to \infty}g_{\alpha}^n(x)\in P_{i}\\
\;\Leftrightarrow\;& \forall x \in {\sf int}(U_i)\; \lim_{n\to \infty}g_{\beta}^nh(x)=\lim_{n\to \infty}hg_{\alpha}^n(x)=h\big(\lim_{n\to \infty}g_{\alpha}^n(x)\big)\\
&\hspace{7cm}\in h[P_{i}]=P_{\sigma(i)}\\
\;\Leftrightarrow\;& \forall y \in {\sf int}(U_{\sigma(i)})\; \lim_{n\to \infty}g_{\beta}^n(y)\in P_{\sigma(i)}\\
\;\Leftrightarrow\;& \beta_{\sigma(i)}=-1.
\end{split}\]
On the other hand, if $\sigma$ is order-reversing
\[\begin{split}
\alpha_i=-1 
\;\Leftrightarrow\;& \forall x \in {\sf int}(U_i)\; \lim_{n\to \infty}g_{\alpha}^n(x)\in P_{i}\\
\;\Leftrightarrow\;& \forall x \in {\sf int}(U_i)\; \lim_{n\to \infty}g_{\beta}^nh(x)=\lim_{n\to \infty}hg_{\alpha}^n(x)=h\big(\lim_{n\to \infty}g_{\alpha}^n(x)\big)\\
&\hspace{7cm}\in h[P_{i}]=P_{\sigma(i)+1}\\
\;\Leftrightarrow\;& \forall y \in {\sf int}(U_{\sigma(i)})\; \lim_{n\to \infty}g_{\beta}^n(y)\in P_{\sigma(i)+1}\\
\;\Leftrightarrow\;& \beta_{\sigma(i)}=1.
\end{split}\]
Since this holds for all $i\in \Z$ we find that $\beta=\sigma\cdot \alpha$.
\end{proof}

We can now prove the failure of type rigidity under $\mathsf{PBP}$.

\begin{thm}
Assume $\mathsf{PBP}$.
    For every manifold $M$ of dimension $d\geqslant 1$, there are $g_1,g_2\in \Homeo_0(M)$  that are non-conjugate in $\Homeo(M)$, whereas
    $$
    \tp(g_1)=
    \tp(g_2).
    $$
\end{thm}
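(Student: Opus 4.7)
The plan mirrors the proof of Theorem~\ref{thm:proj-det-non-class}: parameterize a Borel family of homeomorphisms of $M$ by $\alpha \in \{-1,1\}^\Z$ in such a way that $\Homeo(M)$-conjugacy corresponds to the $D_\infty$-action on $\{-1,1\}^\Z$, then use PD together with mixing of the shift to produce pairs in distinct orbits that satisfy exactly the same first-order formulas. Propositions~\ref{prop:redconj} and~\ref{prop:redconj2} already furnish the necessary conjugacy reduction on an interval or a ball; only the transport of their output into $M$ and the measure-theoretic bookkeeping remain.

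First, choose a tame closed ball $B \subset M$ (or a closed arc if $d=1$) contained in a coordinate chart. For each $\alpha \in \{-1,1\}^\Z$ let $g_\alpha$ be the homeomorphism of $B^d(1)$ (resp.\ $[0,1]$) provided by Proposition~\ref{prop:redconj2} (resp.\ Proposition~\ref{prop:redconj}), transport $g_\alpha$ to $B$ via the chart, and extend by the identity on $M \setminus B$ to obtain $h_\alpha \in \Homeo(M)$. Since $h_\alpha$ is compactly supported inside a ball, $h_\alpha \in \Homeo_0(M)$, and the map $\alpha \mapsto h_\alpha$ is continuous. The core reduction is that $h_\alpha$ and $h_\beta$ are conjugate in $\Homeo(M)$ if and only if $\sigma \cdot \alpha = \beta$ for some $\sigma \in D_\infty$. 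For the forward direction, any conjugator $\phi \in \Homeo(M)$ must map $\overline{\supp h_\alpha} = B$ onto $\overline{\supp h_\beta} = B$, so $\phi|_B$ conjugates $g_\alpha$ to $g_\beta$ in $\Homeo(B)$, and Proposition~\ref{prop:redconj2} (or~\ref{prop:redconj}) produces $\sigma$. For the reverse direction, after arranging by a preliminary conjugation in the chart that the supports of $g_\alpha, g_\beta$ both lie in a concentric sub-ball $B^d(1/2)$, an orientation-preserving ball-conjugator $h \in \Homeo_0(B^d(1))$ supplied by the propositions can be modified inside the collar $B^d(1) \setminus B^d(1/2)$, using its isotopy to the identity, to become the identity on $\partial B^d(1)$; extending by the identity then yields the required element of $\Homeo(M)$.

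With the reduction in place, for each first-order $\mathcal L$-formula $\psi(v)$ set $A_\psi = \{\alpha \in \{-1,1\}^\Z : \Homeo(M) \models \psi(h_\alpha)\}$. Imitating the proof of Proposition~\ref{prop:proj} --- realize $M$ as a $Z$-subset of the Hilbert cube $Q$, view $\Homeo(M)$ as a continuous quotient of the stabilizer of $M$ inside $\Homeo(Q)$, put $\psi$ in prenex normal form, and interpret each atomic equation as a pointwise equation of continuous functions on $Q$ --- one checks that $A_\psi$ is projective. By the reduction, $A_\psi$ is $D_\infty$-invariant and in particular $\Z$-invariant. Under PD, $A_\psi$ is Lebesgue measurable for the $\{\tfrac12,\tfrac12\}$-Bernoulli measure on $\{-1,1\}^\Z$; since the shift is mixing, $A_\psi$ is null or conull. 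Put $C_\psi$ equal to whichever of $A_\psi$ or its complement is conull, and set $C = \bigcap_\psi C_\psi$, a conull set. Because $D_\infty$ is countable, every $D_\infty$-orbit is null, so $C$ contains elements from at least two distinct $D_\infty$-orbits; pick any such $\alpha, \beta \in C$ and set $g_1 = h_\alpha$, $g_2 = h_\beta$. They are non-conjugate by the reduction but share all first-order formulas by construction, so $\tp(g_1) = \tp(g_2)$.

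The principal obstacle is the boundary-modification step that converts the ball-conjugator from Proposition~\ref{prop:redconj2} into a homeomorphism of $M$: the second equivalence in that proposition --- guaranteeing an orientation-preserving $h \in \Homeo_0(B^d(1))$ --- is crucial, as only then does the isotopy to the identity permit the collar adjustment without disturbing the conjugation equation on the supports of $g_\alpha$ and $g_\beta$. A secondary technical point is verifying that the projectivity argument of Proposition~\ref{prop:proj} goes through with $M$ (possibly non-compact or with boundary) in place of $S_F$; this should follow by the same Hilbert-cube embedding after replacing the $K(2^\N)$-parameter with the continuous parameter $\alpha \mapsto h_\alpha$.
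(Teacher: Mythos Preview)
Your argument is correct and matches the paper's approach. The paper streamlines your collar step by embedding $B^d(2)$ into $M$ from the outset and placing each $g_\alpha\in\Homeo_\partial\big(B^d(1)\big)$ on the inner ball; the conjugator $h\in\Homeo_0\big(B^d(1)\big)$ supplied by Proposition~\ref{prop:redconj2} is then extended across the annulus $B^d(2)\setminus B^d(1)$ by choosing a path from $h|_{S^{d-1}}$ to the identity in $\Homeo_0(S^{d-1})$, which replaces your ``preliminary conjugation'' by a fixed choice of collar made in advance. For the projectivity of $A_\psi$ the paper simply asserts that $\{f\in\Homeo(M):\Homeo(M)\models\psi(f)\}$ is projective---implicitly using that $\Homeo(M)$ is a Polish group, so atomic equations cut out closed sets and quantifiers give projections---so your Hilbert-cube detour, while valid, is more than is needed here.
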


\begin{proof}
Recall our notation $S^{d-1}=\partial \big(B^d(1)\big)$.
Suppose $h\in \Homeo_0\big(B^d(1)\big)$, whereby 
$$
h\upharpoonright_{S^{d-1}}\in \Homeo_0\big(S^{d-1}\big).
$$
Since $\Homeo_0(S^{d-1})$ is path connected, we may find a continuous path 
$$
(h_t)_{t\in [1,2]}\in \Homeo_0\big(S^{d-1}\big)
$$
beginning at $h_1=h\upharpoonright_{S^{d-1}}$ and so that $h_2={\sf id}_{S^{d-1}}$. We may therefore define a homeomorphism $\tilde h\in \Homeo_\partial\big(B^d(2)\big)$ that extends $h$ by letting
$$
\tilde h(x)=
\begin{cases}
h(x)&\text{if }\|x\|\leqslant 1,\\
rh_r\big(\tfrac x{r}\big)&\text{if }1<\|x\|=r\leqslant 2.
\end{cases}
$$

Fix now a homeomorphic embedding of $B^d(2)$ into $M$ and identify $B^d(2)$ with its image. Then every element of $\Homeo_\partial\big(B^d(2)\big)$ extends to an element of $\Homeo(M)$ by setting it to be the identity on $M\setminus B^d(2)$.
By the argument above, we thus see that every $h\in \Homeo_0\big(B^d(1)\big)$ canonically extends to a full homeomorphism of $M$, which is the identity on $M\setminus B^d(2)$. Similarly, every $g\in \Homeo_\partial\big(B^d(1)\big)$ extends to all of $M$ by setting it to be the identity on $M\setminus B^d(1)$.

Assume first $d\geqslant 2$. Then, by combining the above discussion with Proposition \ref{prop:redconj2}, we obtain a continuous map
$$
\{-1,1\}^\Z\overset g\longrightarrow \Homeo_0(M)
$$
such that 
\[\begin{split}
    \exists \sigma \in D_\infty \;\big(\sigma\!\cdot\! \alpha=\beta\big)
    &\quad\Leftrightarrow\quad
    \exists h\in \Homeo(M)\;\big( hg_\alpha h^{-1}=g_\beta\big).
\end{split}\]

Observe that, for every first-order formula $\phi(x)$ of the language of group theory, the set
$$
\Mgd{f\in \Homeo_0(M)}{\Homeo(M)\models \phi(f)}
$$
is projective and invariant under conjugacy by $\Homeo(M)$. The reason for this is that $\Homeo(M)$ can be coded as a closed subset of a Polish function space. Equality, multiplication, and inversion become Borel in that code. Satisfaction of a first-order formula is then projective by induction on quantifier complexity, with quantifiers moving the solution set up at most one level in the projective hierarchy.

It follows that
$$
A_\phi=\Mgd{\alpha\in \{-1,1\}^\Z}{\Homeo(M)\models \phi(g_{\alpha})}
$$
is a $D_\infty$-invariant projective set. By $\mathsf{PBP}$, it has the Baire property; since it is in particular shift-invariant, the category zero-one law implies that it is either meager or comeager. Let $C_\phi=A_\phi$ if $A_\phi$ is comeager and let $C_\phi=\{-1,1\}^\Z\setminus A_\phi$ otherwise. Then
$$
C=\bigcap_{\phi}C_\phi,
$$
where the intersection runs over the countable collection of all first-order formulae $\phi(x)$ of the language of group theory, is comeager in $\{-1,1\}^\Z$. Furthermore, for all $\alpha,\beta\in C$, we have
$$
\tp(g_\alpha)=\tp(g_\beta).
$$
Since every $D_\infty$-orbit is countable and $C$ is comeager, it suffices to pick $\alpha,\beta\in C$ that are $D_\infty$-orbit inequivalent and hence such that $g_\alpha$ and $g_\beta$ are non-conjugate in $\Homeo(M)$.

The argument for the case $d=1$ is similar and uses Proposition \ref{prop:redconj} in place of \ref{prop:redconj2}.
\end{proof}


\section{Consistency of non-classification over ZFC alone}

Shelah proved that every model of ZFC has a forcing extension satisfying $\mathsf{PBP}$~\cite{Shelah1984}, so Theorem~\ref{thm:pd-notrigid} already gives Theorem~\ref{thm:consistent-intro}. We nevertheless include a direct forcing argument, which shows explicitly that the conclusion of Theorem~\ref{thm:proj-det-non-class} has no consistency strength beyond that of ZFC. In particular, we show that the consistency of ZFC implies the consistency of ZFC + ``for every $d\geqslant 2$, there are non-homeomorphic orientable $d$--dimensional manifolds with elementarily equivalent homeomorphism groups'' by forcing to collapse $\mathfrak{c}^+$ to be $\omega$ (where here $\mathfrak{c}=|\mathbb R|$ is the cardinality of the continuum). In this section, we will use standard arguments from forcing; see~\cite{hinman-book,Krivine,weaver-book} for relevant background.

Fix a countable transitive
model $V$ of ZFC and let $\kappa=\mathfrak{c}^+$ as computed in $V$. Let also $\col(\omega,\kappa)$ be the partial order consisting of all partial functions 
$$
p\colon \omega\rightharpoonup \kappa
$$
with finite domain and in which $p\leqslant q$ when $p$ extends $q$ as a function. Thus $p\leqslant \emptyset$ where $\emptyset$ denotes the function with empty domain. Elements of $\col(\omega,\kappa)$ are called {\em forcing conditions}. For all $m<\omega$ and $\alpha<\kappa$, let 
$$
D_{m,\alpha}=\mgd{p\in \col(\omega,\kappa)}{m\in {\sf dom}(p)\;\&\; \alpha\in {\sf im}(p)}
$$
and note that $D_{m,\alpha}$ is {\em dense} in $\col(\omega,\kappa)$, meaning that each $q$ has a minorant $p\leqslant q$ in $D_{m,\alpha}$. A {\em generic extension} $V[G]$ of $V$ is the smallest transitive model of ZFC containing the submodel $V$ such that $G\in V[G]$, where $G$ is some {\em $\col(\omega,\kappa)$-generic filter}. The latter means that $G\subseteq \col(\omega,\kappa)$ is an upwards closed subset such that any two $p,q\in G$ have a common minorant $r\in G$ and furthermore such that $G$ meets every dense subset of $\col(\omega,\kappa)$. In particular, in $V[G]$, we obtain a surjection
$$
g=\bigcup G \colon \omega\longrightarrow\kappa
$$
and therefore  $|\kappa|=|\omega|=\aleph_0$ in $V[G]$. It follows that every ordinal $\alpha<\kappa$ is countable in $V[G]$.

We briefly recall a few facts about the forcing relation $\Vdash_{\mathrm{Col}(\omega,\kappa)}$. For  a parameter-free formula $\chi(x_1,\ldots,x_m)$, elements $a_1,\ldots, a_m\in V$ and a forcing condition $p\in \col(\omega,\kappa)$, we have
$$
p\Vdash_{\mathrm{Col}(\omega,\kappa)}\chi(\check{a_1},\ldots,\check{a_m}) \quad\Leftrightarrow\quad \forall G\ni p\quad V[G]\models \chi({a_1},\ldots,{a_m}).
$$
Similarly, for a fixed generic filter $G$, we have
$$
V[G]\models \chi({a_1},\ldots,{a_m}) \quad\Leftrightarrow\quad \exists q\in G\quad q\Vdash_{\mathrm{Col}(\omega,\kappa)}\chi(\check{a_1},\ldots,\check{a_m}).
$$

The partial order $\col(\omega,\kappa)$ is easily seen to be {\em homogeneous}, meaning that, for every two elements $p,q\in \col(\omega,\kappa)$, there is an automorphism $\theta$ of $\col(\omega,\kappa)$ so that $\theta(p)$ and $q$ have a common minorant. We also note that, if $\chi(x_1,\ldots,x_m)$ is parameter-free and $a_1,\ldots, a_m\in V$, then 
$$
p\Vdash_{\mathrm{Col}(\omega,\kappa)} \chi(\check{a_1},\ldots,\check{a_m}) \;\;\Leftrightarrow\;\;
\theta(p)\Vdash_{\mathrm{Col}(\omega,\kappa)} \chi(\check{a_1},\ldots,\check{a_m})
$$
\cite[pp. 155-157]{Krivine}.  It thus follows from homogeneity that
the empty condition forces every formula or its negation, i.e.~either
$$
\emptyset\Vdash_{\mathrm{Col}(\omega,\kappa)} \chi(\check{a_1},\ldots,\check{a_m}) $$
or
$$
\emptyset\Vdash_{\mathrm{Col}(\omega,\kappa)} \neg\chi(\check{a_1},\ldots,\check{a_m}).
$$

\begin{lem}\label{lem:homogeneous-function}
Suppose that $\phi(x,y)$ is a parameter-free formula that defines a function from $\omega_1$ to $2^{\omega}$. Then there is a formula  $\psi(x,y)$ with unique parameter $\kappa$ that defines a function from $\kappa$ to $2^{\omega}$ such that, for any
$\alpha < \kappa$, 
\[\emptyset \Vdash_{\mathrm{Col}(\omega,\kappa)} \phi(\check{\alpha},\check{r})\quad \textrm{if and only if}\quad  \psi(\alpha,r).\]
\end{lem}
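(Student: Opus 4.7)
The plan is for $\psi$ to define the restriction to $\kappa$ of the generic function picked out by $\phi$ in $V[G]$. Since $\col(\omega,\kappa)$ collapses $\kappa$ to $\omega$, every $\alpha<\kappa$ lies below $\omega_1^{V[G]}$, so by hypothesis $\phi(\alpha,\cdot)$ has a unique witness $s_\alpha\in 2^\omega$ in $V[G]$. The content of the lemma is that $s_\alpha$ is visible inside $V$, uniformly in $\alpha$, using only $\kappa$ as a parameter. Two standard facts about the forcing $\col(\omega,\kappa)$ do all the work: the forcing relation $p\Vdash_{\col(\omega,\kappa)}\chi$ for parameter-free $\chi$ with $V$-parameters is $V$-definable from $\col(\omega,\kappa)$; and, by the homogeneity of $\col(\omega,\kappa)$ recalled just before the statement, for every such $\chi$ the empty condition decides $\chi$ one way or the other.

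Concretely, I would set
\[
\psi(\alpha,r)\;\equiv\;\alpha<\kappa\,\wedge\, r\in 2^\omega\,\wedge\,\forall n\in\omega\,\big(r(n)=1\leftrightarrow \emptyset\Vdash_{\col(\omega,\kappa)}\exists s\in 2^\omega\,(\phi(\check\alpha,s)\wedge s(\check n)=1)\big).
\]
By homogeneity, for each $\alpha<\kappa$ and each $n$ the embedded forcing statement has a definite truth value in $V$, so the right-hand side determines $r$ bit-by-bit; thus $\psi$ defines a total function $\kappa\to 2^\omega$ in $V$, with $\kappa$ as its unique parameter.

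To verify the equivalence with $\emptyset\Vdash\phi(\check\alpha,\check r)$, the argument in both directions is a bit-by-bit comparison with $s_\alpha$. If $\emptyset\Vdash\phi(\check\alpha,\check r)$, then in every generic $V[G]$ the uniqueness of the witness forces $s_\alpha=r$; hence for each $n$, $r(n)=1$ iff $V[G]\models\exists s\,(\phi(\alpha,s)\wedge s(n)=1)$, iff, by homogeneity, $\emptyset$ forces this statement, which is exactly the clause of $\psi$. Conversely, if $\psi(\alpha,r)$ holds in $V$, the same equivalences read backwards show that for any generic $G$, $s_\alpha(n)=r(n)$ for every $n$, so $s_\alpha=r$ and $V[G]\models\phi(\alpha,r)$; since this holds for every $G$, $\emptyset\Vdash\phi(\check\alpha,\check r)$.

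The only real obstacle is conceptual bookkeeping: we must verify that the $V[G]$-witness $s_\alpha$ for $\alpha<\kappa$ actually lies in $V$ and coincides with the function extracted by $\psi$. Both facts drop out together from the bit-by-bit comparison, so no machinery beyond the standard forcing facts recalled just before the statement is required.
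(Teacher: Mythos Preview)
Your proof is correct and follows essentially the same approach as the paper's: both arguments use definability of the forcing relation (with parameter $\kappa$) together with homogeneity of $\col(\omega,\kappa)$ to extract the witness $r$ bit-by-bit in $V$. The only organizational difference is that the paper takes $\psi(\alpha,r)$ to be literally the relation $\emptyset\Vdash_{\col(\omega,\kappa)}\phi(\check\alpha,\check r)$ and then constructs the witnessing $r$ via bit extraction, whereas you build $\psi$ directly from the bit extraction and then verify it agrees with the forcing condition; these are two presentations of the same idea.
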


\begin{proof}
    The relation $\emptyset \Vdash_{\mathrm{Col}(\omega,\kappa)} \phi(\check{\alpha},\check{r})$ is definable in the parameter $\kappa$ by the uniform definability of $\mathrm{Col}(\omega,\kappa)$ and the definability of forcing, so let $\psi(x,y)$ be a formula with unique parameter $\kappa$ defining this relation. It is immediate that for any  $\alpha < \kappa$ there is at most one $r \in 2^{\omega}$ such that $\psi(\alpha,r)$, so all we need to verify is that such an $r$ exists for any such $\alpha<\kappa$.

    So fix $\alpha<\kappa$. Since $\mathrm{Col}(\omega,\kappa)$ is  homogeneous and  $\phi$ has no parameters, we have that for each $n \in \omega$, either 
    \[\emptyset \Vdash_{\mathrm{Col}(\omega,\kappa)} \forall y (\phi(\check{\alpha},y) \to \check{n} \in y),\] 
    or 
    \[\emptyset \Vdash_{\mathrm{Col}(\omega,\kappa)} \exists y (\phi(\check{\alpha},y) \wedge \check{n} \notin y).\] 
    Let 
    $$
    r = 
    \Mgd{ n \in \omega }
    { \emptyset \Vdash_{\mathrm{Col}(\omega,\kappa)} \forall y (\phi(\check{\alpha},y) \to \check{n} \in y)}
    $$
    Since ZFC proves that $\phi$ defines a function, it is immediate that 
    $$
    \emptyset\Vdash_{\mathrm{Col}(\omega,\kappa)} \phi(\check{\alpha},\check{r}).
    $$
    Therefore $\psi(\alpha,r)$ holds.  
\end{proof}

Observe that if $\phi(x,y)$ is a formula as in Lemma~\ref{lem:homogeneous-function}, then, as $\kappa>\mathfrak c$, the function defined by $\psi$ cannot be injective and therefore there are $\alpha<\beta<\kappa$ and $r\in 2^{\omega}$ so that
$$
\emptyset \Vdash_{\mathrm{Col}(\omega,\kappa)} \phi(\check{\alpha},\check{r}) \; \wedge\; \phi(\check{\beta},\check{r}),
$$
whereby
$$
V[G]\models  \phi(\alpha,r) \; \wedge\; \phi(\beta,r)
$$
for any choice of generic filter $G$.
Recall however that, in $V[G]$, both $\alpha$ and $\beta$ are countable ordinals.

The connection between ordinals and first-order theories is
captured by the following result of Mazurkiewicz and Sierpi\'nski. For this, if $d\geqslant 2$ and $\alpha$ is a countable ordinal, we let 
$$
M^d_{\alpha}=S^d\setminus F
$$
where $F$ is some closed subset of $S^d$ homeomorphic with the countable compact space $\omega^{\alpha}+1$. Note that, up to homeomorphism, $M^d_{\alpha}$ is independent of the specific choice of $F$.

\begin{thm}[Mazurkiewicz--Sierpi\'nski]\label{thm:cb-ms}
For all $d\geqslant 2$ and all countable ordinals $\alpha\neq \beta$, the spaces $M^d_{\alpha}$ and $M^d_{\beta}$ are not homeomorphic.
\end{thm}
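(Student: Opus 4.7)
The plan is to distinguish $M^d_\alpha$ and $M^d_\beta$ by recovering the ordinal $\alpha$ from purely topological data, namely the end-space of $M^d_\alpha$ together with its Cantor–Bendixson hierarchy. Any homeomorphism $M^d_\alpha\to M^d_\beta$ induces a homeomorphism between end-spaces, so it is enough to show that the end-space of $M^d_\alpha$ is canonically homeomorphic to $F\cong \omega^\alpha+1$, and that the spaces $\omega^\alpha+1$ are pairwise non-homeomorphic for distinct countable $\alpha$.

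First I would establish the identification of ends. Since $F\cong \omega^\alpha+1$ is countable compact Hausdorff, it is totally disconnected. For $d\geqslant 2$ and any totally disconnected compact $F\subseteq S^d$, the complement $M=S^d\setminus F$ is a connected, locally connected, locally compact, $\sigma$-compact Hausdorff space, and small open balls in $S^d$ around any $p\in F$ remain connected after removing $F$ (removing a totally disconnected set from an open ball of dimension $\geqslant 2$ does not disconnect it). Thus each $p\in F$ determines an end of $M$ via the filter of traces $U\cap M$ of neighborhoods $U\ni p$ in $S^d$, and distinct points of $F$ determine distinct ends because $F$ is Hausdorff. Conversely, if $(\hat U_n)$ represents an end then $\bigcap_n \overline{\hat U_n}$ is a nonempty nested intersection of compact connected subsets of $S^d$ contained eventually in any neighborhood of $F$, hence is a connected subset of $F$, hence a single point (as $F$ is totally disconnected). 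This gives a canonical homeomorphism $\mathcal E(M^d_\alpha)\cong F\cong \omega^\alpha+1$, and since ends are intrinsically defined from the topology, any homeomorphism $M^d_\alpha\to M^d_\beta$ yields $\omega^\alpha+1\cong \omega^\beta+1$.

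Next I would complete the argument via Cantor–Bendixson rank. For a compact Hausdorff space $X$, let $X^{(0)}=X$, $X^{(\gamma+1)}=(X^{(\gamma)})'$ (the set of limit points), and $X^{(\lambda)}=\bigcap_{\gamma<\lambda}X^{(\gamma)}$ at limits $\lambda$. The CB rank of $X$ is the least $\gamma$ with $X^{(\gamma+1)}=X^{(\gamma)}$. A straightforward transfinite induction on $\alpha$ shows that $(\omega^\alpha+1)^{(\alpha)}=\{\omega^\alpha\}$ and $(\omega^\alpha+1)^{(\alpha+1)}=\varnothing$, so the CB rank of $\omega^\alpha+1$ is exactly $\alpha$. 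Since CB rank is a homeomorphism invariant, $\omega^\alpha+1\cong \omega^\beta+1$ forces $\alpha=\beta$, completing the proof.

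The main obstacle is verifying the identification $\mathcal E(M^d_\alpha)\cong F$ cleanly; specifically, ruling out ``exotic'' ends that are not tied to a single point of $F$. This is where the hypothesis $d\geqslant 2$ and the total disconnectedness of $F$ are both essential: in dimension $1$, removing even a two-point set from $S^1$ gives two ends per point (and the present statement would fail), whereas in dimension $\geqslant 2$, codimension-two arguments guarantee that punctured neighborhoods of points of $F$ remain connected, so no two points of $F$ get glued into the same end and no end sits ``between'' points. Once this local connectedness fact is in hand, the rest is standard CB bookkeeping.
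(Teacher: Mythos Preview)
The paper does not prove this theorem; it is stated as a classical result attributed to Mazurkiewicz and Sierpi\'nski and invoked without proof. Your proposal supplies the standard argument: identify the end-space of $M^d_\alpha=S^d\setminus F$ with $F\cong\omega^\alpha+1$ and then distinguish the ordinals via Cantor--Bendixson rank. Both steps are correct in substance, and this is exactly the route one takes to justify the citation.

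Two small points. First, your formula $(\omega^\alpha+1)^{(\alpha)}=\{\omega^\alpha\}$ fails at $\alpha=0$ (where the $0$th derivative is the whole two-point space), and by your own definition of CB rank the rank of $\omega^\alpha+1$ comes out to $\alpha+1$, not $\alpha$: you have $X^{(\alpha+1)}=\varnothing=X^{(\alpha+2)}$, so the least $\gamma$ with $X^{(\gamma+1)}=X^{(\gamma)}$ is $\alpha+1$. This is a harmless off-by-one; the invariant still separates distinct $\alpha$. Second, in your end-space argument the claim that $\bigcap_n\overline{\hat U_n}\subseteq F$ deserves one more sentence: any $p\in M$ lies in the interior of some $K_n$, hence outside $\overline{M\setminus K_n}\supseteq\overline{\hat U_n}$. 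With that, connectedness of the nested intersection and total disconnectedness of $F$ give a single point, as you say.
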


Fix a canonical enumeration $\{\sigma_n\}_{n<\omega}$ of all $\mL$-sentences. We let $\vartheta(\alpha,n)$ be a formula expressing that $\alpha$ is a countable ordinal  such that 
$$
\Homeo(S^{(n)_1}\setminus F)\models\sigma_{(n)_2}
$$ 
for any $F\subseteq S^{(n)_1}$ that is homeomorphic to $\omega^{\alpha}+1$. Here $(n)_1$ and $(n)_2$ refer to the first and second coordinates of the image of $n$ under some recursive bijection $\omega\longrightarrow \omega\times \omega$.

Using $\vartheta$, we may define a function 
$\Phi\colon \omega_1\longrightarrow 2^\omega$ by setting
$$
\Phi(\alpha)=\mgd{n<\omega}{\vartheta(\alpha,n)}.
$$
Observe that $\Phi$ is defined by a formula $\phi(x,y)$ without parameters. Therefore, by our previous discussion, there are ordinals $\alpha\neq \beta$ that are countable in $V[G]$ and some $r\in 2^\omega$ such that
$$
V[G]\models  \phi(\alpha,r) \; \wedge\; \phi(\beta,r).
$$
It thus follows that in $V[G]$, the following equivalence holds for all $n$
$$
\Homeo(M^{(n)_1}_\alpha)\models\sigma_{(n)_2}  \;\;\Leftrightarrow \;\;\Homeo(M^{(n)_1}_\beta)\models\sigma_{(n)_2}
$$
and so 
$$
\Homeo(M^{d}_\alpha)\equiv\Homeo(M^{d}_\beta)
$$
for all $d\geqslant 2$.

Summing up, by forcing with $\mathrm{Col}(\omega,\mathfrak{c}^+)$, in the generic extension $V[G]$ there is a pair of non-homeomorphic orientable $d$-dimensional manifolds with elementarily equivalent homeomorphism groups. This establishes
Theorem~\ref{thm:consistent-intro}.


\section*{Acknowledgements}
TK is partially supported by NSF grant DMS-2349814, and was partially supported by
Simons Foundation International Grant SFI-MPS-SFM-00005890. JdlNG is partially supported
by KIAS Individual Grant MG084001 at Korea Institute for Advanced Study,
and by the Samsung Science and Technology Foundation under Project Number SSTF-BA1301-51. CR is partially supported by NSF grant DMS-2246986.

\bibliographystyle{amsplain}
  \bibliography{ref}

\end{document}